\newtheorem{define}{Definition}[section]
\newtheorem{pro}{Proposition}[section]
\newtheorem{Lem}{Lemma}[section]
\newtheorem{cor}{Corollary}[section]
\newtheorem{remark}{Remark}[section]
\theoremstyle{plain} \newtheorem{thm}{Theorem}[section]
\DeclareMathOperator{\dist}{dist}
\DeclareMathOperator{\dom}{dom}
\DeclareMathOperator{\supp}{supp}
\DeclareMathOperator{\diam}{diam}
\newenvironment{abstracts}
 {\global\setbox\absbox=\vbox\bgroup
    \hsize=\textwidth
    \linespread{1}\selectfont}
 {\vspace{-\bigskipamount}\egroup}
\renewenvironment{abstract}[1][]
 {\if\relax\detokenize{#1}\relax\else\selectlanguage{#1}\fi
  \noindent\textbf{\abstractname}\par\medskip\noindent\ignorespaces}
 {\par\bigskip}
\begin{document}

\title{Combined effects in mixed local-nonlocal stationary problems}
\author[1]{Rakesh Arora\fnref{fn1}}
 \ead{arora.npde@gmail.com, arora@math.muni.cz}

\address[1]{Department of Mathematics and Statistics, Masaryk University, Building 08,
Kotl\'{a}\v{r}sk\'{a} 2, Brno, 611 37, Czech Republic}

\author[2,3]{Vicen\c tiu D. R\u adulescu}
 \ead{radulescu@inf.ucv.ro}

\address[2]{Faculty of Applied Mathematics, AGH University of Science and Technology, al. Mickiewicza
30, 30-059 Krak\'ow, Poland}
\address[3]{Department of Mathematics, University of Craiova, Street A.I. Cuza 13, 200585 Craiova, Romania}

\begin{abstracts}

\begin{abstract}
In this work, we study an elliptic problem involving an operator of mixed order with both local and nonlocal aspects, and in either the presence or the absence of a singular nonlinearity. We investigate existence or non-existence properties, power and exponential type Sobolev regularity results, and the boundary behavior of the weak solution, in the light of the interplay between the summability of the datum and the power exponent in singular nonlinearities. 
\end{abstract}

\begin{keyword}
{existence results \sep power and exponential type Sobolev regularity \sep local-nonlocal operator \sep boundary behavior \sep singular nonlinearity \sep Green's function estimates. 

\MSC[2010]{35A01 \sep 35R11 \sep 47G20 \sep 35S15 \sep 35B65 \sep 35J75.}
    }
\end{keyword}
\end{abstracts}


\maketitle

\tableofcontents

\section{Introduction}

In this article, we study the fine properties of the weak solution to an elliptic problem involving a mixed type operator $\mathcal{L}$, given by
\begin{equation}\label{mixed:oper}
    \mathcal{L}:= (-\Delta) + (-\Delta)^s \quad \text{for} \ 0<s<1. 
\end{equation}

Here, the word ``mixed'' refers to the type of the operator combining both local and nonlocal features, and to the differential order of the operator. The operator $\mathcal{L}$ is obtained by superposition of the classical Laplacian $(-\Delta)$ and the fractional Laplacian  $(-\Delta)^s$, for a fixed parameter $s \in (0,1)$, defined as
$$(-\Delta)^s u = C(N,s) P.V. \int_{\mathbb{R}^N} \frac{u(x)-u(y)}{|x-y|^{N+2s}} ~dy.$$
The term $``P.V."$ stands for Cauchy's principal value and $C(N,s)$ is a normalizing constant, whose explicit expression is given by
$$C(N,s)= \left(\int_{\mathbb{R}^N} \frac{1-cos(\xi_1)}{|\xi|^{N+2s}} d\xi\right)^{-1}.$$
The above choice of the constant $C(N,s)$ arises from the equivalent definition of $(-\Delta)^s$ to view it as a pseudo-differential operator of symbol $|\xi|^{2s}.$ Without going into the details of the appearance of such type of nonlocal operator in real-world phenomenons and motivation behind studying problems involving such nonlocal operators, we refer the reader to review the famous Hitchhiker's guide \cite{Nezza_palatucci_valdi} and reference within. 

The mixed operators of the form $\mathcal{L}$ in \eqref{mixed:oper} appears naturally in applied sciences, to study the role of the impact caused by a local and a non-local change in a physical phenomenon. More precisely, it can be understood through the following biological scenario where the population with density $u$ can possibly alternate both short and long-range random walks (namely, a classical random walk and a L\'evy flight), and this could be
driven, for example, by a superposition between local exploration of the environment and hunting strategies (for a through discussion see, \cite{KLS} for mixed dispersal movement strategy, \cite{MV_2017} for nonlocal diffusion strategy, \cite{CHL} for conditional dispersal strategy). These type of operators also arises in the models obtained from superposition of two different scaled stochastic processes. For a detailed presentation on this, we refer the reader to \cite{MRS}.

Very recently a great amount of attention has been paid in studying elliptic problems involving mixed type of operator having both local and nonlocal behaviours. Some questions related to structural results like existence, maximum principle and interior Sobolev and Lipschitz regularity \cite{Aba_cozzi, Biagi_Valdi, Biagi_Vecchi}, symmetry results \cite{Biagi_Valdi-2}, Faber-Krahn type inequality \cite{Biagi_Valdi-1}, Neumann problems \cite{MRS}, Green functions estimates \cite{Chenkim_Green1, Chenkim_Green2} has been answered. 

The study of elliptic or integral equations involving singular terms started in the early sixties by the works of Fulks and Maybee \cite{Fulks.al_1960}, originating from the models of steady-state temperature distribution in an electrically conducting medium.  On the one hand, the study of such types of equations is a challenging mathematical problem. On the other hand, they appear in a variety of real-world models. To demonstrate an application, let us consider $\Omega$ be an electrically conducting medium in $\mathbb{R}^3$ where the local voltage drop is described by the function $f$ and $u$ be the steady state temperature distribution in the region $\Omega$. Then, if $\sigma(u)$ is the electrical resistivity which is, in general, a function
of the temperature u, in particular $\sigma(u) =u^{\gamma}$, the rate of generation of heat at any point $x$ in the medium is $\frac{f(x)}{u^\gamma}$ and the temperature distribution in the conducting medium satisfies the local counterpart of the equation (see below \eqref{main:problem}). For interested readers, we refer to \cite{Nachman.al_1986,  Stuart_1974, Diaz.al_1987} for applications in the pseudo-plastic fluids, Chandrasekhar equations in radiative transfer, and in non-Newtonian fluid flows in porous media and heterogeneous catalysts. \vspace{0.2cm}\\
Motivating from the above discussion, we study the following mixed local/nonlocal elliptic problem in the presence of weight function $f$ and singular nonlinearities
\begin{equation}\label{main:problem}
    \mathcal{L} u = \frac{f(x)}{u^\gamma}, \quad u>0 \quad \text{ in }  \Omega,
\end{equation}
subject to the homogeneous Dirichlet boundary conditions
\begin{equation}\label{bdry:cond}
    u  = 0 \ \text{ in } \ \mathbb{R}^N \setminus \Omega,
\end{equation}
where $\Omega \subset \mathbb{R}^N$, $N \geq 2$, $\gamma \geq 0$. The function $f: \Omega \to \mathbb{R}^+$ either belongs to the Lebesgue class of functions $L^r(\Omega)$ for some $1 \leq r \leq \infty$ or has a growth of negative powers of distance function $\delta$ near the boundary {\it i.e.} $f(x) \sim \delta^{-\zeta}(x)$ for some $\zeta \geq 0$ and $x$ lies near the boundary $\partial \Omega$.

\subsection{Understanding the notion of a solution}
We start by understanding the meaning of a ``weak'' solution for the problem \eqref{main:problem}-\eqref{bdry:cond}. An elementary way to define the notion of a solution of the problem \eqref{main:problem} is given by: a function $u$ such that
\begin{enumerate}
    \item[(i)] $u>0$ a.e. in $\Omega$,
    \item[(ii)] the following weak formulation equality holds:
    \begin{equation}\label{main:weak:formula}
\begin{split}
    \int_{\Omega} \nabla u \cdot \nabla \psi ~dx + \frac{C(N,s)}{2} \int_{\mathbb{R}^N} \int_{\mathbb{R}^N} \frac{(u(x)- u(y))(\psi(x)- \psi(y))}{|x-y|^{N+2s}} ~dx ~dy = \int_{\Omega}  \frac{f(x)}{u^\gamma}\psi ~dx.
\end{split}
\end{equation}
for a class of test functions $\psi \in \mathcal{T}(\Omega)$ and a function $u$ ``regular'' enough so that all the integrals are well defined.
\end{enumerate}
The first condition $(i)$ is imposed to give a meaning to the term $u^{-\gamma}$ known as ``singular nonlinearities'' and the second condition $(ii)$ is motivated and obtained by multiplying a smooth function $\psi$ to the equation \eqref{main:problem} and using standard integration by parts formula for a smooth function $u$. Since solutions to equations involving a fractional Laplacian and singular nonlinearities generally are not of class $C^2$ therefore a
solution to \eqref{main:problem} has to be understood in the ``weak'' sense via \eqref{main:weak:formula}.

Assuming $\mathcal{T}(\Omega)$ to be a class of smooth test functions (for e.g. $C_c^\infty(\Omega)$), a $``$natural$"$ space to look for the solution $u$ of the problem \eqref{main:problem}-\eqref{bdry:cond}, more accurately, to define the integrals on the left hand side of \eqref{main:weak:formula}, is the following:
$$\mathbb{H}(\Omega):= \{u \in H^1(\mathbb{R}^N): \ u=0 \ \text{in}\ \mathbb{R}^N \setminus \Omega\}.$$

In the light of the boundary regularity of $\Omega$, it is well known that that the space $\mathbb{H}(\Omega)$ can be identified with $H_0^1(\Omega).$ Precisely, we understand the identification (see \cite[Proposition 9.18]{brezis}) in the following way: the function $u \in H_0^1(\Omega)$ as a zero extension of the function $\tilde{u}:= u \cdot {\bf 1}_{\Omega} \in \mathbb{H}(\Omega)$ and
$$ u \in  \mathbb{H}(\Omega) \Longrightarrow u|_\Omega \in H_0^1(\Omega).$$

In view of above identification and employing the classical embedding theorem \cite[Proposition 2.2]{Nezza_palatucci_valdi}, both the integrals on left hand side of \eqref{main:weak:formula} are well defined in $H_0^1(\Omega).$ Due to the fact that $\gamma \geq 0$, the nonlinearity $u^{-\gamma}$ in the our problem \eqref{main:problem}-\eqref{bdry:cond} may blow up near the boundary and this is the reason why we regard \eqref{main:problem} as an equation with ``singular nonlinearities''. By taking into account the singular nature of the nonlinearities and the regularity of the datum $f$, specifically, when the nature of the singularity is strong {\it i.e.} $\gamma \gg 1$ (see \cite{Lazer_Mckenna}) or $f \in L^r(\Omega)$ when $r$ is close to 1 (see \cite{Bocc_ors}), we cannot always expect our solution $u$ in $H_0^1(\Omega)$ and in place of that, either we have $u^\alpha \in H_0^1(\Omega)$ for some $\alpha \geq 1$ or $u \in W_0^{1,q}(\Omega)$ for some $q \in [1,2)$. For these reasons, as customary in the literature, we adopt the
following definition to understand the Dirichlet datum in a generalized sense (see \cite{Canino_et_al, Arora_et-al-1}):
\begin{define}\label{def:boundary}
A function $u \leq 0$ on $\partial \Omega$, if $u=0$ in $\mathbb{R}^N \setminus \Omega$ and for any $\epsilon >0$, we have 
$$(u-\epsilon)^+ \in H_0^1(\Omega).$$
We say that $u=0$ on $\partial \Omega$, if $u \geq 0$ and $u \leq 0$ on $\partial \Omega.$
\end{define}

Now, to give a meaning to the integral present on the right hand side of \eqref{weak:formula4}, we choose a suitable class of test functions $\mathcal{T}(\Omega)$ depending upon the value of the exponent $\gamma \geq 0$ and the regularity of the datum $f$ and the solution $u.$ Motivating from the above discussion, the exact notion of weak solution to our main problem \eqref{main:problem}-\eqref{bdry:cond} with different choices of a class of test functions and sufficient regularity of the solution $u$, is detailed in Section \ref{main:res} while stating the main results.

\subsection{Previous work}
One of the seminal breakthroughs in the study of singular nonlinearities was the work of Crandall, Rabinowitz, and Tartar \cite{Crandall.al_1977} which majorly setup this direction of research. Afterward, a large number of publications has been devoted to investigate a diverse spectrum of issues rotating around local/nonlocal elliptic equations involving the singular nonlinearities (see for example \cite{Diaz.al_1987, Fulks.al_1960, Stuart_1974} and monographs \cite{Ghergu.al_2008, Hernandez.al_2006}).  Let us recall some known results in the literature for both local and nonlocal elliptic equations with singular nonlinearities. \vspace{0.1cm}

In the local case, Crandall et al. \cite{Crandall.al_1977} studied the singular boundary value problem \eqref{main:problem}-\eqref{bdry:cond} with Laplace operator and $f=1$. By using the classical method of sub-supersolutions on the non-singular approximating problem, they proved the existence and uniqueness results of the classical solution of our original problem. In addition, by exploiting the second-order ODE techniques and localization near the boundary, the boundary behavior of the solution is deduced. By Stuart \cite{Stuart_1976}, similar results on the existence of solutions were obtained using, this time, an approximation argument with respect to the boundary condition. Actually, both papers \cite{Crandall.al_1977, Stuart_1976} provide results for more general differential operators with smooth coefficients, not necessarily in divergence form, and for non-monotone
nonlinearities as well. The same model of elliptic equations with singular nonlinearities and $f \in C^\alpha(\overline{\Omega})$, was considered by Lazer and McKenna \cite{Lazer_Mckenna} in which they simplified the proof of boundary behavior of classical by constructing appropriate sub and super solutions. In addition to that, they also obtained the optimal power related the the existence of finite energy solutions. In fact, a solution $u \in H_0^1(\Omega)$ exists if and only if $\gamma<3.$ In \cite{Yijing_Zhang}, Yijing and Zhang analyzed the threshold value $3$ when the datum $f \in L^1(\Omega)$ and a positive function, and provide a classical Lazer-Mckenna obstruction. In \cite{Bocc_ors, Cam_et_al}, authors studied the existence and uniqueness results when $f \in L^r(\Omega)$ for $r \geq 1$ and showed how the regularity of this solution depends upon the summability of the datum and the singular datum. In particular, Boccardo and Orsina \cite{Bocc_ors} proved the existence and regularity of distributional solution: 
\begin{equation*}
	\begin{aligned}
	\left\{
	\begin{array}{ll}
		u \in W_0^{1,\frac{Nr(1+\gamma)}{N-r(1-\gamma)}}(\Omega)  & \text{ if } \ 0<\gamma <1 \ \text{and} \ f \in L^r(\Omega) \ \text{with} \ r \in [1,(2^\ast/(1-\gamma))'),\vspace{0.1cm}\\
		u \in H_0^{1}(\Omega) & \text{ if } \ 0<\gamma <1 \ \text{and} \ f \in L^r(\Omega) \ \text{with} \ r=(2^\ast/(1-\gamma))',\vspace{0.1cm}\\
		u \in H_0^{1}(\Omega) & \text{ if } \ \gamma =1 \ \text{and} \ f \in L^1(\Omega),\vspace{0.1cm}\\
	u^{\frac{1+\gamma}{2}} \in H_0^{1}(\Omega) & \text{ if } \ \gamma >1 \ \text{and} \ f \in L^1(\Omega),
	\end{array} 
	\right.
	\end{aligned}
	\end{equation*}
Extending the work of \cite{Bocc_ors}, Arcoya and M\'erida in \cite{Arcoya_merida}, studied some particular cases of strongly singular elliptic equations {\it i.e.} $1< \gamma< \frac{3r-1}{r+1}$ and $f \in L^r(\Omega)$, $r>1$ and $f$ is strictly far away from zero on $\Omega$ and proved the power type Sobolev regularity  
\[ u^\alpha \in H_0^1(\Omega) \ \text{for all} \ \alpha \in \left(\frac{(r+1)(1+\gamma)}{4r}, \frac{1+\gamma}{2}\right] \]
Moreover, in connection with the same problem, in \cite{Bocc_diaz}, Boccardo and D\'iaz proved the uniqueness of finite energy solution by extending the set of admissible test functions. For similar works concerning the local elliptic or integral equations with purely singular nonlinearities, we refer to \cite{Zheng.al_2004, Diaz.al_1987, Gomes_1986, Stuart_1974, brando_chia} and singular nonlinearities with source terms or absorption terms, we refer to \cite{Coc.al_1989, Stuart_1976, Can.al_2004, Oliva_petitta} with no intent to furnish an exhaustive list. \vspace{0.1cm}

Turning to the nonlocal case, the singular problems have been investigated more recently and there are few works in the literature, in particular, with the fractional Laplacian $(-\Delta)^s$ and related to Lazer-Mckeena type problem (see for instance \cite{Adi.al_2018, Arora.al_2020, Arora_et-al-1, Barrios.al_2015}). In \cite{Barrios.al_2015}, Barrios et al. studied the solvability of the nonlocal problem in presence of singular nonlinearities and weight function $f$. In particular, they proved the existence and regularity of solution in very weak sense depending upon the regularity of the datum $f$ and the singular exponent $\gamma$,
\begin{equation*}
	\begin{aligned}
	\left\{
	\begin{array}{ll}
		u \in H_0^{s}(\Omega) & \text{ if } \ 0<\gamma \leq 1 \ \text{and} \ f \in L^r(\Omega) \ \text{with} \ r=(2_s^\ast/(1-\gamma))',\vspace{0.1cm}\\
	u^{\frac{1+\gamma}{2}} \in H_0^{s}(\Omega) & \text{ if } \ \gamma >1 \ \text{and} \ f \in L^1(\Omega),
	\end{array} 
	\right.
	\end{aligned}
	\end{equation*}
and very recently in \cite{Youssfi_Mahmoud}, Youssfi and Mahmoud extended the result of \cite{Arcoya_merida} to the fractional Laplacian and established the following 
\[u \in W_0^{1,\frac{Nr(1+\gamma)}{N-rs(1-\gamma)}}(\Omega)  \  \text{ if } \ 0<\gamma <1 \ \text{and} \ f \in L^r(\Omega) \ \text{with} \ r \in [1,(2_s^\ast/(1-\gamma))'),\]
and for a non-negative datum $f \in L^r(\Omega)$, $r>1$ and $\gamma>1$
\[ u^\alpha \in H_0^s(\Omega) \ \text{for all} \ \alpha \in \left(\max\left(\frac{1}{2},\frac{sr(1+\gamma)-r+1}{2rs}\right), \frac{1+\gamma}{2}\right].\]
In case of $f \sim \delta^{-\zeta}$ for some $\zeta \in [0,2s)$, Adimurthi et al. \cite{Adi.al_2018} and Arora et al. \cite{Arora.al_2020} studied the same problem with singular nonlinearites in case of $N >2s$ and $N=2s$ respectively, and discussed the existence and uniqueness results of the classical solutions with respect to the singular parameters. Moreover, using the integral representation via Green function and maximum principle, they proved the sharp boundary behavior of the weak solution. For further issues on nonlocal and nonlinear singular problems, the interested reader can consult to the
bibliographic references in  \cite{Arora_et-al-1, Arora_2, Canino_et_al, Cam_et_al, Giacomoni.al_2017}. \vspace{0.1cm}\\
\noindent  \textbf{Notations:} Throughout the paper, we assume that $\Omega \subset \mathbb{R}^N$ ($N \geq 2$) is a bounded domain with $C^{1,1}$ boundary. Set $\delta(x) := \dist(x, \partial \Omega)$ and $\mathcal{D}_{\Omega}=\diam(\Omega).$ For $i \in \mathbb{N}$, we denote by $C_i, c_i, d_i$ positive constants that may vary from line to line. If necessary, we will write $C = C(a, b)$ to emphasize the dependence
of $C$ on $a, b.$ For a number $q \in (1, \infty)$, we denote by $q'$
the conjugate exponent of $q$, namely $q'=\frac{q}{q-1}$. For two functions $f, g$, we write $f \lesssim g$ or $f \gtrsim g$ if there exists a constant
$C > 0$ such that $f \leq Cg$ or $f \geq Cg$. We write $f \sim g $ if $f \lesssim g$ and $g \gtrsim f$. For $\eta >0$, $\chi_{\Omega_\eta}$ denotes the characteristic function of $\Omega_\eta:= \{x \in \Omega: \delta(x) < \eta\}.$ Denote
$$r^\sharp:= \left(\frac{2^*}{(1-\gamma)}\right)' \ \text{for} \ 0\leq \gamma< 1 \ \text{and} \ 2^* := \frac{2N}{N-2}$$
$$\mathcal{P}_{r, \gamma}:= \{(r, \gamma): \ \text{$1 \leq r \leq + \infty, \ \gamma \geq 0\ \text{and} \ (r, \gamma) \neq (1,0)$}\},$$
and 
\[L^r_c(\Omega):= \{f \in L^r(\Omega): \supp(f) \Subset \Omega\}.\]
For $\gamma >0$ and $\zeta \geq 0$, we define a class of functions
$$\mathcal{A}_\zeta(\Omega):=\left\{f:\Omega \to \mathbb{R}^+ \cup \{0\}:  f \asymp \delta^{-\zeta} \right\}$$
and for $\zeta \neq 2$, denote 
$$\mathfrak{L}^*:= \frac{\gamma + \zeta-1}{2-\zeta}.$$
\subsection{Description of main results}
In the present work, we derive the qualitative properties of the weak solution to a mixed type elliptic problem for two different class of weight functions $f$, and in both presence or absence of singular nonlinearities {\it i.e.} $\gamma >0$ or $\gamma=0$ respectively. In this part, we give a short description of our main results and for a detailed presentation, we refer the reader to Section \ref{main:res}. \vspace{0.1cm}\\
For the first class of weight function $f$, {\it i.e.} $f \in L^r(\Omega)$ for $1 \leq r \leq \infty$, we show:
\begin{itemize}
    \item \textbf{Existence results:} For this, we use classical approach of regularizing the singular nonlinearities $u^{-\gamma}$ by $(u+\frac{1}{n})^{-\gamma}$ and derive uniform a priori estimates for the weak solution of the regularized problem. The crucial step here is to choose an appropriate test function in the energy space. By taking into the account the combined interaction of the summability of the datum $f$ and the singular exponent $\gamma$, we obtain our existence results in two disjoint subsets of our admissible set $\mathcal{P}_{r, \gamma}$ with different Sobolev regularity: 
    \begin{equation*}
	u \in 
	 \left\{
	\begin{array}{ll}
	W_0^{1,q}(\Omega) & \text{ if } \ (r,\gamma) \in \mathcal{P}_{r, \gamma} \cap \{(r, \gamma): r \in [1, r^\sharp), 0 \leq \gamma <1\}, \vspace{0.1cm}\\
	H^1_{loc}(\Omega) & \text{ if } \ (r,\gamma) \in \mathcal{P}_{r, \gamma} \setminus \{(r, \gamma): r \in [1, r^\sharp), 0 \leq \gamma < 1\},
	\end{array} 
	\right.
	\end{equation*}
with $q:= \frac{Nr(1+\gamma)}{N-r(1-\gamma)}.$ Here, the notion of solution in two disjoints subsets may differ due to different Sobolev regularity of the solution. Moreover, we observe that there is a kind of continuity in the summability exponents in the sense that as $r \to (r^\sharp)^-$, $q \to 2$ when $0 \leq \gamma < 1.$ These existence results for mixed type operator also complements the results for classical Laplacian in \cite{Bocc_ors} and fractional Laplacian in \cite{Barrios.al_2015, Youssfi_Mahmoud}. 
    \item \textbf{Power and exponential type Sobolev regularity of the weak solution}: Here the term power type Sobolev regularity means that $u^\alpha \in H_0^1(\Omega)$ for some $\alpha >0$ and the 
   exponential type Sobolev regularity means that there exists $\beta >0$ such that $$\exp(\beta u)-1 \in H_0^1(\Omega) \ \text{when} \ 0 \leq \gamma \leq 1,$$ and for $\tau \geq \gamma$, $$\left(\exp(\beta u)-1\right)^\tau \in H_0^1(\Omega) \ \text{when} \ \gamma >1.$$
   For this, we prove two types of power type Sobolev regularity result, depending upon the value of $\alpha,$ one with the help of appropriate choice of test functions when $\alpha$ is large and the second by using the lower boundary behavior of the approximating solution when $\alpha$ is small. \vspace{0.1cm}\\
   \textit{Type 1:} When $1 \leq r < \frac{N}{2}$, $\gamma \geq 0$, we show that
   \begin{equation*}
	\text{$u^{\alpha} \in H_0^1(\Omega)$ \ for any \ $\alpha \in \left[\frac{\gamma+1}{2}, \frac{\mathfrak{S}_r+1}{2}\right]$ \ where \ $ \mathfrak{S}_r:=\frac{N(r-1) + \gamma r(N-2)}{N-2r}$}.
	\end{equation*}
	We notice that as $r \to \frac{N}{2}^-$, $\mathfrak{S}_r \to \infty$ and so it natural to expect the exponential type Sobolev regularity when $r=\frac{N}{2}.$ In this regard, for $r= \frac{N}{2}$, we prove the exponential type Sobolev regularity in the sense mentioned above.  The first step in establishing such regularity results is to find an appropriate test function in the energy space to handle the singular nonlinearities and second step is to derive uniform a priori estimates for the approximating sequence with the same power type Sobolev regularity and then pass to the limits. This type of Sobolev regularity result are even new for the classical Laplacian and fractional Laplacian singular and non-singular problems.  
	\vspace{0.1cm}\\
   \textit{Type 2:} In this case, to handle the singular nonlinearities, we exploit the boundary behavior of the weak solution in deriving uniform a priori estimates. Precisely, we show that for any $r>1$ and $\gamma>0$,
   \begin{equation*}
	u^\alpha \in H_0^1(\Omega) \ \text{for any} \ \alpha \in 
	 \left\{
	\begin{array}{ll}
	\left(\frac{1}{2},\frac{\gamma+1}{2}\right] & \text{ if } \ \gamma+\frac{1}{r} < 1, \vspace{0.1cm}\\
	 \left(\frac{r\gamma+ 1}{2r},\frac{\gamma+1}{2}\right] & \text{ if } \ \gamma+\frac{1}{r} \geq  1.
	\end{array} 
	\right.
	\end{equation*}
In addition to that, we also have shown that if $\alpha \leq \frac{1}{2}$, then $u^\alpha \not\in H_0^1(\Omega)$ if $\gamma+\frac{1}{r} < 1$, which highlight the optimality of this result.
\item \textbf{Continuity with respect to datum}: An an application of the \textit{Type 1} power-type Sobolev regularity results, we show that for any $1 \leq r < \frac{N}{2}$ and $\mathfrak{S} \in [\gamma, \mathfrak{S}_r]$ and for minimal weak solution $u$ and $v$ with respect to datum $f$ and $g$ respectively, the following inequality holds 
\[
    \|\nabla |u-v|^{\frac{\mathfrak{S}+1}{2}}\|^2_{L^2(\Omega)}  \leq C~ \|f-g\|_{L^r(\Omega)}^{\frac{r(N-2)}{N-2r}} .
\]
The above inequality also implies the continuity of the solution with respect to given datum and comparison estimates. 
\end{itemize}
For the second class of weight function $f \in \mathcal{A}_\zeta(\Omega)$ for $\zeta \geq 0$, we show
\begin{itemize}
    \item \textbf{Existence results:} For this, we have followed the same classical method of regularizing the singular problem, but here to prove the uniform a priori estimates of the approximating sequence and handle singular nonlinearities, we cannot use the same approach of exploiting Lebesgue summability of the datum $f$, since for $\zeta \geq 1$, $f \not\in L^r(\Omega)$ for any $r \geq 1$. To resolve this issue, we prove new boundary estimates of the approximating sequence, by using the lower and upper bound estimates of the Green's kernel associated to the mixed operator. By considering the interplay of both singular exponents $\gamma>0$ and $\zeta \in [0,2)$, we obtain the following existence results with different Sobolev regularity: 
      \begin{equation*}
	u \in 
	 \left\{
	\begin{array}{ll}
	H_0^{1}(\Omega) & \text{ if } \ \gamma+ \zeta \leq 1, \vspace{0.1cm}\\
	H^1_{loc}(\Omega) & \text{ if } \ \gamma+ \zeta > 1.
	\end{array} 
	\right.
	\end{equation*}
    \item \textbf{Optimal boundary behavior:} To prove this, we study the action of the Green's operator on the inverse of the distance function perturbed with logarithmic nonlinearity. Using the lower and upper bound estimates of the Green's kernel \cite{Chenkim_Green1} and borrowing some techniques from \cite{AbaGomVaz_2019}, we show
    \[ \left\{
	\begin{array}{ll}
		u \asymp \delta & \text{ if } \ \zeta+\gamma <1,\vspace{0.1cm}\\
		u \asymp \delta \ln^{\frac{1}{2-\zeta}}(\frac{\mathcal{D}_\Omega}{\delta}) & \text{ if } \ \zeta+\gamma=1 , \vspace{0.1cm}\\
	u \asymp \delta^{\frac{2-\zeta}{\gamma+1}}  & \text{ if } \ \zeta+\gamma >1.
	\end{array} 
	\right.\]
    \item \textbf{Optimal power type Sobolev regularity and non-existence results}: As an application of the above optimal boundary behavior, we prove
    \[
    u^{\frac{\mathfrak{L}+1}{2}} \ \text{belongs to} \  H_0^1(\Omega) \quad \text{if and only if} \quad \mathfrak{L} > \left\{
	\begin{array}{ll}
	0  & \text{ if } \  \zeta+ \gamma \leq  1, \vspace{0.1cm}\\
	\mathfrak{L}^* & \text{ if } \ \zeta+ \gamma > 1.
	\end{array} 
	\right.
    \]
    and for $\zeta \geq 2$ and $\gamma>0$, non-existence results is established. These results for mixed type operator in this class complements the results for  fractional Laplacian in \cite{Adi.al_2018, Arora.al_2020}. 
\end{itemize}
The plan of the article is organized as follows: In section \ref{main:res}, we provide the detailed statement of main results of this work. In section \ref{prelim:res}, we prove some preliminary results for the approximated problem which will be used in the rest of the work. Section \ref{Sec:apri} is devoted to deriving uniform a priori estimates, Green's function estimates and boundary behavior of the solution of the approximated problem. In section \ref{sec:proof}, we provide the proof of our main results. At the end, we give a short appendix section \ref{sec:append}.

\section{Statement of main results}\label{main:res}
In this section, we write a detailed statement of our main results.
\subsection{Lebesgue weights}
We start with presenting our existence results:
\begin{thm}\label{result:1}
Assume that $f \in L^r(\Omega) \setminus \{0\}$ is non-negative and $(r, \gamma) \in \mathcal{P}_{r,\gamma} \cap \{(r, \gamma): r \in [1, r^\sharp), 0 \leq \gamma <1\}$. Then there exists a positive weak solution $u$ of the problem \eqref{main:problem} in the following sense: 
\begin{enumerate}
    \item[(i)] $u \in W_0^{1,q}(\Omega)$ with $q:= \frac{Nr(1+\gamma)}{N-r(1-\gamma)}.$
    \item[(ii)] for every $\omega \Subset \Omega$, there exists a constant $C=C(\omega)$ such that $
0< C(\omega) \leq u.$
   \item[(iii)] for every $\psi \in W_0^{1,q'}(\Omega) \cap L^{r'}_c(\Omega)$
\begin{equation}\label{weak:formula1}
\begin{split}
    \int_{\Omega} \nabla u \cdot \nabla \psi ~dx + \frac{C(N,s)}{2} \int_{\mathbb{R}^N} \int_{\mathbb{R}^N} \frac{(u(x)- u(y))(\psi(x)- \psi(y))}{|x-y|^{N+2s}} ~dx ~dy = \int_{\Omega}  \frac{f(x)}{u^\gamma}\psi ~dx.
\end{split}
\end{equation}
\end{enumerate}
\end{thm}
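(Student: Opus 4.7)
The plan is to follow the classical regularization scheme. For each $n \in \mathbb{N}$, set $f_n := \min\{f,n\} \in L^\infty(\Omega)$ and consider the non-singular auxiliary problem
\begin{equation*}
\mathcal{L} u_n = \frac{f_n}{(u_n + \tfrac{1}{n})^\gamma} \text{ in } \Omega, \qquad u_n = 0 \text{ in } \mathbb{R}^N \setminus \Omega.
\end{equation*}
Since the nonlinearity is bounded in $L^\infty(\Omega)$ by $n^{1+\gamma}\|f\|_\infty$, existence of a non-negative $u_n \in \mathbb{H}(\Omega) \cap L^\infty(\Omega)$ follows from Schauder's fixed-point theorem applied to $v \mapsto \mathcal{L}^{-1}(f_n/(v^+ + 1/n)^\gamma)$, together with the linear theory for $\mathcal{L}$. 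The weak comparison principle yields $u_n \leq u_{n+1}$, and the strong maximum principle for $\mathcal{L}$ (see \cite{Biagi_Vecchi}) gives $u_n \geq u_1 \geq C(\omega) > 0$ uniformly on every $\omega \Subset \Omega$. I would take these properties as established in the preliminary section.

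The heart of the proof is a uniform $W_0^{1,q}$ bound for $(u_n)$. Following the Boccardo--Orsina strategy, I would test the equation against functions of the form $\varphi_n = (u_n+\tfrac{1}{n})^\eta - n^{-\eta}$, which lie in $\mathbb{H}(\Omega)$ since $u_n$ vanishes on $\mathbb{R}^N \setminus \Omega$. For the local term this yields $\tfrac{4\eta}{(\eta+1)^2}\int_\Omega |\nabla (u_n+\tfrac{1}{n})^{(\eta+1)/2}|^2$; for the nonlocal term the analogous lower bound comes from the pointwise algebraic inequality
\begin{equation*}
(a-b)(a^\eta - b^\eta) \geq \tfrac{4\eta}{(\eta+1)^2}\bigl(a^{(\eta+1)/2} - b^{(\eta+1)/2}\bigr)^2, \qquad a,b \geq 0.
\end{equation*}
Combining Sobolev's inequality for $H_0^1(\Omega)$ on the left with H\"older and $f \in L^r$ on the right, and matching Sobolev and H\"older exponents appropriately, I would obtain a uniform $L^s$ bound on $u_n$. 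Feeding this back into the H\"older decomposition
\begin{equation*}
|\nabla u_n|^q = \Bigl(\frac{|\nabla u_n|^2}{(u_n+\tfrac{1}{n})^{1+\gamma}}\Bigr)^{q/2}(u_n+\tfrac{1}{n})^{(1+\gamma)q/2},
\end{equation*}
where the first factor is controlled by a separate test against $(u_n+\tfrac{1}{n})^{1-\gamma} - n^{\gamma-1}$ (using only $\|f\|_{L^1}$) and the second by the previous $L^s$ bound, the matching condition $(1+\gamma)q/(2-q) = s$ forces $q = \frac{Nr(1+\gamma)}{N-r(1-\gamma)}$ exactly. The hypothesis $r < r^\sharp$ is precisely what keeps $q < 2$.

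Once the uniform $W_0^{1,q}$ bound is obtained, weak compactness yields, along a subsequence, $u_n \rightharpoonup u$ in $W_0^{1,q}(\Omega)$ with pointwise a.e.\ convergence. Monotonicity transfers the positivity bound, giving (i) and (ii). For (iii), fix $\psi \in W_0^{1,q'}(\Omega) \cap L^{r'}_c(\Omega)$ with $K := \supp\psi \Subset \Omega$: the local term passes to the limit by weak convergence of $\nabla u_n$ in $L^q$ paired with $\nabla\psi \in L^{q'}$; the nonlocal bilinear form passes using the continuous inclusions $W_0^{1,q} \hookrightarrow W^{s,q}$ and $W_0^{1,q'} \hookrightarrow W^{s,q'}$, so $v \mapsto \int\!\!\int (v(x)-v(y))(\psi(x)-\psi(y))|x-y|^{-N-2s} dx\,dy$ is a continuous linear functional on $W^{s,q}$; the singular right-hand side passes by Lebesgue's theorem, because on $K$ one has the uniform pointwise bound $f_n |\psi|/(u_n+\tfrac{1}{n})^\gamma \leq f|\psi|/C(K)^\gamma \in L^1(\Omega)$ using $f \in L^r$, $\psi \in L^{r'}$.

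The main obstacle is the $W^{1,q}$ estimate: transporting the Boccardo--Orsina machinery to a mixed operator requires simultaneous control of the local gradient and the Gagliardo seminorm by a single test function, which works only because of the algebraic inequality above. A secondary subtlety is that the power-type test functions need to be verified admissible in $\mathbb{H}(\Omega)$ (vanishing on $\mathbb{R}^N \setminus \Omega$ rather than merely on $\partial \Omega$); this is handled by the fact that $u_n \equiv 0$ outside $\Omega$, but it also explains why (iii) is formulated against the restricted class $W_0^{1,q'} \cap L^{r'}_c(\Omega)$: compact support inside $\Omega$ is essential to control the singular right-hand side in the limit, since $u$ is only guaranteed to be locally bounded away from zero.
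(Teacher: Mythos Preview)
Your overall architecture---regularize, establish monotonicity and local positivity of $u_n$, derive a uniform $W_0^{1,q}$ bound, then pass to the limit---matches the paper exactly, and your treatment of the limit passage (weak convergence for the two bilinear forms against $\psi\in W_0^{1,q'}$, dominated convergence on the right using $\supp\psi\Subset\Omega$) is correct.

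The gap is in the $W_0^{1,q}$ estimate. Your decomposition places $(u_n+\tfrac1n)^{1+\gamma}$ in the denominator and claims that $\int |\nabla u_n|^2 (u_n+\tfrac1n)^{-(1+\gamma)}\,dx$ is bounded by testing against $(u_n+\tfrac1n)^{1-\gamma}-n^{\gamma-1}$; but that test produces $(1-\gamma)\int(u_n+\tfrac1n)^{-\gamma}|\nabla u_n|^2$ on the left and $\int f_n(u_n+\tfrac1n)^{1-2\gamma}$ on the right, neither of which matches your claim. Even after repairing the exponents so that the gradient test genuinely uses only $\|f\|_{L^1}$ (test with $(u_n+\tfrac1n)^\gamma-n^{-\gamma}$, decomposition exponent $1-\gamma$), the resulting matching $(1-\gamma)q/(2-q)=\sigma_r$ solves to a value of $q$ that is \emph{strictly smaller} than $\frac{Nr(1+\gamma)}{N-r(1-\gamma)}$ for every $r>1$: decoupling the gradient control from the Lebesgue bound wastes the $L^r$ information. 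The paper instead uses a \emph{single} $r$-dependent exponent $\mathfrak{S}_r=\frac{N(r-1)+\gamma r(N-2)}{N-2r}\in[\gamma,1)$; testing with $(u_n+\epsilon)^{\mathfrak{S}_r}-\epsilon^{\mathfrak{S}_r}$ simultaneously bounds $\int u_n^{\mathfrak{S}_r-1}|\nabla u_n|^2$ and (via Sobolev) $\|u_n\|_{L^{\sigma_r}}$, and the H\"older split with weight $u_n^{1-\mathfrak{S}_r}$ then gives the exact $q$ because $(1-\mathfrak{S}_r)q/(2-q)=\sigma_r$. In short, replace your fixed $1\pm\gamma$ by the $r$-dependent $1-\mathfrak{S}_r$ and drop the second test; the first one already does both jobs.
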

\begin{thm}\label{result:2}
Assume that $f \in L^r(\Omega) \setminus \{0\}$ is non-negative and $(r, \gamma) \in \mathcal{P}_{r,\gamma} \setminus \{(r, \gamma): r \in [1, r^\sharp), 0 \leq \gamma <1\}$. Then there exists a positive weak solution $u$ of the problem \eqref{main:problem} in the following sense: 
\begin{enumerate}
    \item[(i)] $u \in H^1_{loc}(\Omega)$ and $u=0 \ \text{on} \ \Omega \ \text{in the sense of Definition \ref{def:boundary}}.$
    \item[(ii)] for every $\omega \Subset \Omega$, there exists a constant $C=C(\omega)$ such that $
0< C(\omega) \leq u.$
\item[(iii)] for every $\psi \in \bigcup_{\tilde{\Omega} \Subset \Omega} H^1_{loc}(\tilde{\Omega}) \cap L^{r'}_c(\Omega)$
\begin{equation}\label{weak:formula2}
\begin{split}
    \int_{\Omega} \nabla u \cdot \nabla \psi ~dx + \frac{C(N,s)}{2} \int_{\mathbb{R}^N} \int_{\mathbb{R}^N} \frac{(u(x)- u(y))(\psi(x)- \psi(y))}{|x-y|^{N+2s}} ~dx ~dy = \int_{\Omega}  \frac{f(x)}{u^\gamma}\psi ~dx.
\end{split}
\end{equation}
\end{enumerate}
\end{thm}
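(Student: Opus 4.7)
My plan is to establish Theorem~\ref{result:2} by the standard approximation procedure, with the main work going into uniform local estimates and the generalized boundary condition from Definition~\ref{def:boundary}, since the complement region $\mathcal{P}_{r,\gamma}\setminus\{(r,\gamma):r\in[1,r^\sharp),0\le\gamma<1\}$ is precisely where a global $H^1_0$ bound cannot be expected in general (it contains both the strongly singular case $\gamma\ge 1$ and the regular-datum case $r\ge r^\sharp$, $0\le\gamma<1$, the latter of which actually sits inside $H^1_0$).

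First I would truncate the nonlinearity and the datum by setting $f_n:=\min\{f,n\}$ and studying the auxiliary problem $\mathcal{L} u_n = f_n(x)/(u_n+1/n)^{\gamma}$ in $\Omega$ with $u_n=0$ in $\mathbb{R}^N\setminus\Omega$. Because the right-hand side is uniformly bounded (by $n^{\gamma+1}$) once $u_n\ge 0$, a Schauder fixed-point argument applied to the map $v\mapsto u$ solving the linear mixed problem $\mathcal{L} u=f_n/(v^++1/n)^{\gamma}$, combined with the coercivity of the bilinear form associated to $\mathcal{L}$ on $\mathbb{H}(\Omega)$, produces $u_n\in H^1_0(\Omega)\cap L^\infty(\Omega)$. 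The weak comparison principle for $\mathcal{L}$ gives $u_n\ge 0$ and $u_n\le u_{n+1}$; the strong maximum principle for mixed operators (as developed in \cite{Biagi_Valdi,Biagi_Vecchi}) yields $u_n>0$, and applied to $u_1$ together with monotonicity it produces, for every $\omega\Subset\Omega$, a constant $C(\omega)>0$ with $u_n\ge u_1\ge C(\omega)$ on $\omega$, uniformly in $n$. This settles assertion~(ii).

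Next I would derive uniform $H^1_{\mathrm{loc}}$ estimates. Fix a cutoff $\eta\in C_c^\infty(\Omega)$ with $\eta\equiv 1$ on the target compact set and $\mathrm{supp}(\eta)\subset\tilde\Omega\Subset\Omega$, and test the regularized equation with $\eta^2 u_n$. The local part supplies $\int\eta^2|\nabla u_n|^2$ minus a cross term absorbable by Young's inequality; on $\mathrm{supp}(\eta)$ the uniform lower bound $u_n\ge C(\tilde\Omega)$ controls $u_n^{-\gamma}$, so that the singular right-hand side is bounded by $C\|f\|_{L^r}\|u_n\eta^2\|_{L^{r'}(\tilde\Omega)}$, which, by Sobolev embedding plus the already-absorbed gradient term, closes the estimate. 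In the subregion $r\ge r^\sharp$, $0\le\gamma<1$, I would instead test directly with $u_n$: Hölder with exponents $r$ and $2^*/(1-\gamma)$ combined with Sobolev produces a \emph{global} bound on $\|\nabla u_n\|_{L^2(\Omega)}$, which is stronger than needed. In both cases the nonlocal bilinear form evaluated on $(u_n,\eta^2 u_n)$ is nonnegative up to a symmetric commutator-type remainder controlled by $\|u_n\|_{L^2(\tilde\Omega)}$ and the $C^1$ norm of $\eta$; this is the standard pointwise manipulation $(a-b)(a\eta^2(x)-b\eta^2(y))\ge \tfrac12(a\eta(x)-b\eta(y))^2-C(a^2+b^2)(\eta(x)-\eta(y))^2$. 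Together with monotonicity this yields, up to a subsequence, $u_n\uparrow u$ a.e., $u_n\rightharpoonup u$ in $H^1_{\mathrm{loc}}(\Omega)$, and $u_n\to u$ in $L^p_{\mathrm{loc}}$ for any $p<2^*$. To verify the boundary condition of Definition~\ref{def:boundary}, I would test with $(u_n-\varepsilon)^+\in H^1_0(\Omega)$ for each $\varepsilon>0$; on the set $\{u_n>\varepsilon\}$ the singular denominator is bounded below by $\varepsilon^\gamma$, so the right-hand side is controlled by $\varepsilon^{-\gamma}\|f\|_{L^r}\|(u_n-\varepsilon)^+\|_{L^{r'}}$, giving a uniform $H^1_0$ bound on $(u_n-\varepsilon)^+$, hence $(u-\varepsilon)^+\in H^1_0(\Omega)$ by weak closure.

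Finally, to pass to the limit in the weak formulation~\eqref{weak:formula2} with $\psi\in H^1_{\mathrm{loc}}(\tilde\Omega)\cap L^{r'}_c(\Omega)$ supported in $K\Subset\Omega$, the local gradient term passes by weak convergence in $H^1(\tilde\Omega)$; the nonlocal term splits as $\int_\Omega\int_\Omega(\cdot)\,dx\,dy+2\int_K\psi(x)u_n(x)\bigl(\int_{\mathbb{R}^N\setminus\Omega}|x-y|^{-N-2s}\,dy\bigr)dx$, and the outer integral in the second piece is uniformly bounded for $x\in K$, so monotone/dominated convergence apply; the interior double integral is handled by splitting into the region near the diagonal in $K\times K$ (weak convergence in the corresponding Gagliardo seminorm from the $H^1_{\mathrm{loc}}$ bound) and the region at positive distance (a.e.\ plus dominated convergence using $u_n\le u$ and the lower bound on $|x-y|$). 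For the singular right-hand side, the uniform pointwise bound $f_n/(u_n+1/n)^\gamma\le f/C(K)^\gamma\in L^{r}(K)$ on $K$ together with $\psi\in L^{r'}_c$ makes the integrand dominated, and a.e.\ convergence plus Lebesgue's theorem finish the proof.

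The main obstacle I anticipate is the rigorous manipulation of the nonlocal bilinear form with localized test functions $\eta^2 u_n$ and $(u_n-\varepsilon)^+$: unlike the local gradient, the nonlocal seminorm does not admit a clean Leibniz rule, so the cross terms generated by the cutoff must be estimated through delicate pointwise inequalities of the form sketched above, and one must be careful that the tails (integration over $\mathbb{R}^N\setminus\tilde\Omega$) do not spoil the bound since $u_n$ lives on all of $\mathbb{R}^N$ by zero extension—this is exactly the point at which the mixed nature of $\mathcal{L}$ demands more care than the purely local or purely fractional proofs in \cite{Bocc_ors,Barrios.al_2015,Youssfi_Mahmoud}.
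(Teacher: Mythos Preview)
Your overall architecture---regularize, establish monotonicity and the compactum lower bound via strong maximum principle, derive local energy estimates, pass to the limit---matches the paper's scheme, and your treatment of the passage to the limit in the three terms of the weak formulation is essentially the same as the paper's (which in fact cites \cite{Canino_et_al} for the nonlocal limit step, so the pointwise manipulation you sketch is appropriate).

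The difference, and the place where your argument carries a genuine gap, is the mechanism for the uniform $H^1_{\mathrm{loc}}$ bound and for the boundary condition in the strongly singular regime $\gamma\ge 1$ with small $r$. Testing \eqref{def:notion:approx} with $\eta^2 u_n$, the right-hand side becomes, after using $u_n\ge C(\tilde\Omega)$ on $\mathrm{supp}\,\eta$, a multiple of $\|f\|_{L^r}\|u_n\|_{L^{r'}(\tilde\Omega)}$. To close this against the left-hand side via Sobolev embedding you need $r'\le 2^*$, i.e.\ $r\ge (2^*)'=\tfrac{2N}{N+2}$. But the statement covers the full range $r\ge 1$ once $\gamma\ge 1$ (e.g.\ $(r,\gamma)=(1,1)$ lies in the parameter set), and for $1\le r<\tfrac{2N}{N+2}$ you have $r'>2^*$ with no independent $L^{r'}$ bound on $u_n$ available at this stage. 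The same obstruction reappears in your verification of Definition~\ref{def:boundary}: $\varepsilon^{-\gamma}\|f\|_{L^r}\|(u_n-\varepsilon)^+\|_{L^{r'}}$ cannot be absorbed when $r'>2^*$.

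The paper avoids this by a different choice of test function. Instead of localizing with a cutoff, it tests globally with $(u_n+\epsilon)^{\mathfrak{S}}-\epsilon^{\mathfrak{S}}$ and lets $\epsilon\to 0$ (Lemma~\ref{lem:apri:weak}(i)). The point is that for $\mathfrak{S}=\gamma$ the singular denominator is \emph{exactly cancelled}: the right-hand side becomes $\int_\Omega f_n\,u_n^{\gamma-\gamma}=\int_\Omega f_n\le \|f\|_{L^1}$, which requires no control on $u_n$ whatsoever. This yields a global bound on $u_n^{(\gamma+1)/2}$ in $H^1_0(\Omega)$; since $(\gamma+1)/2\ge 1$ in this regime, one then extracts the $H^1_{\mathrm{loc}}$ bound on $u_n$ itself by writing $\int_\omega|\nabla u_n|^2\le C(\omega)^{-(\gamma-1)}\int_\omega u_n^{\gamma-1}|\nabla u_n|^2$ and invoking the compactum lower bound. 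The boundary condition $(u-\varepsilon)^+\in H^1_0(\Omega)$ then follows because $(u-\varepsilon)^+=g(u^{(\gamma+1)/2})$ with $g(t)=(t^{2/(\gamma+1)}-\varepsilon)^+$ Lipschitz and $g(0)=0$. In short, the power-type test function is not merely a stylistic alternative to your cutoff---it is what makes the argument go through for $r$ close to $1$.
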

Now, we state the regularity results showing power and exponential type Sobolev regularity depending upon the summability of the datum $f$ and the singular exponent $\gamma \geq 0$.
\begin{thm}\label{result:3}
Assume that $f \in L^r(\Omega) \setminus \{0\}$ is non-negative and $(r, \gamma) \in \mathcal{P}_{r, \gamma}$. Let $u$ be a weak solution of the problem \eqref{main:problem} obtained in Theorem \ref{result:1} and Theorem \ref{result:2}. Then,
\begin{itemize}
    \item[(i)]\textbf{(Weak case)} For $r \in [1, \frac{N}{2})$ and $N > 2$, 
    \begin{equation*}
	\text{$u^{\frac{\mathfrak{S}+1}{2}} \in H_0^1(\Omega)$ \ for any \ $\mathfrak{S} \in \left[\gamma,\mathfrak{S}_r\right]$ \ where \ $ \mathfrak{S}_r:=\frac{N(r-1) + \gamma r(N-2)}{N-2r}$}.
	\end{equation*}
	Moreover, $u$ belongs to $L^{\sigma_r}(\Omega)$ with $\sigma_r:= \frac{Nr(1+\gamma)}{N-2r}.$
    \item[(ii)]\textbf{(Limit case)} For $r= \frac{N}{2}$ and $N \geq 2$. Then 
    \begin{enumerate}
    \item \textbf{(Weak singularity)}  For $0 \leq \gamma \leq 1$,
    $$\mathfrak{H} \left(\frac{u}{2}\right) \in  H_0^1(\Omega) \quad \text{where} \quad \mathfrak{H}(t):= \exp\left(\beta t\right)-1$$
    where $\beta >0 $ such that \begin{equation}\label{range:est}
	\frac{2}{S(N) \|f\|_{\frac{N}{2}}} \geq
	 \left\{
	\begin{array}{ll}
	\beta \max\{1, \left(\beta\gamma^{-1}\right)^\gamma\} & \text{ if } \ 0 < \gamma \leq 1, \vspace{0.1cm}\\
	\beta & \text{ if } \ \gamma=0.
	\end{array} 
	\right.
	\end{equation}
Moreover, there exist constants  $C_1, C_2$ depending upon $\beta, S(N),f, |\Omega|$ such that  $$\int_{\Omega} \exp\left(\frac{ \beta N u}{N-2}\right) \leq C_1 \quad \text{when} \ N >2 \quad \text{and} \quad \|u\|_{L^\infty(\Omega)} \leq C_2 \quad  \text{when} \ N =2.$$
        \item \textbf{(Strong singularity)}  For $\tau \geq \gamma > 1$ and $\alpha >0 $ such that $\frac{\alpha}{2^{3-2\tau}} \max\{1, \alpha^\tau\}< \frac{1}{\tau S(N) \|f\|_{\frac{N}{2}}}$ 
    $$\mathfrak{D} \left(\frac{u}{2}\right) \in  H_0^1(\Omega) \quad \text{where} \quad \mathfrak{D}(t) = \left(\exp(\alpha t) -1\right)^{\tau}$$
and there exist constants  $C_3, C_4$ depending upon $\alpha, \tau, S(N),f, |\Omega|$ such that  $$\int_{\Omega} \exp\left(\frac{ 2N \alpha \tau u}{N-2}\right) \leq C_3 \quad  \text{when} \ N >2 \quad \text{and} \quad \|u_n\|_{L^\infty(\Omega)} \leq C_4 \quad  \text{when} \ N =2.$$
    \end{enumerate}
In particular, $u$ belongs to $L^s(\Omega)$ for every $s \in [1, \infty)$ and $N >2.$
    \item[(iii)]\textbf{(Strong case)} For $r > \frac{N}{2}$ and $N \geq 2$, $u$ belongs to $L^\infty(\Omega).$
    \item[(iv)]\textbf{(Exact Sobolev regularity)} For $r \in [1, r^\sharp)$, $0 \leq \gamma < 1$ and $N > 2$, $u$ belongs to $W_0^{1, q}(\Omega)$ with $q:= \frac{Nr(1+\gamma)}{N-r(1-\gamma)}$ and for any $r \in [1, \infty]$, $\gamma=1$ or $r \geq r^\sharp$ and $0 \leq \gamma < 1$, $u$ belongs to $H_0^1(\Omega).$  
    \end{itemize}
\end{thm}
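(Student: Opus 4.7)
The plan is to argue uniformly on the approximating sequence $u_n$, solving $\mathcal{L} u_n = f_n/(u_n+1/n)^\gamma$ with $f_n = T_n(f)$, which is constructed in the preliminary section and whose monotone limit furnishes the weak solution $u$ of Theorems~\ref{result:1}--\ref{result:2}. Every estimate below would be derived by testing the regularised equation with a $C^1$ nondecreasing function $G(u_n)$ of the solution. The local term becomes $\int G'(u_n)|\nabla u_n|^2$, and a Stroock--Varopoulos-type inequality $(a-b)(G(a)-G(b)) \geq (H(a)-H(b))^2$, valid whenever $G' = (H')^2$, shows that the nonlocal quadratic form majorises $[H(u_n)]_{H^s(\mathbb{R}^N)}^2$; thus it is always nonnegative and may be discarded when only the local coercivity is needed. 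Since $u_n$ is uniformly bounded from below on compacta by Theorems~\ref{result:1}(ii)--\ref{result:2}(ii) and $u_n \nearrow u$, the pointwise and weak lower-semicontinuity pass every uniform estimate to $u$ via Fatou and standard weak-compactness arguments.

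For part (i) I would take $G(t) = t^{\mathfrak{S}}$, $\mathfrak{S} \in [\gamma, \mathfrak{S}_r]$, with corresponding $H(t) = \tfrac{2\sqrt{\mathfrak{S}}}{\mathfrak{S}+1} t^{(\mathfrak{S}+1)/2}$. The weak formulation then gives
\[
\frac{4\mathfrak{S}}{(\mathfrak{S}+1)^{2}} \int_\Omega \bigl|\nabla u_n^{(\mathfrak{S}+1)/2}\bigr|^{2} \, dx \;\leq\; \int_\Omega f\, u_n^{\mathfrak{S}-\gamma}\, dx.
\]
Hölder with exponent $r$ on the right and Sobolev applied to $u_n^{(\mathfrak{S}+1)/2}$ on the left, together with the algebraic identities $(\mathfrak{S}_r+1)\tfrac{N}{N-2} = \sigma_r$ and $(\mathfrak{S}_r - \gamma)r' = \sigma_r$, reduce the top choice $\mathfrak{S}=\mathfrak{S}_r$ to $\bigl(\int u_n^{\sigma_r}\bigr)^{(N-2)/N} \leq C\|f\|_{r} \bigl(\int u_n^{\sigma_r}\bigr)^{1/r'}$. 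The assumption $r<N/2$, i.e.\ $(N-2)/N > 1/r'$, allows absorption and yields a uniform bound on $\|u_n\|_{L^{\sigma_r}}$; the intermediate values $\mathfrak{S} \in [\gamma, \mathfrak{S}_r)$ are then immediate by the same estimate plus interpolation.

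For part (ii), the weakly singular case uses $G(t) = e^{\beta t}-1$ with $H(t) = \tfrac{2}{\sqrt{\beta}}(e^{\beta t/2}-1)$, so the energy identity reads $\tfrac{4}{\beta}\int|\nabla \mathfrak{H}(u_n/2)|^{2} \leq \int f\,(e^{\beta u_n}-1)(u_n+1/n)^{-\gamma}$. Writing $e^{\beta u_n}-1 = \mathfrak{H}(u_n/2)^{2} + 2\mathfrak{H}(u_n/2)$ and splitting $\Omega$ according to $\{u_n \leq \gamma/\beta\}$ and its complement, the bounds $u_n^{-\gamma}(e^{\beta u_n/2}-1)^{2} \leq \max\{1,(\beta/\gamma)^{\gamma}\}\,\mathfrak{H}(u_n/2)^{2} + \text{l.o.t.}$ convert the singular integral into the form $\|f\|_{N/2}\|\mathfrak{H}(u_n/2)\|_{2^*}^{2}$ plus lower-order terms controllable by Young's inequality. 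The Sobolev embedding on the left-hand side then closes the estimate precisely when $\beta$ satisfies \eqref{range:est}, which is how the threshold in the statement enters. For $\gamma>1$ the same scheme works with $G(t) = \tau \alpha(e^{\alpha t}-1)^{2\tau-1}e^{\alpha t}$ so that $H(t) = (e^{\alpha t}-1)^{\tau}$; the factor $(e^{\alpha t}-1)^{\tau-1}$ is used to compensate the stronger singularity in the right-hand side, which forces the smallness condition on $\alpha\tau$. The $N=2$ case follows by a separate $L^{\infty}$ argument once the exponential integrability of $u_n$ is available. This step is the main obstacle, since the pointwise pairing between the exponential test function and the singular weight must be calibrated with the exact Sobolev constant $S(N)$ appearing in \eqref{range:est}.

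For part (iii), Stampacchia's iteration on $(u_n-k)^{+}$ suffices: on $\{u_n>k\}$ the singular factor is bounded by $k^{-\gamma}$, so that $\|\nabla(u_n-k)^{+}\|_{2} \leq C\|f\|_{r}|\{u_n>k\}|^{\eta}$ for some $\eta > 1/2^*$ when $r > N/2$, and the classical decay lemma yields $\|u_n\|_{L^{\infty}} \leq C$. Finally, for part (iv), I would recover the $W_0^{1,q}$ estimate with $q = Nr(1+\gamma)/(N-r(1-\gamma))$ by the Boccardo--Orsina trick: since testing with $(u_n+\epsilon)^{1-\alpha}-\epsilon^{1-\alpha}$ for a suitable $\alpha > 1$ gives control of $\int (u_n+\epsilon)^{-\gamma-1}|\nabla u_n|^{2}$, the Hölder decomposition
\[
\int_\Omega |\nabla u_n|^{q} \, dx \;\leq\; \Bigl(\int_\Omega \frac{|\nabla u_n|^{2}}{(u_n+\epsilon)^{\gamma+1}}\, dx\Bigr)^{q/2} \Bigl(\int_\Omega (u_n+\epsilon)^{(\gamma+1)q/(2-q)}\, dx\Bigr)^{(2-q)/2}
\]
combined with the $L^{\sigma_r}$ bound from part (i) (after verifying $(\gamma+1)q/(2-q) = \sigma_r$ with the stated choice of $q$) gives the desired bound uniformly in $n$, and the limit $\epsilon \to 0^{+}$, then $n \to \infty$, concludes. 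The borderline cases $r=r^{\sharp}$, $\gamma<1$, and $\gamma=1$ fall out of the same inequality by monotonicity in $q$.
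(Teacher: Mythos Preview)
Your treatment of parts (i)--(iii) matches the paper's approach: power test functions $u_n^{\mathfrak{S}}$ with H\"older--Sobolev absorption for (i), exponential test functions $\mathfrak{H}(u_n)$ and $\mathfrak{D}(u_n)$ with the smallness condition on $\beta$ (resp.\ $\alpha$) allowing absorption against $\|f\|_{N/2}$ for (ii), and Stampacchia truncation for (iii). The bookkeeping in (ii) differs slightly (you split at $u_n=\gamma/\beta$ and factor $e^{\beta u_n}-1=\mathfrak{H}(u_n/2)^2+2\mathfrak{H}(u_n/2)$, while the paper splits at $u_n=1$), but the mechanism and the resulting threshold \eqref{range:est} are the same.

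Part (iv), however, has a genuine error in the exponents. Your proposed test function $(u_n+\epsilon)^{1-\alpha}-\epsilon^{1-\alpha}$ with $\alpha>1$ is \emph{decreasing}, so the local term comes with the wrong sign and the nonlocal term is nonpositive rather than nonnegative; you cannot extract a useful upper bound on $\int(u_n+\epsilon)^{-\gamma-1}|\nabla u_n|^2$ this way (the only bound you get carries a factor $\epsilon^{-\gamma}$ that blows up). More decisively, the algebraic identity you invoke, $(\gamma+1)q/(2-q)=\sigma_r$, is false except when $r=1$: for instance with $\gamma=0$ one gets $q/(2-q)=Nr/(2N-2r-Nr)$, which equals $\sigma_r=Nr/(N-2r)$ only if $r=1$. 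The correct weight is $u_n^{-(1-\mathfrak{S}_r)}$, not $u_n^{-(1+\gamma)}$, and the matching identity is $(1-\mathfrak{S}_r)q/(2-q)=\sigma_r$ (this is the paper's \eqref{cont:expo}). No new test function is needed: the bound on $\int u_n^{\mathfrak{S}_r-1}|\nabla u_n|^2$ is exactly what part~(i) already supplies (since $\mathfrak{S}_r\in[\gamma,1)$ when $r\in[1,r^\sharp)$), so the H\"older split
\[
\int_\Omega|\nabla u_n|^q\,dx\;\le\;\Bigl(\int_\Omega u_n^{\mathfrak{S}_r-1}|\nabla u_n|^2\,dx\Bigr)^{q/2}\Bigl(\int_\Omega u_n^{\sigma_r}\,dx\Bigr)^{(2-q)/2}
\]
closes immediately from (i).
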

\begin{thm}\label{result:10}
Assume that $f \in L^r(\Omega) \setminus \{0\}$ is non-negative. Let $u$ be a weak solution of the problem \eqref{main:problem} obtained in Theorem \ref{result:1} and Theorem \ref{result:2}. Then,
    \begin{equation*}
	\text{$u^{\frac{\mathfrak{S}+1}{2}}$ belongs to \  $H_0^1(\Omega)$ \ for any \ $\mathfrak{S} \in \left(0,\gamma\right]$ \ if and only if \ $\gamma+\frac{1}{r} < 1$}.
	\end{equation*}
and 
    \begin{equation*}
	\text{$u^{\frac{\mathfrak{S}+1}{2}}$ belongs to \  $H_0^1(\Omega)$ \ for any \ $\mathfrak{S} \in \left(\gamma-1+\frac{1}{r},\gamma\right]$ \ if \ $\gamma+\frac{1}{r} \geq  1$}.
	\end{equation*}
\end{thm}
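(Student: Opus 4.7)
\begin{Proof}[Proof proposal for Theorem \ref{result:10}]
The plan is to use the same approximation scheme $\mathcal{L} u_n = f_n/(u_n+\tfrac1n)^\gamma$ (with $f_n = T_n(f)$) used in Theorems \ref{result:1}--\ref{result:2}, combined with the lower boundary estimate $u_n \gtrsim \delta$ that must already be available in Section \ref{Sec:apri} from the Green's function machinery. The key idea is the choice of the nonlinear test function $\phi = u_n^{\mathfrak{S}}$, with $\mathfrak{S}\in(\max\{0,\gamma-1+\tfrac1r\},\gamma]$, whose admissibility follows from $u_n\in H_0^1(\Omega)\cap L^\infty(\Omega)$ and $\mathfrak{S}>0$.

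First I would insert $\phi=u_n^{\mathfrak{S}}$ into the weak formulation. The local part yields $\frac{4\mathfrak{S}}{(\mathfrak{S}+1)^2}\,\|\nabla u_n^{(\mathfrak{S}+1)/2}\|_{L^2(\Omega)}^2$ by the chain rule, while the nonlocal bilinear term is controlled from below by $C_s\,[u_n^{(\mathfrak{S}+1)/2}]_{H^s(\mathbb{R}^N)}^2$ via the Stroock--Varopoulos inequality applied to the concave power $t\mapsto t^{\mathfrak{S}}$ (recall that both contributions have the same sign because $\mathfrak{S}>0$). Since $\mathfrak{S}\leq\gamma$ the right-hand side estimate becomes
\[
\int_\Omega f_n\,u_n^{\mathfrak{S}-\gamma}\,dx \;\leq\; C \int_\Omega f\,\delta^{\mathfrak{S}-\gamma}\,dx \;\leq\; C\,\|f\|_{L^r(\Omega)}\,\Bigl(\int_\Omega \delta^{-r'(\gamma-\mathfrak{S})}\,dx\Bigr)^{1/r'},
\]
where the first inequality uses the uniform-in-$n$ lower bound $u_n\gtrsim\delta$ coming from the Green kernel / Hopf-type estimate established for the mixed operator. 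The last integral is finite precisely when $r'(\gamma-\mathfrak{S})<1$, equivalently $\mathfrak{S}>\gamma-1+\tfrac1r$. In the regime $\gamma+\tfrac1r<1$ this imposes no restriction on $\mathfrak{S}\in(0,\gamma]$, producing the first part; in the regime $\gamma+\tfrac1r\geq 1$ the admissible window shrinks to $\mathfrak{S}\in(\gamma-1+\tfrac1r,\gamma]$, yielding the second part.

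Having the uniform bound $\|u_n^{(\mathfrak{S}+1)/2}\|_{H_0^1(\Omega)}\leq C$, I would extract a weakly convergent subsequence; by a.e. convergence $u_n\to u$ (known from Section \ref{prelim:res}) the weak limit must be $u^{(\mathfrak{S}+1)/2}$, and weak lower semicontinuity of the Dirichlet and Gagliardo seminorms closes the argument, giving $u^{(\mathfrak{S}+1)/2}\in H_0^1(\Omega)$.

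The remaining task is the reverse implication of the first statement. Here I would argue by producing, for each $\mathfrak{S}_0\in(0,\gamma-1+\tfrac1r]$ in the regime $\gamma+\tfrac1r>1$, an explicit datum $f\in L^r(\Omega)\setminus\{0\}$ for which $u^{(\mathfrak{S}_0+1)/2}\notin H_0^1(\Omega)$. The natural choice is $f\asymp\delta^{-\zeta}$ with $\zeta<\tfrac{1}{r}$ and $\zeta+\gamma>1$, so that $f\in L^r(\Omega)\cap\mathcal{A}_\zeta(\Omega)$. By the sharp boundary behaviour stated for the $\mathcal{A}_\zeta$ class (to be proved independently in the companion theorems), $u\asymp\delta^{(2-\zeta)/(1+\gamma)}$, hence $u^{(\mathfrak{S}+1)/2}\in H_0^1(\Omega)$ forces $\mathfrak{S}>\tfrac{\gamma+\zeta-1}{2-\zeta}$; choosing $\zeta$ close to $1/r$ places $\mathfrak{S}_0$ below this threshold and yields the desired obstruction. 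The main obstacle I foresee is the bookkeeping for the nonlocal term: justifying the Stroock--Varopoulos inequality for the approximated sequence with $\mathfrak{S}\in(0,1)$, and verifying that $u_n^{\mathfrak{S}}$ is an admissible test function in the full nonlocal bilinear form (not just locally), which requires decay of $u_n^{\mathfrak{S}}$ at $\partial\Omega$ together with the uniform lower bound $u_n\gtrsim\delta$ being independent of $n$.
\end{Proof}
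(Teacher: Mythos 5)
Your proof of the sufficiency direction (the forward implication of the first statement, and the second statement) mirrors the paper's Lemma~\ref{lem:lower:reg}: test the approximate equation with $(u_n+\epsilon)^{\mathfrak{S}}-\epsilon^{\mathfrak{S}}$, invoke the lower Hopf-type bound $u_n\gtrsim\delta$ uniform in $n$ (Theorem~\ref{Green:thm:1}), control the right-hand side by H\"older against the weight $\delta^{-(\gamma-\mathfrak{S})}$, observe that $\int_\Omega\delta^{-r'(\gamma-\mathfrak{S})}<\infty$ precisely when $\mathfrak{S}>\gamma-1+\tfrac1r$, and then pass to the limit by weak lower semicontinuity. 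Your ``Stroock--Varopoulos'' step is exactly the elementary algebraic inequality $(a-b)(a^\theta-b^\theta)\ge\frac{4\theta}{(\theta+1)^2}(a^{(\theta+1)/2}-b^{(\theta+1)/2})^2$ that the paper records as Lemma~\ref{prelim:inequa}(i), valid for all $\theta>0$, so there is no issue with $\mathfrak{S}<1$, and the nonlocal term can simply be dropped from below since it is nonnegative. This part is essentially identical to the paper.

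For the necessity (``only if'') direction you take a genuinely different route. The paper argues directly and datum-free: by Hardy's inequality and the lower bound $u\gtrsim\delta$ (which holds for any admissible $f$), $\|u^{(\mathfrak{S}+1)/2}\|_{H_0^1}^2\gtrsim\int_\Omega u^{\mathfrak{S}+1}\delta^{-2}\gtrsim\int_\Omega\delta^{\mathfrak{S}-1}=\infty$ whenever $\mathfrak{S}\le 0$, which settles the sharpness of the endpoint $\mathfrak{S}>0$ in the first regime. Your approach instead manufactures a specific datum $f\asymp\delta^{-\zeta}$ with $\zeta<\tfrac1r<\zeta+\gamma$ (possible whenever $\gamma+\tfrac1r>1$) and imports the two-sided boundary behaviour $u\asymp\delta^{(2-\zeta)/(1+\gamma)}$ from Theorem~\ref{result:6}. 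This would indeed prove the literal converse, but it is heavier machinery: Theorem~\ref{result:6} is proved later in the paper, rests on the $\mathcal{A}_\zeta$ Green-kernel estimates (Lemmas~\ref{lem:greest1}--\ref{lem:greenest}), and is stated for the minimal solution built from the $\mathcal{A}_\zeta$-type truncation $f_n=(f^{-1/\zeta}+n^{-(1+\gamma)/(2-\zeta)})^{-\zeta}$, not the $L^r$-type truncation $f_n=\min\{f,n\}$ used in Theorems~\ref{result:1}--\ref{result:2}; you would need a uniqueness argument to identify the two limits before Theorem~\ref{result:6} becomes applicable to the solution of Theorem~\ref{result:10}. (In the borderline case $\gamma+\tfrac1r=1$ your construction degenerates, but there the two intervals $(0,\gamma]$ and $(\gamma-1+\tfrac1r,\gamma]$ coincide, so nothing is lost.) The paper's Hardy argument is shorter, self-contained, and works for every datum, which is why it is the route taken; your version buys a stronger optimality statement (a concrete failing datum) at the cost of circularity concerns and a uniqueness dependency.
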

\begin{thm}\label{result:8}
Let $u$ and $v$ be two solutions of the problem \eqref{main:problem}-\eqref{bdry:cond} obtained in Theorem \ref{result:1} and Theorem \ref{result:2}  with respect to datum $f$ and $g$ in $L^r(\Omega)$ for $1 \leq r< \frac{N}{2}$ respectively. Then, for any $\mathfrak{S} \in [\gamma, \mathfrak{S}_r]$, there exists a constant $C=C(S(N), \mathfrak{S})$ independent of $u,v$ such that
\[
\begin{split}
    \|\nabla |u-v|^{\frac{\mathfrak{S}+1}{2}}\|^2_{L^2(\Omega)} & \leq C~ \|f-g\|_{L^r(\Omega)}^{\frac{r(N-2)}{N-2r}} .
\end{split}
\]
In addition to above, we have
\[ \int_{\Omega} |\nabla (u-v)_+^{\frac{\mathfrak{S}_r+1}{2}}|^2 ~dx \leq \int_{\Omega} (f(x)-g(x)) (u-v)_+^{\mathfrak{S}_r-\gamma} ~dx.\]
\end{thm}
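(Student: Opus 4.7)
The plan is to establish the second (energy) display first and deduce the first by a Sobolev-Hölder bootstrap. The test function $(u-v)_+^{\mathfrak{S}}$ is not compactly supported in $\Omega$ and hence is not directly admissible in the weak formulations of Theorems~\ref{result:1}-\ref{result:2}; accordingly I would work at the level of the regularised minimal solutions $u_n,v_n\in H_0^1(\Omega)\cap L^\infty(\Omega)$ constructed in Section~\ref{prelim:res}, where the test function is a bona fide element of $H_0^1(\Omega)$, and pass to the limit at the very end.

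Subtracting $\mathcal{L}u_n = f/(u_n+1/n)^\gamma$ and $\mathcal{L}v_n = g/(v_n+1/n)^\gamma$ and inserting $\phi=(u_n-v_n)_+^{\mathfrak{S}}$ with $\mathfrak{S}\in[\gamma,\mathfrak{S}_r]$, the local part produces
$\frac{4\mathfrak{S}}{(\mathfrak{S}+1)^2}\|\nabla (u_n-v_n)_+^{(\mathfrak{S}+1)/2}\|_{L^2}^2$, while the nonlocal contribution is non-negative because $t\mapsto t_+^{\mathfrak{S}}$ is non-decreasing, so the Gagliardo form $[\psi(a)-\psi(b)](a-b)$ carries the correct sign. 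For the right-hand side, I would use the decomposition
\[
\frac{f}{(u_n+\tfrac{1}{n})^\gamma}-\frac{g}{(v_n+\tfrac{1}{n})^\gamma} = \frac{f-g}{(u_n+\tfrac{1}{n})^\gamma}+g\left[\frac{1}{(u_n+\tfrac{1}{n})^\gamma}-\frac{1}{(v_n+\tfrac{1}{n})^\gamma}\right];
\]
monotonicity of $t\mapsto t^{-\gamma}$ makes the bracketed term non-positive on $\{u_n>v_n\}$, and it may therefore be discarded. The remaining piece is tamed by the key algebraic cancellation
\[
\frac{(u_n-v_n)_+^{\mathfrak{S}}}{(u_n+\tfrac{1}{n})^\gamma}\leq (u_n-v_n)_+^{\mathfrak{S}-\gamma},
\]
valid because $\mathfrak{S}\geq\gamma$ and $0\leq(u_n-v_n)_+\leq u_n\leq u_n+\tfrac{1}{n}$. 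Passing $n\to\infty$ by Fatou on the left and dominated convergence on the right (justified by the $L^{\sigma_r}$ bound of Theorem~\ref{result:3}(i)) then yields the second display.

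For the first display, specialise to $\mathfrak{S}=\mathfrak{S}_r$ and exploit the crucial exponent identity $r'(\mathfrak{S}_r-\gamma)=\tfrac{1}{2}\cdot 2^*(\mathfrak{S}_r+1)=\sigma_r$, which aligns Sobolev on the left and Hölder on the right. Writing $A=\|(u-v)_+\|_{L^{\sigma_r}}$ and $F=\|f-g\|_{L^r}$, the coupled estimate $A^{\mathfrak{S}_r+1}\leq CFA^{\mathfrak{S}_r-\gamma}$ yields $A\leq (CF)^{1/(1+\gamma)}$, and feeding this back produces
\[
\|\nabla (u-v)_+^{(\mathfrak{S}_r+1)/2}\|_{L^2}^2\leq C\,F^{(\mathfrak{S}_r+1)/(1+\gamma)} = C\,F^{r(N-2)/(N-2r)}
\]
after verifying the exponent simplification. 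For intermediate $\mathfrak{S}\in[\gamma,\mathfrak{S}_r)$ one has $r'(\mathfrak{S}-\gamma)\leq\sigma_r$, so Hölder on the bounded domain together with the just-obtained $L^{\sigma_r}$ bound reduces the case $\mathfrak{S}$ to the case $\mathfrak{S}_r$, with a constant $C(S(N),\mathfrak{S},|\Omega|)$. Running the symmetric argument with $(v-u)_+$ in place of $(u-v)_+$ and combining the one-sided estimates delivers the claim for $|u-v|^{(\mathfrak{S}+1)/2}$.

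The principal obstacle is the careful verification of the test-function computation at the regularised level: in particular, checking that the nonlocal bilinear form evaluated on $\bigl(u_n-v_n,(u_n-v_n)_+^{\mathfrak{S}}\bigr)$ has the correct non-negative sign via the Gagliardo convexity inequality, and then passing cleanly to the limit while preserving this sign on the gradient term. This relies on the strong convergence of $u_n,v_n$ in $L^{\sigma_r}$ and the minimality of the approximating sequence, both furnished by Section~\ref{Sec:apri}.
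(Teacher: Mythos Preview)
Your strategy---work at the level of the approximants $u_n,v_n$, test with a power of $(u_n-v_n)_+$, discard the nonlocal term by monotonicity, split the right-hand side and use $ t\mapsto t^{-\gamma}$ decreasing to drop the $g$-term, then bootstrap with the exponent identity $r'(\mathfrak{S}_r-\gamma)=\tfrac{2^*}{2}(\mathfrak{S}_r+1)$ and symmetrise---is exactly the paper's. Two points deserve correction.

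First, a genuine technical gap: when $\mathfrak{S}<1$ (which does occur, e.g.\ for $0\le\gamma<1$ and $r<r^\sharp$, since then $\mathfrak{S}_r<1$), the map $t\mapsto t_+^{\mathfrak{S}}$ is not locally Lipschitz at $0$, so $(u_n-v_n)_+^{\mathfrak{S}}$ need not lie in $H_0^1(\Omega)$ and is \emph{not} a bona fide test function as you claim. The paper resolves this by a second layer of regularisation: it tests with $[(u_n-v_n)_++\epsilon]^{\mathfrak{S}_r}-\epsilon^{\mathfrak{S}_r}\in H_0^1(\Omega)$, passes $\epsilon\to0$ (Fatou on the left, dominated convergence on the right, with an explicit split into the cases $\mathfrak{S}_r\le1$ and $\mathfrak{S}_r>1$ for the gradient term), and only afterwards sends $n\to\infty$ via weak lower semicontinuity. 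Your identification of the ``principal obstacle'' as the nonlocal sign and the $n$-limit misses this; the $\epsilon$-step is where the real care is needed.

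Second, a minor slip: the approximated equations carry $f_n,g_n$ (the truncations), not $f,g$; this is harmless in the limit since $f_n\to f$, $g_n\to g$ in $L^r$, but the dominated-convergence bound on the right-hand side should be written against $f_n(u_n+1/n)^{\mathfrak{S}_r-\gamma}\le f(u+1)^{\mathfrak{S}_r-\gamma}$ as the paper does.
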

\begin{cor}\label{result:9}
Under the condition of Theorem \ref{result:8}, if $f \leq g$, then $u \leq v$ a.e. in $\Omega.$
\end{cor}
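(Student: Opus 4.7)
\begin{Proof}[Proof proposal for Corollary \ref{result:9}]
The plan is to deduce the comparison principle directly from the second (non-symmetric) estimate established in Theorem \ref{result:8}, namely
\[
\int_{\Omega} \bigl|\nabla (u-v)_+^{\frac{\mathfrak{S}_r+1}{2}}\bigr|^2 \, dx \;\leq\; \int_{\Omega} \bigl(f(x)-g(x)\bigr)\,(u-v)_+^{\mathfrak{S}_r-\gamma} \, dx.
\]

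First, under the hypothesis $f \leq g$ a.e.\ in $\Omega$, the integrand on the right-hand side is nonpositive, since $(u-v)_+^{\mathfrak{S}_r-\gamma} \geq 0$ (here $\mathfrak{S}_r \geq \gamma$ by definition, so the exponent is well-defined and the power is nonnegative). Hence the right-hand side is $\leq 0$, while the left-hand side is $\geq 0$. Therefore both sides vanish and $\nabla (u-v)_+^{\frac{\mathfrak{S}_r+1}{2}} \equiv 0$ a.e.\ in $\Omega$.

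Next I would verify that $w := (u-v)_+^{\frac{\mathfrak{S}_r+1}{2}}$ belongs to $H_0^1(\Omega)$, so that a Poincar\'e-type argument can convert the vanishing gradient into vanishing of $w$ itself. By Theorem \ref{result:3}(i), both $u^{\frac{\mathfrak{S}_r+1}{2}}$ and $v^{\frac{\mathfrak{S}_r+1}{2}}$ lie in $H_0^1(\Omega)$ (and in particular $u,v = 0$ on $\partial\Omega$ in the sense of Definition \ref{def:boundary}), so $(u-v)_+$ satisfies the same generalized zero boundary condition, and standard truncation/composition arguments show that the nonlinear power $w$ inherits membership in $H_0^1(\Omega)$. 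Combined with $\nabla w = 0$ a.e., the Poincar\'e inequality forces $w \equiv 0$ in $\Omega$, i.e.\ $(u-v)_+ = 0$ a.e., which is precisely the claim $u \leq v$ a.e.\ in $\Omega$.

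The only technical point I anticipate is the verification that $w \in H_0^1(\Omega)$, i.e.\ that the positive-part operation and the power $\tfrac{\mathfrak{S}_r+1}{2}$ preserve the generalized Dirichlet boundary condition; this is routine given the chain rule for Sobolev functions and the fact that $\mathfrak{S}_r+1 \geq 1$, but it should be stated explicitly to justify applying Poincar\'e. Once that is in place, the corollary follows immediately from the displayed inequality of Theorem \ref{result:8} with no further estimation required.
\end{Proof}
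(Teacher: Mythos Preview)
Your proposal is correct and follows essentially the same route as the paper: the paper's proof simply says the corollary ``follows from the inequality relation \eqref{cont:est6} by taking $f \leq g$,'' which is exactly the second displayed estimate of Theorem \ref{result:8} that you invoke. Your additional remarks about membership of $(u-v)_+^{\frac{\mathfrak{S}_r+1}{2}}$ in $H_0^1(\Omega)$ and the ensuing Poincar\'e argument just make explicit what the paper leaves implicit (this membership is already obtained in the course of proving Theorem \ref{result:8}).
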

\begin{remark}
Adopting the same arguments of \cite[Theorem 1.2]{Can_sci} for the Laplacian and \cite[Theorem 1.4]{Canino_et_al} for fractional Laplacian, we can obtain the uniqueness of weak solution proved in Theorem \ref{result:2}. The crucial step is to exploit the $H^1_{loc}(\Omega)$ regularity of weak solution $u$. To avoid repeating the same steps, we skip the proof.  
\end{remark}
\begin{remark}
Let $(r, \gamma) \in \mathcal{P}_{r, \gamma}$ such that $\mathfrak{S}_r \geq 1.$ In this case, the weak solution $u \in H_0^1(\Omega)$ therefore by classical density arguments, the class of test function in the weak formulation can be extended to $H_0^1(\Omega).$
\end{remark}
\subsection{Singular weights}
In this case of weight function, we work with $N \geq 3$. First, we start with presenting our existence results:
\begin{thm}\label{result:4}
Let $\gamma >0$ and $ \zeta \in [0,2)$ such that $\gamma+ \zeta \leq 1$, and $f \in \mathcal{A}_\zeta(\Omega)$. Then there exists a positive minimal solution $u$ of the problem \eqref{main:problem} in the following sense: 
\begin{enumerate}
    \item[(i)] $u \in H_0^{1}(\Omega).$
  \item[(ii)] for every $\psi \in H_0^{1}(\Omega)$ 
\begin{equation}\label{weak:formula3}
\begin{split}
    \int_{\Omega} \nabla u \cdot \nabla \psi ~dx + \frac{C(N,s)}{2} \int_{\mathbb{R}^N} \int_{\mathbb{R}^N} \frac{(u(x)- u(y))(\psi(x)- \psi(y))}{|x-y|^{N+2s}} ~dx ~dy = \int_{\Omega}  \frac{f(x)}{u^\gamma}\psi ~dx.
\end{split}
\end{equation}
\end{enumerate}
\end{thm}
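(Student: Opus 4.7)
The plan is to construct the solution as the monotone limit of solutions to a family of non-singular, truncated problems. Let $f_n(x):=\min\{f(x),n\}$ and consider the approximate problem
\begin{equation*}
\mathcal{L}u_n = \frac{f_n(x)}{\bigl(u_n+\tfrac{1}{n}\bigr)^{\gamma}} \quad \text{in } \Omega, \qquad u_n=0 \quad \text{in } \mathbb{R}^N\setminus\Omega.
\end{equation*}
Existence of $u_n\in H_0^1(\Omega)\cap L^\infty(\Omega)$ follows from a Schauder fixed-point argument in $L^2(\Omega)$ combined with the linear theory for $\mathcal{L}$; the weak comparison principle for $\mathcal{L}$ yields $u_n>0$ in $\Omega$ and $u_n\le u_{n+1}$. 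These steps are carried out in Section~\ref{prelim:res}.

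The heart of the proof is a uniform $H_0^1(\Omega)$ estimate. The key ingredient is the $n$-independent lower boundary bound $u_n(x)\ge u_1(x)\gtrsim\delta(x)$: monotonicity yields the first inequality, while the second comes from the Green kernel estimates of~\cite{Chenkim_Green1} applied to the bounded, strictly positive right-hand side of the $n=1$ problem. Combining this with $f\asymp\delta^{-\zeta}$ gives the pointwise domination
\begin{equation*}
\frac{f_n(x)}{\bigl(u_n(x)+\tfrac{1}{n}\bigr)^{\gamma}} \le \frac{C}{\delta(x)^{\zeta+\gamma}} \quad \text{in } \Omega.
\end{equation*}
Testing the equation for $u_n$ with $u_n$ itself (an admissible test function, since the right-hand side is bounded once $n$ is fixed) yields
\begin{equation*}
\|\nabla u_n\|_{L^2(\Omega)}^{2} + \tfrac{C(N,s)}{2}\iint_{\mathbb{R}^N\times\mathbb{R}^N}\frac{(u_n(x)-u_n(y))^{2}}{|x-y|^{N+2s}}\,dx\,dy \;\le\; C\int_\Omega \delta^{-(\zeta+\gamma)}\,u_n\,dx.
\end{equation*}
Writing $\delta^{-(\zeta+\gamma)}u_n=\delta^{1-\zeta-\gamma}\cdot(u_n/\delta)$, the hypothesis $\zeta+\gamma\le 1$ makes $\delta^{1-\zeta-\gamma}\in L^\infty(\Omega)$, so Cauchy--Schwarz together with the classical Hardy inequality $\|u_n/\delta\|_{L^2(\Omega)}\le C\|\nabla u_n\|_{L^2(\Omega)}$ allows one to absorb the right-hand side into the left and obtain $\|u_n\|_{H_0^1(\Omega)}\le C$ uniformly in $n$.

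With this uniform bound, pass to a subsequence so that $u_n\rightharpoonup u$ in $H_0^1(\Omega)$, with $u_n\to u$ strongly in $L^p(\Omega)$ for every $p<2^{*}$ and a.e.\ pointwise. The limit satisfies $u\ge u_1\gtrsim\delta$, so $f/u^{\gamma}$ is finite a.e. In the weak formulation against an arbitrary $\psi\in H_0^1(\Omega)$, the local and nonlocal bilinear forms pass by weak convergence, while for the singular term the pointwise convergence $f_n(x)/(u_n(x)+1/n)^{\gamma}\to f(x)/u(x)^{\gamma}$ combined with the $n$-uniform dominant $C\delta^{-(\zeta+\gamma)}|\psi|$ and Hardy's inequality applied to $\psi$ justify Lebesgue's dominated convergence. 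Minimality is obtained by a standard comparison argument: for any positive weak supersolution $v$, testing the difference equation with $(u_n-v)_{+}$ and exploiting the monotonicity of $r\mapsto r^{-\gamma}$ together with $f_n\le f$ shows $(u_n-v)_+\equiv 0$, so $u_n\le v$ for every $n$, hence $u\le v$. The principal obstacle is the uniform a priori estimate, which crucially exploits the assumption $\gamma+\zeta\le 1$ to pair the sharp boundary behavior of $u_n$ with Hardy's inequality.
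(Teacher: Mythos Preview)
Your proof is correct and follows essentially the same route as the paper: regularize via $(P_n)$, use the lower barrier $u_n\gtrsim\delta$ from Green's function estimates, obtain a uniform $H_0^1$ bound by testing with $u_n$, and pass to the limit in the singular term via the Hardy inequality applied to the test function $\psi$. One minor variation worth noting: for the a~priori estimate the paper's Lemma~\ref{sobolev:reg} (with $\mathfrak{L}=1$) bounds $\int f_n\,u_n^{1-\gamma}$ using the \emph{upper} barrier $u_n\lesssim\delta$ from Lemma~\ref{bdry:est}, whereas your absorption argument via $\delta^{-(\zeta+\gamma)}u_n=\delta^{1-\zeta-\gamma}(u_n/\delta)$ and Hardy closes the estimate using only the lower barrier --- a slight simplification that sidesteps the need for the upper boundary estimate in this regime.
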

\begin{thm}\label{result:5}
Let $\gamma >0$ and $ \zeta \in [0,2)$ such that $\gamma+ \zeta > 1$ and $f \in \mathcal{A}_\zeta(\Omega)$. Then there exists a positive minimal solution $u$ of the problem \eqref{main:problem} in the following sense: 
\begin{enumerate}
    \item[(i)] $u \in H^1_{loc}(\Omega)$ and $u=0 \ \text{on} \ \Omega \ \text{in the sense of Definition \ref{def:boundary}}.$
    \item[(ii)] for every $\omega \Subset \Omega$, there exists a constant $C=C(\omega)$ such that $
0< C(\omega) \leq u.$
\item[(iii)] for every $\psi \in  H^1_0(\Omega)$ in case of $\mathfrak{L}^* \leq 1$ and $\psi \in \bigcup_{\tilde{\Omega} \Subset \Omega} H^1_{loc}(\tilde{\Omega})$ with $\supp(\psi) \Subset \Omega$ in case of $\mathfrak{L}^* > 1$,
\begin{equation}\label{weak:formula4}
\begin{split}
    \int_{\Omega} \nabla u \cdot \nabla \psi ~dx + \frac{C(N,s)}{2} \int_{\mathbb{R}^N} \int_{\mathbb{R}^N} \frac{(u(x)- u(y))(\psi(x)- \psi(y))}{|x-y|^{N+2s}} ~dx ~dy = \int_{\Omega}  \frac{f(x)}{u^\gamma}\psi ~dx.
\end{split}
\end{equation}
\end{enumerate}
\end{thm}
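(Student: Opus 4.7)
The plan is to adapt the classical Boccardo–Orsina regularization scheme to the mixed local–nonlocal setting. For each $n\in\mathbb{N}$, I consider the approximate problem
\[
\mathcal{L}u_n = \frac{f_n(x)}{(u_n+\tfrac1n)^{\gamma}} \quad \text{in } \Omega, \qquad u_n=0 \text{ in } \mathbb{R}^N\setminus\Omega,
\]
where $f_n(x):=\min\{f(x),n\}$ is bounded. Since the right-hand side is bounded and monotone decreasing in $u_n$, a Schauder fixed-point argument (applied to the solution operator $v\mapsto u$ of $\mathcal{L}u=f_n/(v_++1/n)^\gamma$, which is well-posed by the variational theory for $\mathcal{L}$ developed in \cite{Biagi_Valdi, Biagi_Vecchi}) yields $u_n\in H^1_0(\Omega)\cap L^\infty(\Omega)$. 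The weak maximum principle for $\mathcal{L}$ gives $u_n>0$ in $\Omega$ and the monotonicity $u_n\leq u_{n+1}$. Moreover, by comparison with the (positive) solution $u_1$ of the first step, one obtains the uniform local lower bound $u_n\geq u_1\geq C(\omega)>0$ on any $\omega\Subset\Omega$, yielding part (ii).

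The decisive ingredient is a uniform pointwise upper bound of the form $u_n(x)\leq C\,\delta(x)^{(2-\zeta)/(\gamma+1)}$, which will be obtained from the Green's function representation
\[
u_n(x)=\int_\Omega G(x,y)\,\frac{f_n(y)}{(u_n(y)+1/n)^\gamma}\,dy
\]
together with the sharp two-sided Green's kernel estimates for $\mathcal{L}$ of Chen–Kim \cite{Chenkim_Green1}. Testing this integral representation against the ansatz $u_n \leq C\,\delta^{\beta}$ (with $\beta=(2-\zeta)/(\gamma+1)$) and inserting the behavior $f\sim\delta^{-\zeta}$ reduces matters to estimating $\int_\Omega G(x,y)\,\delta(y)^{-\zeta-\gamma\beta}\,dy$, which is the computation performed in the companion boundary-behavior part of the paper (cf.\ the techniques of \cite{AbaGomVaz_2019}). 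This gives both the correct boundary decay used to verify $(u-\epsilon)^+\in H^1_0(\Omega)$ via the standard cutoff argument and the pointwise control needed to pass to the limit.

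With the two-sided control $C(\omega)\leq u_n(x)\leq C\,\delta(x)^{\beta}$ in hand, I establish the local $H^1$ bound: fix $\omega\Subset\Omega$ and a cutoff $\phi\in C_c^\infty(\Omega)$ with $\phi\equiv 1$ on $\omega$; testing the equation for $u_n$ with $u_n\phi^2$ gives
\[
\int_\Omega|\nabla u_n|^2\phi^2\,dx\leq 2\int_\Omega u_n|\nabla u_n|\,\phi|\nabla\phi|\,dx + (\text{nonlocal tail terms}) + \int_\Omega \frac{f_n\,u_n\phi^2}{(u_n+1/n)^\gamma}\,dx,
\]
and on $\supp\phi$ the singular term is dominated by $f\cdot C(\omega)^{1-\gamma}$ which is in $L^1$ locally (since $f$ is locally bounded away from $\partial\Omega$). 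Absorbing, one gets uniform $\|\nabla u_n\|_{L^2(\omega)}\leq C(\omega)$. Combined with the monotone a.e.\ convergence $u_n\uparrow u$, this yields $u\in H^1_{\mathrm{loc}}(\Omega)$.

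Finally, I pass to the limit in the weak formulation. For a test function $\psi$ with compact support inside $\Omega$ (the case $\mathfrak{L}^*>1$), the local $H^1$ convergence handles the Dirichlet term, the nonlocal term is controlled by the uniform $\|u_n\|_{\mathbb{H}(\Omega)}$-tail estimate coming from the pointwise upper bound (which ensures that $(u_n(x)-u_n(y))/|x-y|^{(N+2s)/2}$ has integrable majorant on $\mathbb{R}^N\times\mathbb{R}^N\setminus(\Omega^c\times\Omega^c)$), and the singular term is handled by monotone convergence together with the local lower bound $u_n\geq C(\omega)$ on $\supp\psi$. In the easier case $\mathfrak{L}^*\leq 1$, one checks separately that the boundary behavior $u\lesssim\delta^{(2-\zeta)/(\gamma+1)}$ forces $u\in H^1_0(\Omega)$ (by a Hardy-type inequality against $\delta$), so the test function class extends by density to all of $H^1_0(\Omega)$. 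The minimality of $u$ follows from the standard argument that any other solution $v$ satisfies $u_n\leq v$ for all $n$, by comparison applied to the approximate equations. The main obstacle I anticipate is the verification of the sharp upper boundary estimate through the Green's function representation, since the interplay between the local and the nonlocal part of $G$ forces a delicate case analysis near $\partial\Omega$; all the remaining passages to the limit are then quite standard once this bound and the uniform local estimates are in place.
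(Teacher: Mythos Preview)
Your overall scheme matches the paper's: regularize, obtain boundary behavior of $u_n$ through the Green representation and the Chen--Kim estimates, derive a local $H^1$ bound, and pass to the limit. The paper, however, does not use a cutoff test function $u_n\phi^2$ to get the $H^1_{\mathrm{loc}}$ estimate. Instead it tests the approximate equation with $(u_n+\epsilon)^{\mathfrak L}-\epsilon^{\mathfrak L}$ and uses the upper boundary behavior $u_n\lesssim\delta^{(2-\zeta)/(\gamma+1)}$ to bound $\int_\Omega f_n\,u_n^{\mathfrak L-\gamma}$, obtaining a \emph{global} estimate: $u_n^{(\mathfrak L+1)/2}$ is bounded in $H^1_0(\Omega)$ for every $\mathfrak L>\mathfrak L^*$. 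From this, the $H^1_{\mathrm{loc}}$ bound in the case $\mathfrak L^*>1$ is immediate via $\int_\omega|\nabla u_n|^2\le C(\omega)^{-(\mathfrak L-1)}\int_\omega u_n^{\mathfrak L-1}|\nabla u_n|^2$, and in the case $\mathfrak L^*\le 1$ one simply takes $\mathfrak L=1$ to get $u_n$ bounded directly in $H^1_0(\Omega)$. This power-type testing also automatically controls the nonlocal quadratic form globally, which your cutoff argument leaves as a somewhat vague ``tail term''.

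There is a genuine gap in your treatment of the case $\mathfrak L^*\le 1$. You write that the pointwise bound $u\lesssim\delta^{(2-\zeta)/(\gamma+1)}$ ``forces $u\in H^1_0(\Omega)$ (by a Hardy-type inequality against $\delta$)''. This is false: Hardy's inequality says $u\in H^1_0(\Omega)\Rightarrow u/\delta\in L^2$, not the converse, and no amount of pointwise decay of $u$ yields a gradient bound up to $\partial\Omega$. Your cutoff testing only produces bounds on $\int_\omega|\nabla u_n|^2$ for $\omega\Subset\Omega$, which is insufficient to conclude $u\in H^1_0(\Omega)$ or to justify testing against arbitrary $\psi\in H^1_0(\Omega)$. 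The paper closes this gap precisely by the global estimate on $u_n^{(\mathfrak L+1)/2}$ in $H^1_0(\Omega)$; without an analogous global energy bound, the extension of the test-function class to $H^1_0(\Omega)$ in the case $\mathfrak L^*\le 1$ is not justified in your argument.
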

Now, we state the regularity results displaying optimal power type Sobolev regularity results and optimal boundary behavior of minimal solution depending upon the singular exponents $\zeta$ and $\gamma$.
\begin{thm}\label{result:6}
Let $\gamma>0$, $\zeta \in [0,2)$ and $u$ be a weak solution of the problem \eqref{main:problem} obtained in Theorem \ref{result:4} and Theorem \ref{result:5}. Then,
\begin{equation*}
	u \ \text{belongs to} \  \mathcal{B}_{\gamma, \zeta}(\Omega) \ \text{where} \ 	\begin{aligned}
	\mathcal{B}_{\gamma, \zeta}(\Omega)=  \left\{
	\begin{array}{ll}
		u: u \asymp \delta & \text{ if } \ \zeta+\gamma <1,\vspace{0.1cm}\\
		u: u \asymp \delta \ln^{\frac{1}{2-\zeta}}(\frac{\mathcal{D}_\Omega}{\delta}) & \text{ if } \ \zeta+\gamma=1 , \vspace{0.1cm}\\
	u: u \asymp \delta^{\frac{2-\zeta}{\gamma+1}}  & \text{ if } \ \zeta+\gamma >1,
	\end{array} 
	\right.
	\end{aligned}
	\end{equation*}
	and     \begin{equation*}
	u^{\frac{\mathfrak{L}+1}{2}} \ \text{belongs to} \  H_0^1(\Omega) \quad \text{if and only if} \quad \mathfrak{L} > \left\{
	\begin{array}{ll}
	0  & \text{ if } \  \zeta+ \gamma \leq  1, \vspace{0.1cm}\\
	\mathfrak{L}^* & \text{ if } \ \zeta+ \gamma > 1.
	\end{array} 
	\right.
\end{equation*}
\end{thm}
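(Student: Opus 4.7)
My plan is to first establish the optimal boundary behavior and then deduce the optimal Sobolev exponent from it. The starting point is the integral representation $u(x) = \int_{\Omega} G(x,y) f(y) u(y)^{-\gamma}\,dy$, where $G$ denotes the Green's kernel of $\mathcal{L}$, whose sharp two-sided bounds from \cite{Chenkim_Green1} give $G(x,y) \asymp |x-y|^{2-N}\min\{1,\delta(x)\delta(y)/|x-y|^{2}\}$ near the boundary. The heart of the argument is therefore to compute the action of the Green operator $\mathbb{G}$ on pure distance-power functions $\delta^{-\beta}$ (with a logarithmic correction in the borderline case), adapting techniques from \cite{AbaGomVaz_2019}; once $\mathbb{G}[\delta^{-\beta}]$ is understood, an ansatz/iteration argument pins down $u$ through a bootstrap on $u^{-\gamma}$.

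For the three regimes I would proceed as follows. If $\zeta+\gamma<1$, the guess $u\asymp\delta$ makes the source term $f/u^{\gamma}\asymp \delta^{-(\zeta+\gamma)}$ with exponent $<1$, so the Green operator returns a profile comparable to $\delta$; rigor is supplied by explicit sub-/super-solutions $c_\pm\,\delta$ (or $c_\pm\,\varphi_1$) combined with the weak comparison principle established for the approximating problem in Section \ref{prelim:res}. If $\zeta+\gamma>1$, the natural self-similar balance obtained by matching $\mathcal{L}(\delta^\theta)\sim \delta^{\theta-2}$ with $\delta^{-(\zeta+\gamma\theta)}$ forces $\theta=(2-\zeta)/(\gamma+1)$, and comparison with $c_\pm\,\delta^\theta$ closes the case. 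For $\zeta+\gamma=1$ the pure power ansatz fails and must be corrected to $w:=\delta\ln^{1/(2-\zeta)}(\mathcal{D}_\Omega/\delta)$; a careful computation of $\mathcal{L}w$ against $f/w^\gamma$ using the kernel bounds identifies the exponent $1/(2-\zeta)$ as the unique one for which the logarithmic factor closes up. In all three cases the minimality of $u$ (already secured in Theorems \ref{result:4}--\ref{result:5}) allows the comparison arguments to transfer from the regularized problem to the limit.

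Having $u \asymp \mathcal{B}_{\gamma,\zeta}(\Omega)$ in hand, the Sobolev characterization follows from a chain-rule computation and a Hardy-type inequality. For sufficiency I test the regularized equation by $u_n^{\mathfrak{L}}$, obtaining
\begin{equation*}
\frac{4\mathfrak{L}}{(\mathfrak{L}+1)^2}\int_{\Omega} \bigl|\nabla u_n^{(\mathfrak{L}+1)/2}\bigr|^2 dx + \mathcal{N}_s(u_n) \leq \int_{\Omega} f_n\, u_n^{\mathfrak{L}-\gamma}\,dx,
\end{equation*}
where the fractional bilinear piece $\mathcal{N}_s(u_n)$ is non-negative (since $t\mapsto t^{\mathfrak{L}}$ is monotone on $(0,\infty)$). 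I bound the right-hand side using $f\asymp \delta^{-\zeta}$ together with the already-known profile $u\asymp\mathcal{B}_{\gamma,\zeta}$: the resulting integral of a power (possibly times a log) of $\delta$ is finite precisely when $\mathfrak{L}>0$ (respectively $\mathfrak{L}>\mathfrak{L}^*$), after which Fatou and weak compactness yield the conclusion for $u$. For necessity I assume $u^{(\mathfrak{L}+1)/2}\in H_0^1(\Omega)$ and apply the classical Hardy inequality to get $\int_{\Omega} u^{\mathfrak{L}+1}/\delta^2\,dx<\infty$; substituting the sharp lower bound $u\gtrsim \mathcal{B}_{\gamma,\zeta}$ into this integral returns exactly the thresholds $\mathfrak{L}>0$ and $\mathfrak{L}>\mathfrak{L}^*$, confirming sharpness.

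The main obstacle is the borderline regime $\zeta+\gamma=1$: pinning down that the \emph{exact} exponent $1/(2-\zeta)$ appears in the logarithmic correction requires a two-sided asymptotic expansion of $\mathbb{G}[\delta^{-1}]$ finer than in the power regimes, done by splitting the Green integral into zones near and far from the singular point and controlling the cancellation between local and nonlocal contributions; this is where the methods of \cite{AbaGomVaz_2019} and the sharp kernel bounds of \cite{Chenkim_Green1} must be combined most carefully, and where the nonlocal part of $\mathcal{L}$ could \emph{a priori} distort the purely local logarithmic profile (it does not, but this must be checked).
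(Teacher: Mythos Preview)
Your proposal is correct and follows essentially the same strategy as the paper: Green-kernel estimates for $\mathbb{G}^\Omega[\delta^{-\beta}]$, sub/super-solution comparison to extract the boundary profile, testing the approximating equation with $u_n^{\mathfrak{L}}$ for the ``if'' part of the Sobolev regularity, and Hardy's inequality together with the lower boundary bound for the ``only if'' part. Two points deserve comment.

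First, in the borderline case $\zeta+\gamma=1$ the paper does \emph{not} compare directly with the final profile $w=\delta\,\ell^{1/(2-\zeta)}$ as you propose. Instead it introduces the one-parameter family $w_\Xi:=\mathbb{G}^\Omega[\delta^{-1}\ell^{-\Xi}]$ (so that $\mathcal{L} w_\Xi$ is known \emph{exactly}), proves the two-sided bound $w_\Xi\asymp \delta\,\ell^{1-\Xi}$ via Lemmas~\ref{lem:greest1}--\ref{lem:greest2} (this is your ``finer expansion of $\mathbb{G}[\delta^{-1}]$'', but with the logarithmic weight already built into the source term), and then \emph{iterates} the comparison principle: each step improves the logarithmic exponent from $\Xi_j$ to $\Xi_{j+1}=\gamma(1-\Xi_j)$, and the fixed point $\Xi=\gamma/(1+\gamma)$ yields $1-\Xi=1/(1+\gamma)=1/(2-\zeta)$. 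Your direct one-shot comparison with $c_\pm w$ can be made to work as well (choose $\Xi=\gamma/(1+\gamma)$ from the outset and use $w_\Xi$ as barrier), but note that the required estimate is for $\mathbb{G}^\Omega[\delta^{-1}\ell^{-\Xi}]$, not merely $\mathbb{G}^\Omega[\delta^{-1}]$. The iteration buys robustness: it starts from the crude a priori bounds of Lemma~\ref{bdry:est} and refines them, so no separate verification that the barrier constants close up is needed.

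Second, for $\zeta+\gamma>1$ the barriers $c_\pm\delta^\theta$ with $\theta=(2-\zeta)/(\gamma+1)<1$ are not regular enough at $\partial\Omega$ to apply $\mathcal{L}$ pointwise and run a classical comparison; the paper circumvents this by working at the approximated level with the $\epsilon$-regularized barriers $v_{\epsilon_n}=\mathbb{G}^\Omega[(\delta+\epsilon_n^{1/\kappa})^{-(2-\kappa)}]$ (Lemma~\ref{lem:greenest} and the proof of Lemma~\ref{bdry:est}) and only then passing $n\to\infty$. Your sketch should acknowledge this regularization step. Also, for the sufficiency direction of the Sobolev claim you bound $\int f_n u_n^{\mathfrak{L}-\gamma}$ using the profile of $u$; since you are at the $n$-level you actually need the \emph{uniform} boundary behavior of $u_n$ from Lemma~\ref{bdry:est}, which is what the paper uses in Lemma~\ref{sobolev:reg}.
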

As an applications of above theorem, we prove the following non-existence result:
\begin{thm}\label{result:7}
Let $\zeta \geq 2.$ Then, there does not exists a weak solution of the problem \eqref{main:problem} in the sense of Theorem \ref{result:4} and Theorem \ref{result:5}.
\end{thm}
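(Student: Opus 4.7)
The plan is to argue by contradiction, exploiting the boundary behaviour structure revealed in Theorem \ref{result:6}: the critical exponent $(2-\zeta)/(\gamma+1)$ governing the asymptotics of the minimal solution is $\le 0$ for $\zeta\ge 2$, so any candidate solution would have to blow up on $\partial\Omega$, violating the Dirichlet condition. Suppose therefore that a weak solution $u$ of \eqref{main:problem}--\eqref{bdry:cond} exists for some $f \in \mathcal{A}_\zeta(\Omega)$ with $\zeta \ge 2$.

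First, I would introduce the regularised sequence $\{u_n\}$ of positive, bounded, classical solutions of
\[
\mathcal{L} u_n = \frac{\min(f,n)}{(u_n+1/n)^\gamma} \quad \text{in } \Omega, \qquad u_n=0 \quad \text{in } \mathbb{R}^N\setminus\Omega,
\]
constructed in Section \ref{prelim:res}. The sequence is non-decreasing in $n$, and a weak comparison argument for the mixed operator $\mathcal{L}$ (available from the theory in \cite{Biagi_Valdi, Biagi_Vecchi}), applied to $u_n - u$ on its positivity set, yields $u_n \le u$ a.e.\ in $\Omega$.

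The heart of the argument is the construction of a sub-solution $\underline{u}$ of $\mathcal{L} v = f/v^\gamma$ that diverges at the boundary. For $\zeta>2$ the natural choice is $\underline{u}(x) := c\,\delta(x)^{-(\zeta-2)/(\gamma+1)}$, and for $\zeta=2$ the logarithmic barrier $\underline{u}(x) := c\,\bigl(\log(\mathcal{D}_\Omega/\delta(x))\bigr)^{1/(\gamma+1)}$; in both cases the exponent is obtained by equating the leading orders of $\mathcal{L}\underline{u}$ and $f\,\underline{u}^{-\gamma}$ near $\partial\Omega$. Using the two-sided Green's kernel estimates from \cite{Chenkim_Green1} and the asymptotic computation of $\mathcal{L}$ applied to powers (and logarithms) of the distance function developed in the proof of Theorem \ref{result:6} (and its analogue in \cite{AbaGomVaz_2019} for the purely nonlocal setting), I would verify that $\underline{u}$ is indeed a sub-solution of the regularised problem in a neighbourhood $\Omega_\eta$ of $\partial\Omega$ for every $n$ sufficiently large, with $c$ and $\eta$ independent of $n$. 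The weak comparison principle then gives $u_n \ge \underline{u}$ in $\Omega_\eta$, whence $u\ge \underline{u}$ and $u(x) \to +\infty$ as $\delta(x)\to 0$.

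This contradicts the generalised Dirichlet condition of Definition \ref{def:boundary}: since $(u-\epsilon)^+ \in H^1_0(\Omega)$ for every $\epsilon>0$, the trace of $(u-\epsilon)^+$ vanishes on $\partial\Omega$, forcing $u\le \epsilon$ near $\partial\Omega$ in the trace/capacity sense, which is incompatible with an unbounded lower bound. The main obstacle I anticipate is the rigorous verification that $\underline{u}$ is a sub-solution of the mixed operator problem; in particular, evaluating the nonlocal contribution $(-\Delta)^s\underline{u}$ for a function that diverges on $\partial\Omega$ requires separating near-boundary and interior contributions with care. I expect this can be carried out by isolating the singular power/logarithm behaviour along the inner normal direction and absorbing the remainder into the constant $c$, in the same spirit as the Green-function-based computations used in Section \ref{Sec:apri} for Theorem \ref{result:6}.
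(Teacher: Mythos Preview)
Your proposed sub-solution comparison has a fatal boundary mismatch. The barrier $\underline{u}(x)=c\,\delta(x)^{-(\zeta-2)/(\gamma+1)}$ (or its logarithmic analogue when $\zeta=2$) blows up on $\partial\Omega$, whereas each approximating solution $u_n$ lies in $C^{1,\beta}(\overline\Omega)$ and vanishes on $\partial\Omega$. Any comparison principle on $\Omega_\eta$ (local or nonlocal) requires $\underline{u}\le u_n$ on the complement of $\Omega_\eta$; at $\partial\Omega$ you have $\underline{u}=+\infty>0=u_n$, so the inequality $u_n\ge \underline{u}$ in $\Omega_\eta$ simply cannot be obtained. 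This is not a technical wrinkle in evaluating $(-\Delta)^s\underline{u}$ but a structural obstruction: a bounded function that vanishes on $\partial\Omega$ can never dominate a function that diverges there.

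The paper sidesteps this by a \emph{subcritical} comparison. One picks $\zeta_0<2$ close to $2$ and $g\in\mathcal{A}_{\zeta_0}$ with $g\le f$, and lets $v_n$ solve the regularised problem with weight $\Gamma g_n$. These $v_n$ are bounded and vanish on $\partial\Omega$, so the comparison $v_n\le w$ (with $w$ the hypothetical solution for $\zeta\ge2$) is legitimate via the localisation argument of Theorem~4.2 in \cite{Canino_et_al}. The already-established lower bound of Lemma~\ref{bdry:est} then gives
\[
c_1\bigl(\delta(x)+n^{-(1+\gamma)/(2-\zeta_0)}\bigr)^{(2-\zeta_0)/(\gamma+1)}-c_2 n^{-1}\le v_n(x)\le w(x),
\]
and Hardy's inequality combined with $w^{\mathfrak{L}_0}\in H^1_0(\Omega)$ forces $\int_\Omega \delta^{2\mathfrak{L}_0(2-\zeta_0)/(\gamma+1)-2}\,dx<\infty$, which fails as $\zeta_0\to 2^-$. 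The point is that all objects in the comparison remain bounded for each fixed $\zeta_0<2$; the divergence is produced only in the limit, not by inserting a divergent barrier directly.
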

\begin{remark}
Repeating the same proof \cite[Theorem 1.1]{Arora_et-al-1}, we can obtain the uniqueness of weak solution proved in Theorem \ref{result:4} and Theorem \ref{result:5} when $\gamma >0$ and $\beta \in [0, \frac{3}{2})$.
\end{remark}
\begin{remark}
Let $\zeta \in [0,2), \gamma >0$ such that $\gamma+\zeta \leq 1$, and $\zeta+\gamma>1$ and $\mathfrak{L}^* < 1.$ In this case, the weak solution $u \in H_0^1(\Omega)$ therefore by classical density arguments, the class of test function in the weak formulation can be extended to $H_0^1(\Omega).$ The condition $\mathfrak{L}^* < 1$ can be rewritten as $\gamma + 2 \zeta <3$ which further reduces to the classical Lazer-Mckeena obstruction in case of $\zeta=0.$
\end{remark}
\section{Preliminary lemmas}\label{prelim:res}
\begin{Lem}\label{lem:prelimi}
Let $h \in L^\infty(\Omega)$, $h \geq 0$ and $h \not \equiv 0$. Then the problem 
\begin{equation}\tag{$S$} \left\{
\begin{aligned}
-\Delta u + (-\Delta)^s u& =h, \quad u>0 \quad &&\text{ in }  \Omega, \\
u & = 0 &&  \text{ in } \ \mathbb{R}^N \setminus \Omega,
\end{aligned}
\right.
\end{equation}
admits a unique positive weak solution $u$. Moreover, $u \in L^\infty(\Omega) \cap C^{1, \beta}(\overline{\Omega})$ for every $\beta \in (0,1).$ 
\end{Lem}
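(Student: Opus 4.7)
The plan is to split the argument into four stages: existence and uniqueness in the energy space, positivity, $L^\infty$ bound, and finally $C^{1,\beta}$ regularity up to the boundary.

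First, I would observe that the bilinear form
\[
a(u,\psi):=\int_\Omega \nabla u\cdot\nabla\psi\,dx+\frac{C(N,s)}{2}\int_{\mathbb{R}^N}\!\int_{\mathbb{R}^N}\frac{(u(x)-u(y))(\psi(x)-\psi(y))}{|x-y|^{N+2s}}\,dx\,dy
\]
is continuous and coercive on $\mathbb{H}(\Omega)\simeq H_0^1(\Omega)$: continuity is immediate from Cauchy--Schwarz together with the continuous embedding $H_0^1(\Omega)\hookrightarrow H_0^s(\mathbb{R}^N)$, while coercivity follows from the Poincar\'e inequality applied to the local part (the nonlocal term is nonnegative). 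Since $h\in L^\infty(\Omega)\subset L^2(\Omega)$, the linear functional $\psi\mapsto\int_\Omega h\psi\,dx$ is bounded on $H_0^1(\Omega)$, and Lax--Milgram yields a unique $u\in H_0^1(\Omega)$ satisfying the weak formulation. Testing the equation against $u^-\in H_0^1(\Omega)$ (using that the nonlocal form of $u$ tested against $u^-$ produces a nonnegative contribution, a now-standard computation) and using $h\geq 0$ forces $u^-\equiv 0$, hence $u\geq 0$ a.e.

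Next, for strict positivity, I would invoke the strong maximum principle available for the mixed operator $\mathcal{L}$ (established in Biagi--Dipierro--Valdinoci--Vecchi and in Del Pezzo--Quaas type results already cited in the introduction): since $h\geq 0$, $h\not\equiv 0$, and $u\geq 0$ is a supersolution of $\mathcal{L} u=0$ which does not vanish identically, one concludes $u>0$ a.e.\ in $\Omega$.

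For the $L^\infty$ bound, I would run a Stampacchia truncation argument: choosing the test function $G_k(u):=(u-k)^+$ for $k>0$, dropping the (nonnegative) nonlocal part on the left, and using the Sobolev inequality together with $|\{u>k\}|$-estimates yields the classical iteration
\[
\|u\|_{L^\infty(\Omega)}\leq C\bigl(N,|\Omega|\bigr)\,\|h\|_{L^q(\Omega)},\qquad q>N/2,
\]
so in particular $u\in L^\infty(\Omega)$ since $h\in L^\infty\subset L^q$. Alternatively, the same conclusion can be reached by a comparison with the unique solution of $-\Delta v=\|h\|_\infty$ with zero Dirichlet data (using that $(-\Delta)^s$ applied to a nonnegative function extended by zero produces a favourable sign on $\Omega$ up to lower-order terms).

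The delicate part is the $C^{1,\beta}(\overline{\Omega})$ regularity; this is where I expect the main obstacle. Having $u\in L^\infty(\mathbb{R}^N)$ (by the zero extension) allows one to interpret $(-\Delta)^s u$ pointwise a.e.\ in $\Omega$ and to bound it in $L^p_{loc}$ for every $p<\infty$ using interior regularity for the fractional Laplacian together with the $H^1$-information on $u$. Rewriting the equation as
\[
-\Delta u = h-(-\Delta)^s u\qquad\text{in }\Omega,
\]
one gets a right-hand side in $L^p_{loc}(\Omega)$, and standard Calder\'on--Zygmund estimates for the Laplacian under $C^{1,1}$ boundary regularity of $\Omega$ upgrade $u$ to $W^{2,p}_{loc}$, which bootstraps the regularity of $(-\Delta)^s u$. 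For the boundary behaviour, I would invoke the global $C^{1,\beta}(\overline{\Omega})$ regularity results for the mixed operator established in Biagi--Dipierro--Valdinoci--Vecchi (the references \cite{Biagi_Valdi,Biagi_Vecchi} in the introduction) applied with right-hand side in $L^\infty$; these results handle precisely the compatibility between the local Dirichlet datum and the global vanishing condition $u=0$ in $\mathbb{R}^N\setminus\Omega$, and give $u\in C^{1,\beta}(\overline{\Omega})$ for every $\beta\in(0,1)$. Uniqueness has already been obtained in the Lax--Milgram step.
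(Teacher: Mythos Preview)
Your proposal is correct and follows essentially the same route as the paper: existence/uniqueness in $H_0^1(\Omega)$, nonnegativity via testing with $u^-$, strict positivity via the strong maximum principle of \cite{Biagi_Vecchi}, and $L^\infty\cap C^{1,\beta}(\overline{\Omega})$ regularity by Stampacchia truncation together with the results in \cite{Biagi_Valdi,Biagi_Valdi-1}. The only cosmetic difference is that the paper obtains existence by direct minimization of the energy functional rather than by Lax--Milgram; for this linear, symmetric, coercive problem the two are equivalent, and your formulation is arguably cleaner since uniqueness comes for free.
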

\begin{proof}
To prove the existence result, we use the classical minimization arguments from the Calculus of variations. For the sake of completeness, we provide a brief sketch of the proof. For any $h \in L^\infty(\Omega)$, $h \geq 0$ and $h \not\equiv 0$, we define the energy functional $\mathcal{I}_h: H_0^1(\Omega) \to \mathbb{R}$ such that
$$\mathcal{I}_h(u):= \frac{1}{2} \int_{\Omega} |\nabla u|^2 ~dx + \frac{C(N,s)}{4} \int_{\mathbb{R}^N} \int_{\mathbb{R}^N} \frac{|u(x)-u(y)|^2}{|x-y|^{N+2s}} ~dx ~dy - \int_{\Omega} h u  ~dx.$$
First, we find the minimizer of the above energy functional and then look for the solution of the problem $(S)$ as a critical point of $\mathcal{I}_h.$ In fact, using \cite[Proposition 2.2]{Nezza_palatucci_valdi} and Sobolev's embedding, we observe that $\mathcal{I}_h$ is well-defined and
$$\mathcal{I}_h(u) \geq \frac{1}{2} \|\nabla u\|_2^2 -  |\Omega|^\frac{1}{2} \|h\|_{\infty} \|u\|_{2} \geq \|\nabla u\|_2 \left(\frac{1}{2} \|\nabla u\|_2 - S(N)  |\Omega|^\frac{1}{2} \|h\|_{\infty}\right) \to \infty \ \text{as} \ \|\nabla u\|_2 \to \infty$$
where $S(N)$ is the Sobolev constant. This implies the energy functional $\mathcal{I}_h$ is coercive. Moreover, $\mathcal{I}_h$ is a $C^1$  and convex energy functional. Thus, $\mathcal{I}_h$ is weakly lower semi-continuous. Combining all the above properties of $\mathcal{I}_h$, there exists a minimizer $u \in H_0^1(\Omega)$ and which is also a critical point of $\mathcal{I}_h$ {\it i.e.} 
\begin{equation}\label{def-approx}
\begin{split}
    \int_{\Omega} \nabla u \cdot \nabla \psi ~dx + \frac{C(N,s)}{2} \int_{\mathbb{R}^N} \int_{\mathbb{R}^N} \frac{(u(x)- u(y))(\psi(x)- \psi(y))}{|x-y|^{N+2s}} ~dx ~dy = \int_{\Omega} h \psi ~dx \quad \forall \psi \in H_0^1(\Omega).
\end{split}
\end{equation}
By taking $\psi= u^-:= \min\{u, 0\} \in H_0^1(\Omega)$ as a test function in \eqref{def-approx} and using the fact that $h \geq 0$ and $$(u(x)-u(y))(u^-(x)- u^-(y)) \geq 0 \quad \text{for all} \ (x,y) \in \mathbb{R}^N \times \mathbb{R}^N,$$
we obtain
$$\int_{\Omega} |\nabla u^-|^2 ~dx \leq 0.$$
Hence, $u \geq 0$ a.e. in $\Omega.$ Now, we show that the problem $(S)$ has a unique solution. Let $u_1, u_2 \in H_0^1(\Omega)$ be two solution of the problem $(S)$. Therefore, for all $\psi \in H_0^1(\Omega)$, we have
\begin{equation}\label{def-approx-1}
\begin{split}
    \int_{\Omega} \nabla u_1 \cdot \nabla \psi ~dx + \frac{C(N,s)}{2} \int_{\mathbb{R}^N} \int_{\mathbb{R}^N} \frac{(u_1(x)- u_1(y))(\psi(x)- \psi(y))}{|x-y|^{N+2s}} ~dx ~dy = \int_{\Omega} h \psi ~dx,
\end{split}
\end{equation}
\begin{equation}\label{def-approx-2}
\begin{split}
    \int_{\Omega} \nabla u_2 \cdot \nabla \psi ~dx + \frac{C(N,s)}{2} \int_{\mathbb{R}^N} \int_{\mathbb{R}^N} \frac{(u_2(x)- u_2(y))(\psi(x)- \psi(y))}{|x-y|^{N+2s}} ~dx ~dy = \int_{\Omega} h \psi ~dx.
\end{split}
\end{equation}
By subtracting the above two equations and inserting $\psi= u_1-u_2$, we obtain
\begin{equation*}
\begin{split}
    \int_{\Omega} |\nabla (u_1- u_2)|^2  ~dx + \frac{C(N,s)}{2} \int_{\mathbb{R}^N} \int_{\mathbb{R}^N} \frac{|(u_1- u_2)(x)- (u_1- u_2)(y)|^2}{|x-y|^{N+2s}} ~dx ~dy =0.
\end{split}
\end{equation*}
which further gives $u_1=u_2$ a.e. in $\Omega.$
The boundedness and regularity of solution follows by employing the classical method of Stampacchia see for e.g \cite[Theorem 4.7]{Biagi_Valdi} and using \cite[Theorem 2.7]{Biagi_Valdi-1}. Since $h \not\equiv 0$, we have $u \not \equiv 0$. Finally, the strong maximum principle in \cite[Theorem 3.1]{Biagi_Vecchi}, implies $u> 0$ in $\Omega.$ 
\end{proof}
Depending upon the class of weight function $f$, we consider a sequence of increasing function $f_n$ such that $f_n \to f$ $a.e.$ in  $\Omega$. In the first case, when $f \in L^r(\Omega)$, we consider $f_n(x):=\min\{f(x), n\}$ and in second case, when $f \in \mathcal{A}_\zeta$, we consider
	\begin{equation*}
	f_n(x):=
	 \left\{
	\begin{array}{ll}
	\left(f^{\frac{-1}{\zeta}}(x) + \left(\frac{1}{n}\right)^\frac{\gamma+1}{2-\zeta}\right)^{-\zeta} & \text{ if } \ x \in \Omega , \vspace{0.1cm}\\
	0 & \text{else},
	\end{array} 
	\right.
	\end{equation*}
and there exist positive constants $\mathcal{G}_1, \mathcal{G}_2>0$ 	such that, for any $x \in \Omega$
\begin{equation}\label{weight:approx}
\frac{\mathcal{G}_1}{\left(d(x) + \left(\frac{1}{n}\right)^\frac{\gamma+1}{2-\zeta}\right)^{\zeta}}  \leq f_n(x) \leq   \frac{\mathcal{G}_2}{\left(d(x) + \left(\frac{1}{n}\right)^\frac{\gamma+1}{2-\zeta}\right)^{\zeta}}.
\end{equation}
By considering the above choice of $f_n$, we study the following approximated singular problem
\begin{equation}\label{sing:approx} \tag{$P_n$} \left\{
\begin{aligned}
-\Delta u + (-\Delta)^s u& =\frac{f_n}{(u+\frac{1}{n})^\gamma}, \quad u>0 \quad &&\text{ in }  \Omega, \\
u & = 0 &&  \text{ in } \ \mathbb{R}^N \setminus \Omega .
\end{aligned}
\right.
\end{equation}
where $\{f_n\}_{n \in \mathbb{N}}$ is a bounded increasing sequence such that $f_n \to f$ in $L^r(\Omega)$ for $r \in [1, \infty)$ and $f_n=f$ when $r=+\infty.$
\begin{Lem}\label{Lem:apriori}
For any $n \in \mathbb{N}$ and $\gamma \geq 0$, there exists a unique non-negative weak solution $u_n \in H_0^1(\Omega)$ of the problem $(P_n)$ in the sense that
\begin{equation}\label{def:notion:approx}
\begin{split}
    \int_{\Omega} \nabla u_n \cdot \nabla \psi ~dx + \frac{C(N,s)}{2} \int_{\mathbb{R}^N} \int_{\mathbb{R}^N} &\frac{(u_n(x)- u_n(y))(\psi(x)- \psi(y))}{|x-y|^{N+2s}} ~dx ~dy \\
    &= \int_{\Omega} \frac{f_n(x)}{(u_n + \frac{1}{n})^\gamma} \psi ~dx \quad \forall \psi \in H_0^1(\Omega).
\end{split}
\end{equation}
Moreover,
\begin{itemize}
    \item[(i)] The solution $u_n \in C^{1, \beta}(\overline{\Omega}) \cap L^\infty(\Omega)$ for every $\beta \in (0,1)$ and $u_n >0$ in $\Omega.$ Moreover, when $f_n \in C^{\alpha}(\overline{\Omega})$, then $u_n \in C^{2, \delta}(\overline{\Omega}) \cap C(\overline{\Omega})$ for some $\delta \in (0,1).$
    \item[(ii)] The sequence $\{u_n\}_{n \in \mathbb{N}}$ is monotonically increasing in the sense that $u_{n+1} \geq u_n$ for all $n \in \mathbb{N}.$ 
    \item[(iii)] For every compact set $K \Subset \Omega$ and $n \in \mathbb{N}$, there exists a constant $C$ depending upon $K$ and independent of $n$ such that $u_n \geq C >0.$
\end{itemize}
\end{Lem}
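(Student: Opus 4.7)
The plan is to fix $n$ and argue by a Schauder fixed-point argument using Lemma \ref{lem:prelimi} as the solution operator for the linear mixed problem, then establish the four bulleted properties in turn. First I would define a map $T : H_0^1(\Omega) \to H_0^1(\Omega)$ by letting $T(v)$ be the unique solution furnished by Lemma \ref{lem:prelimi} of
\begin{equation*}
\mathcal{L} w \;=\; \frac{f_n(x)}{(|v|+\tfrac{1}{n})^{\gamma}} \quad \text{in } \Omega, \qquad w=0 \text{ in } \mathbb{R}^N\setminus\Omega,
\end{equation*}
which is legitimate because the right-hand side lies in $L^\infty(\Omega)$ (bounded by $n^{\gamma}\|f_n\|_\infty$, and $f_n$ is bounded in both the Lebesgue and singular-weight cases thanks to the explicit cut-off/truncation definitions). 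Testing the equation for $T(v)$ by $T(v)$ itself produces an a priori bound $\|T(v)\|_{H_0^1} \le C(n)$ independent of $v$; combining this with Rellich compactness $H_0^1 \hookrightarrow L^2$ and the continuity of $v \mapsto f_n(|v|+1/n)^{-\gamma}$ from $L^2(\Omega)$ into $L^\infty(\Omega)$ (since $(|v|+1/n)^{-\gamma}\le n^{\gamma}$), one sees that $T$ is continuous and compact on the closed ball of radius $C(n)$ in $L^2(\Omega)$. Schauder's fixed-point theorem then yields $u_n = T(u_n) \in H_0^1(\Omega)$, and the choice $\psi = u_n^-$ in the weak form together with the familiar inequality $(u_n(x)-u_n(y))(u_n^-(x)-u_n^-(y))\ge 0$ forces $u_n \ge 0$ a.e.

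For uniqueness, if $u_n^{(1)}, u_n^{(2)}$ are two solutions I would subtract their equations and test with $(u_n^{(1)}-u_n^{(2)})^+ \in H_0^1(\Omega)$; the left-hand side yields the nonnegative quantity $\|\nabla(u_n^{(1)}-u_n^{(2)})^+\|_{L^2}^2$ plus the Gagliardo seminorm of $(u_n^{(1)}-u_n^{(2)})^+$, while the right-hand side is nonpositive by the strict monotonicity of $t \mapsto (t+\tfrac{1}{n})^{-\gamma}$, forcing $u_n^{(1)} \le u_n^{(2)}$ (and by symmetry equality). The regularity statement in (i) then follows directly from Lemma \ref{lem:prelimi}: since $f_n(u_n+1/n)^{-\gamma} \in L^\infty(\Omega)$, the Stampacchia argument of \cite[Thm 4.7]{Biagi_Valdi} gives $u_n \in L^\infty(\Omega) \cap C^{1,\beta}(\overline{\Omega})$, and the strong maximum principle \cite[Thm 3.1]{Biagi_Vecchi} upgrades $u_n \ge 0$ to $u_n > 0$ in $\Omega$. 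The $C^{2,\delta}$ regularity when $f_n \in C^{\alpha}(\overline{\Omega})$ follows from classical Schauder theory applied to the rewriting $-\Delta u_n = f_n(u_n+1/n)^{-\gamma} - (-\Delta)^s u_n$ whose right-hand side is Hölder continuous.

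For the monotonicity (ii), note that $f_{n+1}\ge f_n$ and $\frac{1}{n+1}<\frac{1}{n}$ imply
\begin{equation*}
\frac{f_{n+1}}{(u_{n+1}+\tfrac{1}{n+1})^{\gamma}} \;\ge\; \frac{f_n}{(u_{n+1}+\tfrac{1}{n})^{\gamma}},
\end{equation*}
so $u_{n+1}$ is a supersolution of the problem solved by $u_n$ with $1/n$ in the denominator; subtracting the equation for $u_n$ from this supersolution inequality and testing with $(u_n - u_{n+1})^+\in H_0^1(\Omega)$, the right-hand side is again nonpositive by monotonicity of $t\mapsto t^{-\gamma}$, yielding $u_{n+1}\ge u_n$. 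Finally, for (iii), since $\{u_n\}$ is increasing, $u_n \ge u_1$ pointwise; and because $u_1 \in C(\overline{\Omega})$ with $u_1>0$ in $\Omega$, on every compact $K \Subset \Omega$ the constant $C(K):=\min_K u_1 > 0$ gives the claimed uniform bound. The main technical delicacy I anticipate is verifying that the fixed-point map is well-posed in the second (singular-weight) setting, but the explicit bound \eqref{weight:approx} guarantees $f_n \in L^\infty(\Omega)$ for each fixed $n$, so the argument goes through uniformly.
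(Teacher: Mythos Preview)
Your approach matches the paper's almost exactly: Schauder fixed point via the linear solver of Lemma~\ref{lem:prelimi}, nonnegativity, regularity from the cited Stampacchia/Biagi--Valdinoci results and the strong maximum principle, monotonicity via testing with $(u_n-u_{n+1})^+$, and the uniform lower bound on compacta from $u_1$. One small correction: the Nemytskii map $v\mapsto f_n(|v|+1/n)^{-\gamma}$ is \emph{not} continuous from $L^2(\Omega)$ into $L^\infty(\Omega)$ (the pointwise bound $\le n^\gamma\|f_n\|_\infty$ gives boundedness, not $L^\infty$ convergence); what you actually need---and what the paper does---is continuity into $L^p$ for finite $p$ via a.e.\ convergence along subsequences and dominated convergence, which then yields $T(v_k)\to T(v)$ in $H_0^1$ by testing the difference equation with $T(v_k)-T(v)$. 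Apart from this misstatement the argument is sound, and your explicit uniqueness step (testing with $(u_n^{(1)}-u_n^{(2)})^+$ and using the strict monotonicity of $t\mapsto (t+1/n)^{-\gamma}$) is in fact a detail that the paper's own proof does not spell out.
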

\begin{proof}
Given $g \in L^2(\Omega)$ and $n \in \mathbb{N}$ fixed, set $$h:= \frac{f_n}{\left(g^+ + \frac{1}{n}\right)^\gamma}.$$ Then, in view of Lemma \ref{lem:prelimi}, there exists a unique positive bounded solution $w \in H_0^1(\Omega)$ for the problem $(S)$ (see statement of Lemma \ref{lem:prelimi}) with $h$ defined above. Therefore, we define a operator $T: L^{2}(\Omega) \to L^2(\Omega)$ such that $T(g)=w$ where $w$ satisfies
\begin{equation}\label{def:notion:approx-1}
\begin{split}
    \int_{\Omega} \nabla w \cdot \nabla \psi ~dx + \frac{C(N,s)}{2} \int_{\mathbb{R}^N} \int_{\mathbb{R}^N} \frac{(w(x)- w(y))(\psi(x)- \psi(y))}{|x-y|^{N+2s}} ~dx ~dy = \int_{\Omega} \frac{f_n(x)}{(g^+ + \frac{1}{n})^\gamma} \psi ~dx \quad \forall \psi \in H_0^1(\Omega).
\end{split}
\end{equation}
Using $w$ as a test function in \eqref{def:notion:approx-1} and using Sobolev's embedding, we obtain
 \begin{equation}\label{est:apri}
 \int_{\Omega} |\nabla w|^2 ~dx \leq  \int_{\Omega} \frac{f_n(x) w}{(g^+ + \frac{1}{n})^\gamma} ~dx \leq R \|\nabla w\|_{2} \Longrightarrow 
  \|\nabla w\|_{2} \leq R
 \end{equation}
where $R:= S(N) \|f_n\|_{L^\infty(\Omega)} n^\gamma |\Omega|^\frac{1}{2}$. This implies that the ball $B(0,R)$ of radius $R$ in $H_0^1(\Omega)$ is invariant under the action of the map $T$. Now, we prove the continuity and compactness of the map $T: H_0^1(\Omega) \to H_0^1(\Omega)$ in order to apply the Schauder's fixed point theorem. For continuity, we claim that $w_k \to w$ in $H_0^1(\Omega)$ when $h_k \to h$ in $H_0^1(\Omega)$ with $w_k= T(h_k)$ and $w= T(h).$ Considering the corresponding the sequence $\{w\}_{k \in \mathbb{N}}$ and choosing $w_k-w$ as a test function, we get
\begin{equation}\label{est:passage}
\begin{split}
    \int_{\Omega} |\nabla w_k-w|^2 ~dx &= \int_{\Omega} \left(\frac{f_n(x)}{(h_k^+ + \frac{1}{n})^\gamma} - \frac{f_n(x)}{(h^+ + \frac{1}{n})^\gamma}\right) (w_k-w) ~dx\\
    & \leq S(N) \left(\int_{\Omega} \left(\frac{f_n(x)}{(h_k^+ + \frac{1}{n})^\gamma} - \frac{f_n(x)}{(h^+ + \frac{1}{n})^\gamma}\right)^{\frac{2N}{N+2}} ~dx\right)^\frac{N+2}{2N}\|\nabla w_k -\nabla w\|_{2}.
\end{split}
\end{equation}
Notice that, the integrand in the first term is dominated by $2 \|f_n\|_{\infty} n^\gamma$ and converge to $0$ a.e. in $\Omega$, then by applying dominated convergence theorem we obtain our claim. For compactness, let $h_k$ be a bounded sequence in $H_0^1(\Omega)$ and for $w_k= T(h_k)$, we claim that  $w_k \to w$ in $H_0^1(\Omega)$ up to a subsequence for some $w \in H_0^1(\Omega).$ In light of \eqref{est:apri}, both $w_k$ and $h_k$ are bounded in $H_0^1(\Omega)$, then up to a subsequence we have $$w_k \rightharpoonup w \ \text{in} \ H_0^1(\Omega), \ w_k \to w \ \text{ in} \ L^p(\Omega) \ \text{ for any} \ 1 \leq p < \frac{2N}{N-2} \ \text{and} \ h_k \to h \ \text{a.e. in} \ \Omega.$$
We also know that, $w_k$ satisfies: for any $\psi \in H_0^1(\Omega)$
\begin{equation}\label{est:apop}
\begin{split}
    \int_{\Omega} \nabla w_k \cdot \nabla \psi ~dx + \frac{C(N,s)}{2} \int_{\mathbb{R}^N} \int_{\mathbb{R}^N} \frac{(w_k(x)- w_k(y))(\psi(x)- \psi(y))}{|x-y|^{N+2s}} ~dx ~dy = \int_{\Omega} \frac{f_n(x)}{(h_k^+ + \frac{1}{n})^\gamma} \psi ~dx.
\end{split}
\end{equation}
Now to pass limits $k \to \infty$, we observe that
$$\frac{(w_k(x)- w_k(y))(\psi(x)- \psi(y))}{|x-y|^{\frac{N+2s}{2}}} \ \text{in uniformly bounded in} \ L^2(\mathbb{R}^N \times \mathbb{R}^N)$$
because of Sobolev's embedding and by the pointwise convergence of $w_k$ to $w$, we have
$$\frac{(w_k(x)- w_k(y))(\psi(x)- \psi(y))}{|x-y|^{\frac{N+2s}{2}}} \to  \frac{(w(x)- w(y))(\psi(x)- \psi(y))}{|x-y|^{\frac{N+2s}{2}}} \ \text{a.e. in} \ \Omega.$$
Then, since 
$$\frac{(\psi(x)- \psi(y))(\psi(x)- \psi(y))}{|x-y|^{\frac{N+2s}{2}}} \in L^2 (\mathbb{R}^N \times \mathbb{R}^N),$$ it is easy to see the passage of limit in \eqref{est:apop} to
\[
\int_{\Omega} \nabla w \cdot \nabla \psi ~dx + \frac{C(N,s)}{2} \int_{\mathbb{R}^N} \int_{\mathbb{R}^N} \frac{(w(x)- w(y))(\psi(x)- \psi(y))}{|x-y|^{N+2s}} ~dx ~dy = \int_{\Omega} \frac{f_n(x)}{(h^+ + \frac{1}{n})^\gamma} \psi ~dx.
\]
Now, by arguing as in \eqref{est:passage}, we get $w_k \to w$ in $H_0^1(\Omega)$ and Schauder's fixed point theorem implies the existence of a fixed points $u_n$ such that $T(u_n)= u_n$ for all $n \in \mathbb{N}$ {\it i.e.}
\[
\int_{\Omega} \nabla u_n \cdot \nabla \psi ~dx + \frac{C(N,s)}{2} \int_{\mathbb{R}^N} \int_{\mathbb{R}^N} \frac{(u_n(x)- u_n(y))(\psi(x)- \psi(y))}{|x-y|^{N+2s}} ~dx ~dy = \int_{\Omega} \frac{f_n(x)}{(u_n^+ + \frac{1}{n})^\gamma} \psi ~dx.
\]

The boundedness and regularity of solution follows by employing the classical method of Stampacchia see for e.g \cite[Theorem 4.7]{Biagi_Valdi} and using \cite[Theorem 2.7]{Biagi_Valdi-1} {\it i.e.} $u_n \in L^\infty(\Omega) \cap C^{1, \beta}(\overline{\Omega})$. Since $\frac{f_n}{\left(u^+_n + \frac{1}{n} \right)^\gamma} \not\equiv 0$, we have $u_n \not \equiv 0$. Finally, the strong maximum principle in \cite[Theorem 3.1]{Biagi_Vecchi}, implies $u_n> 0$ in $\Omega$ and $u_1 \geq C(K) >0$ for every $K$ compact subset of $\Omega.$ To prove the monotonicity, let $u_n$ and $u_{n+1}$ are positive solutions of the problem $(P_n)$ and $(P_{n+1})$ respectively {\it i.e.} for any $\psi \in H_0^1(\Omega)$, we have 
\[
\int_{\Omega} \nabla u_n \cdot \nabla \psi ~dx + \frac{C(N,s)}{2} \int_{\mathbb{R}^N} \int_{\mathbb{R}^N} \frac{(u_n(x)- u_n(y))(\psi(x)- \psi(y))}{|x-y|^{N+2s}} ~dx ~dy = \int_{\Omega} \frac{f_n(x)}{(u_n + \frac{1}{n})^\gamma} \psi ~dx
\]
and
\[
\int_{\Omega} \nabla u_{n+1} \cdot \nabla \psi ~dx + \frac{C(N,s)}{2} \int_{\mathbb{R}^N} \int_{\mathbb{R}^N} \frac{(u_{n+1}(x)- u_{n+1}(y))(\psi(x)- \psi(y))}{|x-y|^{N+2s}} ~dx ~dy = \int_{\Omega} \frac{f_{n+1}(x)}{(u_{n+1} + \frac{1}{n+1})^\gamma} \psi ~dx.
\]
Subtracting the above equalities by taking the test function $\psi= (u_n - u_{n+1})^+$ and using the following inequality (\cite[Lemma 9]{Lind_lind}): $\ \text{for a.e.} \ (x,y) \in \mathbb{R}^N \times \mathbb{R}^N$
$$\left(((u_n-u_{n+1})(x)- (u_n-u_{n+1})(y)) ((u_n-u_{n+1})^+(x)- (u_n-u_{n+1})^+(y))\right) \geq 0,$$
we get
\begin{equation*}
    \begin{split}
        \int_{u_n \geq u_{n+1}} |\nabla (u_n - u_{n+1})|^2 ~dx &\leq \int_{\Omega} \left( \frac{f_{n}(x)}{(u_{n} + \frac{1}{n})^\gamma} -\frac{f_{n+1}(x)}{\left(u_{n+1} + \frac{1}{n+1}\right)^\gamma}\right) (u_n - u_{n+1})^+ ~dx\\
        & \leq \int_{\Omega} f_{n+1}(x)  \frac{(u_{n} + \frac{1}{n})^\gamma- (u_{n+1} + \frac{1}{n+1})^\gamma}{\left((u_{n} + \frac{1}{n})(u_{n+1} + \frac{1}{n+1})\right)^\gamma} (u_n - u_{n+1})^+ ~dx \leq 0
    \end{split}  
\end{equation*}
which in turn implies that $u_n \leq u_{n+1}$ in $\Omega$ and $u_n \geq C(K)$ for every $n \in \mathbb{N}$ and $K \Subset \Omega.$
\end{proof}
Now, we derive the lower boundary behavior of the approximated sequence $u_n$ with the help of integral representation of the solution via Green's operator.
\begin{thm}\label{Green:thm:1}
Let $u_n$ be the weak solution of the problem $(P_n)$ obtained in Lemma \ref{Lem:apriori}, then there exist a constant $C_0>0$ independent of $n$ and $x\in \Omega$ such that $C_0 \delta(x) \leq u_n(x)$.
\end{thm}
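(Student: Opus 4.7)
The plan is to combine the Green's representation formula for $u_n$ with the monotonicity $u_{n+1}\ge u_n$ from Lemma~\ref{Lem:apriori}(ii) and the sharp two-sided bounds on the Green's kernel of $\mathcal{L}$ from \cite{Chenkim_Green1, Chenkim_Green2}. The key simplification is to prove the lower bound only for the first iterate $u_1$, which is uniformly controlled in $L^\infty$, and then transfer it to every $u_n$ by monotonicity, thereby avoiding the need for an $n$-independent $L^\infty$ bound on the whole sequence.

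First, since $u_n\in L^\infty(\Omega)\cap C^{1,\beta}(\overline{\Omega})$ and the right-hand side $F_n:=f_n/(u_n+1/n)^\gamma$ belongs to $L^\infty(\Omega)$, the regularity theory for $\mathcal{L}$ combined with the Green kernel $G$ of $\mathcal{L}$ with zero Dirichlet data outside $\Omega$ yields the representation
$$u_n(x)=\int_\Omega G(x,y)\,F_n(y)\,dy,\qquad x\in\Omega.$$
Next, since $f\ge 0$ and $f\not\equiv 0$, I can select a ball $B\Subset\Omega$ and a constant $\varepsilon_0>0$ such that $f_1\ge\varepsilon_0$ on $B$ (this works in both the Lebesgue and the singular-weight settings, as $f_1$ is bounded below by a positive constant on a set of positive measure in each case). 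Using the uniform bound $\|u_1\|_{L^\infty(\Omega)}\le M$ from Lemma~\ref{Lem:apriori}(i), one deduces
$$F_1(y)\ge\frac{\varepsilon_0}{(M+1)^\gamma}=:c_0>0\quad\text{for } y\in B,\qquad\text{hence}\qquad u_1(x)\ge c_0\int_B G(x,y)\,dy.$$

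The core of the argument is then to invoke the lower bound on the Green's kernel of $\mathcal{L}$ established in \cite{Chenkim_Green1, Chenkim_Green2}, which in the interior regime yields $G(x,y)\gtrsim\delta(x)\delta(y)$ uniformly for $y$ in any fixed compact subset of $\Omega$ and $x\in\Omega$. Since $\delta(y)\gtrsim\dist(B,\partial\Omega)>0$ for $y\in B$, integrating gives $\int_B G(x,y)\,dy\gtrsim\delta(x)$, and so $u_1(x)\ge C_0\,\delta(x)$ for some $C_0>0$ depending only on $\Omega$, $f$, $\gamma$ and structural constants of $\mathcal{L}$. Applying Lemma~\ref{Lem:apriori}(ii) finally gives $u_n(x)\ge u_1(x)\ge C_0\,\delta(x)$ for every $n\in\mathbb{N}$.

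The main obstacle is the Green estimate itself: one must extract the clean boundary-scaling lower bound $G(x,y)\gtrsim\delta(x)\delta(y)$ from the somewhat intricate two-sided bounds in \cite{Chenkim_Green1, Chenkim_Green2}, which combine both a local Newtonian scale and a nonlocal Riesz-type scale. Once this auxiliary estimate is isolated, the remainder of the proof is a single integration together with the already-proved monotonicity.
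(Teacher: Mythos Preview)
Your argument is correct and follows the same skeleton as the paper's proof: write $u_n=\mathbb{G}^\Omega[F_n]$, establish the bound for $u_1$ only (exploiting $\|u_1\|_{L^\infty}<\infty$), and then invoke the monotonicity $u_n\ge u_1$ from Lemma~\ref{Lem:apriori}(ii). The difference lies in the lower-bound mechanism. The paper routes the estimate through the first eigenpair, writing $\delta\lesssim\varphi_1=\lambda_1\mathbb{G}^\Omega[\varphi_1]$ and then dominating $\varphi_1$ pointwise by a constant multiple of $f_1/(\|u_1\|_\infty+1)^\gamma$ inside the Green operator; this last step uses that $f_1$ is bounded below on all of $\Omega$, which is built into the singular-weight approximation $f_n\asymp(\delta+n^{-(1+\gamma)/(2-\zeta)})^{-\zeta}$. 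You instead localise to a set where $f_1\ge\varepsilon_0$ and use the Hopf-type pointwise bound $G(x,y)\gtrsim\delta(x)\delta(y)$, which is precisely Lemma~\ref{Hopf} in the appendix. Your route is slightly more direct and, unlike the paper's written chain of inequalities, does not require a global positive lower bound on $f_1$, so it covers the Lebesgue-weight case $f\in L^r(\Omega)$ without modification.

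One small imprecision: for a general non-negative $f\in L^r(\Omega)\setminus\{0\}$ you cannot always find a \emph{ball} $B\Subset\Omega$ on which $f_1\ge\varepsilon_0$; the set $\{f\ge\varepsilon_0\}$ need not have interior. What you do get is a measurable set $E\Subset\Omega$ of positive measure with $f_1\ge\varepsilon_0$ on $E$, and this suffices: the Hopf bound gives $u_1(x)\ge c_0\,\mathbb{G}^\Omega[\chi_E](x)\ge c_0 C\,\delta(x)\int_E\delta(y)\,dy\gtrsim\delta(x)$, since $\int_E\delta>0$. Replace ``ball'' by ``set of positive measure compactly contained in $\Omega$'' and the proof is complete.
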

\begin{proof}
Let $u_n$ be the sequence of weak solution of the problem $(P_n)$ and $G(x,y)$ the Green function associated to the mixed operator with homogeneous Dirichlet boundary conditions in $\Omega.$ Then, using the integral representation of the solution, we have
\begin{equation}\label{int:rep}
 u_n(x) := \mathbb{G}^\Omega\left[\frac{f_n(y)}{(u_n + \frac{1}{n})^\gamma}\right](x)= \int_{\Omega} \frac{G(x,y)  f_n(y)}{(u_n + \frac{1}{n})^\gamma} ~dy \quad \text{for} \ n \in \mathbb{N}.   
\end{equation}
From Lemma \ref{Lem:apriori}, we know that $\{u_n\}_{n \in \mathbb{N}}$ is an increasing sequence such that $u_n \in L^\infty(\Omega)$. Using this fact for any $n \in \mathbb{N}$ and $x \in \Omega$, we deduce that
\begin{equation}\label{est:appr:solu:small:lower}
	\begin{split}
	\delta(x) \lesssim \varphi_1(x) &= \lambda_1 \mathbb{G}^\Omega[\varphi_1](x) = \lambda_1 \mathbb{G}^\Omega\left[\varphi_1 \frac{(\|u_1\|_{L^\infty(\Omega)}+1)^\gamma}{(\|u_1\|_{L^\infty(\Omega)}+1)^\gamma}\right](x)\\
	& \leq \lambda_1 \|\varphi_1\|_{L^\infty(\Omega)} (\|u_1\|_{L^\infty(\Omega)}+1)^\gamma (\mathcal{D}_\Omega +1)^\zeta \mathbb{G}^\Omega\left[ \frac{f_1(x)}{(\|u_1\|_{L^\infty(\Omega)}+1)^\gamma}\right](x)\\
	& \leq C \mathbb{G}^\Omega\left[ \frac{f_1(x)}{(u_1+1)^\gamma}\right](x) \lesssim u_1(x) \leq u_n(x).
	\end{split} 
	\end{equation}
\end{proof}
 \section{Uniform apriori estimates}\label{Sec:apri}
\subsection{Lebesgue weights: Sobolev regularity estimates}
\begin{Lem}\label{lem:apri:weak}
Assume that $f \in L^r(\Omega) \setminus \{0\}$ is non-negative and $(r, \gamma) \in \mathcal{P}_{r, \gamma}$. Let $u_n$ be the weak solution of the problem $(P_n)$.
\begin{itemize}
    \item[(i)] For $r \in [1, \frac{N}{2})$ and $N > 2$, 
    \begin{equation*}
	\text{$u_n^{\frac{\mathfrak{S}_r+1}{2}}$ is uniformly bounded in $H_0^1(\Omega)$ with \ $\mathfrak{S}_r :=\frac{N(r-1) + \gamma r(N-2)}{N-2r}$}.
	\end{equation*}
	Moreover, $u_n$ is uniformly bounded in  $L^{\sigma_r}(\Omega)$ with $\sigma_r:= \frac{Nr(1+\gamma)}{N-2r}.$
    \item[(ii)] For $r= \frac{N}{2}$ and $N \geq 2$. Then 
    \begin{enumerate}
    \item   For $0 \leq \gamma \leq 1$ and $\beta >0 $ satisfying \eqref{range:est},
    $$\mathfrak{H} \left(\frac{u_n}{2}\right) \ \text{is uniformly bounded in} \  H_0^1(\Omega) \quad \text{where} \quad \mathfrak{H}(t):= \exp\left(\beta t\right)-1$$
and there exist constants  $C_1, C_2$ depending upon $\beta, S(N),f, |\Omega|$ but independent of $n$ such that  $$\int_{\Omega} \exp\left(\frac{ \beta N u_n}{N-2}\right) \leq C_1 \quad \text{when} \ N >2 \quad \text{and} \quad \|u_n\|_{L^\infty(\Omega)} \leq C_2 \quad  \text{when} \ N =2.$$
        \item  For $\tau \geq \gamma > 1$ and $\alpha >0 $ such that $\frac{\alpha}{2^{3-2\tau}} \max\{1, \alpha^\tau\}< \frac{1}{\tau S(N) \|f\|_{\frac{N}{2}}}$ 
    $$\mathfrak{D} \left(\frac{u_n}{2}\right) \ \text{is uniformly bounded in} \  H_0^1(\Omega) \quad \text{where} \quad \mathfrak{D}(t) = \left(\exp(\alpha t) -1\right)^{\tau}$$
and there exist constants  $C_3, C_4$ depending upon $\alpha, \tau, S(N),f, |\Omega|$ but independent of $n$ such that  $$\int_{\Omega} \exp\left(\frac{ 2N \alpha \tau u_n}{N-2}\right) \leq C_3 \quad  \text{when} \ N >2 \quad \text{and} \quad \|u_n\|_{L^\infty(\Omega)} \leq C_4 \quad  \text{when} \ N =2.$$
    \end{enumerate}
In particular, $u_n$ is uniformly bounded in $L^s(\Omega)$ for every $s \in [1, \infty)$ and $N >2.$
    \item[(iii)] For $r > \frac{N}{2}$ and $N \geq 2$, $u_n$ is uniformly bounded in  $L^\infty(\Omega).$
    \item[(iv)]  For $r \in [1, r^\sharp)$, $0 \leq \gamma < 1$ and $N > 2$, $u_n$ is uniformly bounded in $W_0^{1, q}(\Omega)$ with $q:= \frac{Nr(1+\gamma)}{N-r(1-\gamma)}$ and for any $r \in [1, \infty]$, $\gamma=1$ or $r \geq r^\sharp$ and $0 \leq \gamma < 1$, $u_n$ is uniformly bounded in $H_0^1(\Omega).$  
    \end{itemize}
\end{Lem}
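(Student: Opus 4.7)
My plan is to derive all four regimes by a common test-function strategy applied to the weak formulation \eqref{def:notion:approx} of $(P_n)$. Two structural observations drive everything. First, for any non-decreasing $\phi:\mathbb{R}\to\mathbb{R}$, the pointwise inequality $(u_n(x)-u_n(y))(\phi(u_n)(x)-\phi(u_n)(y))\geq 0$ shows that the nonlocal bilinear form on the left of \eqref{def:notion:approx} is non-negative when we test with $\psi=\phi(u_n)$, and hence may simply be dropped whenever we seek upper bounds on the local gradient energy. Second, the local term collapses to $\int \phi'(u_n)|\nabla u_n|^2 = \|\nabla \Phi(u_n)\|_{L^2(\Omega)}^2$ with $\Phi$ a primitive of $\sqrt{\phi'}$, which is precisely the $H^1_0$-seminorm of the power or exponential composition we wish to control. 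Since such compositions of $u_n$ need not themselves lie in $H^1_0(\Omega)$ a priori, I will use the truncation $T_k(u_n)=\min(u_n,k)\in H^1_0(\Omega)\cap L^\infty(\Omega)$ (admissible by Lemma \ref{Lem:apriori}) and pass $k\to\infty$ by monotone convergence.

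For part (i) I test with $T_k(u_n)^{\mathfrak{S}_r}$ and pass $k\to\infty$ to obtain
\[
\frac{4\mathfrak{S}_r}{(\mathfrak{S}_r+1)^2}\|\nabla u_n^{(\mathfrak{S}_r+1)/2}\|_{L^2(\Omega)}^2 \leq \int_\Omega \frac{f_n}{(u_n+\tfrac{1}{n})^\gamma}\,u_n^{\mathfrak{S}_r}\,dx \leq \int_\Omega f\, u_n^{\mathfrak{S}_r-\gamma}\,dx.
\]
Hölder with exponents $r,r'$ and Sobolev applied to $u_n^{(\mathfrak{S}_r+1)/2}$ bound the right-hand side by a constant multiple of $\|f\|_{L^r}\|\nabla u_n^{(\mathfrak{S}_r+1)/2}\|_{L^2}^{2(\mathfrak{S}_r-\gamma)/(\mathfrak{S}_r+1)}$, provided the arithmetic identity $r'(\mathfrak{S}_r-\gamma)=2^{*}(\mathfrak{S}_r+1)/2$ holds, which is exactly the definition of $\mathfrak{S}_r$. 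Since the resulting exponent is strictly below $2$ (equivalent to $\gamma>-1$), the inequality closes and gives the uniform $H^1_0$-bound on $u_n^{(\mathfrak{S}_r+1)/2}$; Sobolev then yields the $L^{\sigma_r}$-bound on $u_n$ with $\sigma_r=2^{*}(\mathfrak{S}_r+1)/2$. Part (iv) is a corollary: for $r\geq r^\sharp$ we have $\mathfrak{S}_r\geq 1$ so (i) directly gives the $H^1_0$-bound; for $r<r^\sharp$ and $0\leq\gamma<1$, the splitting $|\nabla u_n|^q=\bigl[|\nabla u_n|^2(u_n+\tfrac{1}{n})^{-\gamma}\bigr]^{q/2}\bigl[(u_n+\tfrac{1}{n})^{\gamma q/(2-q)}\bigr]^{(2-q)/2}$ and Hölder with exponents $2/q,2/(2-q)$ reduce matters to a weighted $H^1_0$-estimate (obtained by testing with $(u_n+\tfrac{1}{n})^{1-\gamma}-(\tfrac{1}{n})^{1-\gamma}$) and the $L^{\sigma_r}$-bound, with $q=\tfrac{Nr(1+\gamma)}{N-r(1-\gamma)}$ being exactly the value that aligns the Hölder exponents.

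Part (ii) is the analogous argument with exponential test functions. For weak singularities I test with $e^{\beta u_n}-1$, producing $\|\nabla \mathfrak{H}(u_n/2)\|_{L^2(\Omega)}^2$ on the left; at the critical summability $r=N/2$, Hölder and the Sobolev embedding $H^1_0\hookrightarrow L^{2^{*}}$ convert the right-hand side into $\kappa\|\nabla \mathfrak{H}(u_n/2)\|_{L^2}^2$ with $\kappa$ an explicit function of $\beta,\gamma,S(N),\|f\|_{L^{N/2}}$, and \eqref{range:est} is precisely the condition $\kappa<1$ that permits absorption. For strong singularities I work with $\mathfrak{D}(t)=(e^{\alpha t}-1)^\tau$, $\tau\geq \gamma$, so that $\mathfrak{D}(t)\sim t^\tau$ near zero compensates the singular factor $(u_n+\tfrac{1}{n})^{-\gamma}$; the stated bound on $\alpha,\tau$ plays the role of \eqref{range:est} in the absorption. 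Exponential integrability of $u_n$ then follows by applying Sobolev to $\mathfrak{H}(u_n/2)$ or $\mathfrak{D}(u_n/2)$, and the $L^\infty$-bound when $N=2$ is the analogous consequence of the Trudinger--Moser inequality.

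For part (iii) I use Stampacchia's method: testing with $G_k(u_n)=(u_n-k)^+$ and using $u_n\geq k$ on $A_k:=\{u_n>k\}$ yields $\|\nabla G_k(u_n)\|_{L^2}^2\leq k^{-\gamma}\int_{A_k} f\,G_k(u_n)\,dx$; Sobolev together with Hölder (valid since $r>N/2$ makes $r'<(2^{*})'$) then gives a De Giorgi-type recursion $\int_{A_h}(u_n-h)\leq C(h-k)^{-\alpha}|A_k|^{1+\eta}$, whose iteration produces the uniform $L^\infty$-bound. The principal obstacle across all four parts is the sharp absorption in (ii): one must track the Sobolev constant \emph{exactly} (not merely up to multiplicative constants) to obtain the explicit conditions on $\beta,\alpha,\tau$ in \eqref{range:est} and in the strong-singularity statement, since these results lose content if the absorption constant $\kappa$ is not identified sharply.
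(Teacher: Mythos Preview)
Your strategy matches the paper's almost step for step: power test functions in Step~(i), exponential test functions in Step~(ii), Stampacchia level sets in Step~(iii), and a H\"older splitting of $|\nabla u_n|^q$ in Step~(iv). Two technical points, however, diverge from the paper and would not go through as written.

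First, your regularisation device for the test function in part~(i) is a truncation \emph{from above}, $T_k(u_n)=\min(u_n,k)$. Since $u_n\in L^\infty(\Omega)$ already (Lemma~\ref{Lem:apriori}), this is vacuous for large $k$, and the real obstruction is at $u_n=0$: when $\mathfrak{S}_r<1$ (which occurs throughout the regime $r<r^\sharp$, e.g.\ $\mathfrak{S}_1=\gamma$), the map $t\mapsto t^{\mathfrak{S}_r}$ is not Lipschitz at the origin, and with $u_n\asymp\delta$ near $\partial\Omega$ one has $|\nabla(u_n^{\mathfrak{S}_r})|\sim\delta^{\mathfrak{S}_r-1}\notin L^2$ once $\mathfrak{S}_r\le\tfrac12$. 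The paper instead tests with $(u_n+\epsilon)^{\mathfrak{S}}-\epsilon^{\mathfrak{S}}\in H^1_0(\Omega)$ and sends $\epsilon\to 0$ via Fatou; this is the standard fix and you should adopt it.

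Second, in part~(iv) your H\"older splitting uses the weight $(u_n+\tfrac1n)^{-\gamma}$, and you assert that $q$ is ``exactly the value that aligns the H\"older exponents''. It is not: the identity that actually holds is $\dfrac{(1-\mathfrak{S}_r)\,q}{2-q}=\sigma_r$ (this is \eqref{cont:expo} in the paper), whereas $\dfrac{\gamma\,q}{2-q}=\sigma_r$ fails in general. Correspondingly, the paper splits $|\nabla u_n|^q = \bigl(u_n^{\mathfrak{S}_r-1}|\nabla u_n|^2\bigr)^{q/2}\bigl(u_n^{\sigma_r}\bigr)^{(2-q)/2}$, so that the first factor is exactly $\|\nabla u_n^{(\mathfrak{S}_r+1)/2}\|_{L^2}^2$ from part~(i) and the second is the $L^{\sigma_r}$ bound. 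Your alternative weighted estimate, obtained by testing with $(u_n+\tfrac1n)^{1-\gamma}-(\tfrac1n)^{1-\gamma}$, produces on the right $\int f_n(u_n+\tfrac1n)^{1-2\gamma}$, which is not uniformly controlled for $\gamma>\tfrac12$. Replace $\gamma$ by $1-\mathfrak{S}_r$ in the splitting and everything closes as you intended.
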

\begin{proof}
Let $n \in \mathbb{N}$ and $u_n$ be the weak solution of the problem $(P_n)$ given by Lemma \ref{Lem:apriori}. To prove the uniform estimates for the sequence $\{u_n\}_{n \in \mathbb{N}}$, we divide the proof into four steps. \vspace{0.1cm}\\
\textbf{Step 1:} Since, $u_n \in L^\infty(\Omega) \cap H_0^1(\Omega)$ and positive, then for any $\epsilon>0$ and $\mathfrak{S} > 0$, $(u_n + \epsilon)^{\mathfrak{S}} - \epsilon^{\mathfrak{S}}$ belongs to $H_0^1(\Omega)$, therefore, an admissible test function in \eqref{def:notion:approx}. Taking it so for $\epsilon \in (0, \frac{1}{n})$ and $\mathfrak{S} \in [\gamma, \infty)$, it yields
\begin{equation}\label{est:apri-0}
\begin{split}
    \int_{\Omega} \nabla u_n \cdot \nabla (u_n + \epsilon)^{\mathfrak{S}} ~dx &+ \frac{C(N,s)}{2} \int_{\mathbb{R}^N} \int_{\mathbb{R}^N} \frac{(u_n(x)- u_n(y))((u_n + \epsilon)^{\mathfrak{S}}(x)- (u_n + \epsilon)^{\mathfrak{S}}(y))}{|x-y|^{N+2s}} ~dx ~dy \\
    &\leq \int_{\Omega} \frac{f_n(x)}{(u_n + \frac{1}{n})^\gamma} (u_n + \epsilon)^{\mathfrak{S}} ~dx \leq \int_{\Omega} f_n(x) (u_n + \epsilon)^{\mathfrak{S} - \gamma} ~dx.
\end{split}
\end{equation}
Passing $\epsilon \to 0$ in the above estimate via Fatou's theorem and using Lemma \ref{prelim:inequa} (i), we obtain
\begin{equation}\label{est:apri-1}
 \begin{split}
    \frac{4 \mathfrak{S}}{(\mathfrak{S} +1)^2}\int_{\Omega} |\nabla u_n^{\frac{\mathfrak{S} +1}{2}}|^2 ~dx &+ \frac{2 C(N,s) \mathfrak{S}}{(\mathfrak{S} +1)^2} \int_{\mathbb{R}^N} \int_{\mathbb{R}^N} \frac{((u_n^\frac{\mathfrak{S} +1}{2}(x)- u_n^\frac{\mathfrak{S} +1}{2}(y))^2}{|x-y|^{N+2s}} ~dx ~dy \\
    &\leq \int_{\Omega} f_n(x) u_n^{\mathfrak{S} - \gamma} ~dx.
\end{split}   
\end{equation}
In order to estimate the R.H.S. term of \eqref{est:apri-1}, we choose $\mathfrak{S} = \mathfrak{S}_r$ such that 
\begin{equation}\label{choice:1}
    \mathfrak{S}_1 = \gamma \quad \text{for} \ N \geq 2 \quad \text{and} \quad \frac{(\mathfrak{S}_r -\gamma)r}{(r-1)}= \frac{(\mathfrak{S}_r +1)}{2} \frac{2N}{N-2} \quad \text{for} \ 1 < r < \frac{N}{2}, \ N >2.
\end{equation}
By applying the H\"older inequality in view of the above choice of $\mathfrak{S}_r$ with $r>1$ and $N > 2$, we get
\begin{equation}\label{est:apri-2}
\begin{split}
 \int_{\Omega} f_n(x) u_n^{\mathfrak{S}_r - \gamma} ~dx &\leq \|f_n\|_{L^r(\Omega)} \left(\int_{\Omega} u_n^{(\mathfrak{S}_r - \gamma) r'} ~dx\right)^\frac{1}{r'} = \|f_n\|_{L^r(\Omega)} \left(\int_{\Omega} \left(u_n^{\frac{(\mathfrak{S}_r +1)}{2}}\right)^{\frac{2N}{N-2}} ~dx\right)^\frac{1}{r'}\\
 & \leq \|f_n\|_{L^r(\Omega)} \left( S(N) \int_{\Omega} |\nabla u_n^{\frac{\mathfrak{S}_r +1}{2}}|^2 ~dx \right)^{\frac{N}{r'(N-2)}}
\end{split}
\end{equation}
where the last inequality follows from Sobolev's embeddings and $S(N)$ denotes the best Sobolev constant. Combining \eqref{est:apri-1} and \eqref{est:apri-2}, we get
\begin{equation}\label{est:apri-3}
    \int_{\Omega} |\nabla u_n^{\frac{\mathfrak{S}_r +1}{2}}|^2 ~dx \leq  \left( S(N)\right)^{\frac{N(r-1)}{N-2r}} \left(\|f_n\|_{L^r(\Omega)}\frac{(\mathfrak{S}_r +1)^2}{4 \mathfrak{S}_r}\right)^\frac{(N-2)r}{N-2r} \leq C(\|f\|_{L^r(\Omega)}, \mathfrak{S}_r, N).
\end{equation}
The above estimates in the case $r=1$ and $N \geq 2$ holds trivially. 
An easy computation with the above choice of $\mathfrak{S}_r$ implies
\begin{equation}\label{choice:2}
    \frac{N(\mathfrak{S}_r +1)}{N-2} = \frac{Nr(1+\gamma)}{N-2r} = \sigma_r
\end{equation}
Then, by using the uniform estimates in \eqref{est:apri-3} and Sobolev's embeddings, we obtain
\[\{u_n\}_{n \in \mathbb{N}} \ \text{is uniformly bounded in} \ L^{\sigma_r}(\Omega) \ \text{when $1 \leq r < \frac{N}{2},\ N > 2$}.\]
\textbf{Step 2:} Let $r= \frac{N}{2}$ and $N \geq 2$. In case of weak singularity {\it i.e.} $0 \leq \gamma \leq 1$, let us consider an increasing, convex and locally Lipschitz function $\mathfrak{H}: \mathbb{R} \to \mathbb{R}$ defined as
$$\mathfrak{H}(t):= \exp\left(\beta t\right)-1$$
where $\beta >0$ whose exact choice will be determined later. Since $u_n \in L^\infty(\Omega) \cap H_0^1(\Omega)$ for every $n \in \mathbb{N}$, we get $\mathfrak{H}(u_n) \in H_0^1(\Omega).$ A simple computations leads to
\begin{equation}\label{est:expo}
 \int_{\Omega} \nabla u_n \cdot \nabla \mathfrak{H}(u_n) ~dx= \int_{\Omega} \mathfrak{H}'(u_n) |\nabla u_n|^2  ~dx= \frac{4}{\beta} \int_{\Omega} \left| \nabla \mathfrak{H} \left(\frac{u_n}{2}\right)\right|^2  ~dx   
\end{equation}
Now, by testing the energy formulation \eqref{def:notion:approx} with $\mathfrak{H}(u_n)$ and using \eqref{est:expo}, we obtain
\begin{equation}\label{imp:est}
    \begin{split}
       \frac{4}{\beta} &\int_{\Omega} \left| \nabla \mathfrak{H} \left(\frac{u_n}{2}\right)\right|^2 ~dx  \\
       &\leq  \int_{\Omega} \nabla u_n \cdot \nabla \mathfrak{H}(u_n) ~dx + \frac{C(N,s)}{2} \int_{\mathbb{R}^N} \int_{\mathbb{R}^N} \frac{(u_n(x)- u_n(y))(\mathfrak{H}(u_n)(x) - \mathfrak{H}(u_n)(y))}{|x-y|^{N+2s}} ~dx ~dy \\
        &\leq \int_{\Omega} \frac{f_n(x)}{(u_n + \frac{1}{n})^\gamma} \mathfrak{H}(u_n) ~dx \leq  \int_{\Omega} \frac{f(x) \mathfrak{H}(u_n)}{u_n^\gamma}  ~dx = \int_{\Omega} \frac{f(x) \left(\exp(\beta u_n)-1\right)}{u_n^\gamma}  ~dx
    \end{split}
\end{equation}
\textit{Case 1:} $0 < \gamma \leq 1.$ \vspace{0.1cm}\\ To estimate the last term in \eqref{imp:est}, we use Lemma \ref{prelim:inequa}(iii) and $\exp(t)-1 \leq t \exp(t)$ for every $t \geq 0$,
\begin{equation*}
    \begin{split}
       \int_{\Omega} \left| \nabla \mathfrak{H} \left(\frac{u_n}{2}\right)\right|^2 ~dx  &\leq \frac{\beta}{4} \left(\beta\gamma^{-1}\right)^\gamma\int_{\Omega \cap \{u_n \leq 1\}} f(x) \left( \frac{\exp\left(\beta\gamma^{-1} u_n\right) -1}{ \beta\gamma^{-1} u_n}\right)^\gamma  ~dx \\
       & \quad \quad + \frac{\beta}{4} \int_{\Omega \cap \{u_n \geq 1\}} f(x) \exp\left(\beta u_n\right)  ~dx\\
        & \leq C^\sharp \int_{\Omega} f(x) \left(\exp\left(\frac{\beta u_n}{2}\right)\right)^2  ~dx = C^\sharp \int_{\Omega} f(x) \left(\mathfrak{H} \left(\frac{u_n}{2}\right) +1\right)^2  ~dx
\end{split}
\end{equation*}
where $C^\sharp:= \frac{\beta}{4} \max\{1, \left(\beta\gamma^{-1}\right)^\gamma \}.$ \vspace{0.1cm}\\
\textit{Case 2:} $\gamma=0.$ \vspace{0.1cm}\\
In this case the estimate in \eqref{imp:est}  takes the following form 
\begin{equation*}
    \begin{split}
       \int_{\Omega} \left| \nabla \mathfrak{H} \left(\frac{u_n}{2}\right)\right|^2 ~dx  & \leq C^\sharp \int_{\Omega} f(x) \left(\exp\left(\frac{\beta u_n}{2}\right)\right)^2  ~dx = C^\sharp \int_{\Omega} f(x) \left(\mathfrak{H} \left(\frac{u_n}{2}\right) +1\right)^2  ~dx
\end{split}
\end{equation*}
with $C^\sharp=\frac{\beta}{4}.$ Now, by using the fact that $(\mathfrak{H}(t)+1)^2 \leq 2 (\mathfrak{H}(t))^2+1)$ and H\"older inequality with exponents $\frac{N}{2}$ and $\frac{N}{N-2}$ , we further estimate 
\begin{equation*}
  \begin{split}
       \int_{\Omega} \left| \nabla \mathfrak{H} \left(\frac{u_n}{2}\right)\right|^2~dx &  \leq  2 C^\sharp \left(\int_{\Omega} f(x) ~dx + \int_{\Omega} f(x) \left(\mathfrak{H} \left(\frac{u_n}{2}\right)\right)^2 ~dx \right)\\
         & \leq 2 C^\sharp \left( \|f\|_1 + \|f\|_{\frac{N}{2}} \left\| \mathfrak{H} \left(\frac{u_n}{2}\right)\right\|_{\frac{2N}{N-2}}^2 \right).
    \end{split}
\end{equation*}
Now, by choosing $\beta$ small enough such that
\[
	\frac{2}{S(N) \|f\|_{\frac{N}{2}}} \geq
	 \left\{
	\begin{array}{ll}
	\beta \max\{1, \left(\beta\gamma^{-1}\right)^\gamma\} & \text{ if } \ 0 < \gamma \leq 1, \vspace{0.1cm}\\
	\beta & \text{ if } \ \gamma=0,
	\end{array} 
	\right.
	\]
and using Sobolev's embeddings and $|\Omega| < \infty$, we obtain
\begin{equation*}
    \left\|\nabla \mathfrak{H} \left(\frac{u_n}{2}\right)\right\|_2^2 \leq \frac{2 C^\sharp \|f\|_1}{\left(1 - 2 C^\sharp S(N) \|f\|_{\frac{N}{2}}\right)}
\end{equation*}
and 
\begin{equation*}
    \int_{\Omega} \exp \left(\frac{ \beta N u_n}{N-2}\right)~dx \leq C
\end{equation*}
where $C$ depends upon $\beta, S(N), \|f\|_{\frac{N}{2}}, |\Omega|$ but independent of $n.$ 

For the case of strong singularity {\it i.e.} $\gamma>1$, we consider the following locally Lipschitz and increasing function $\mathfrak{D} : \mathbb{R} \to \mathbb{R}^+ \cup \{0\}$ defined as
$$\mathfrak{D}(t) = \left(\exp(\alpha t) -1\right)^{\tau}$$
where $\tau \geq \gamma >1$ and $\alpha >0$ whose exact choices will be highlighted later. The regularity properties of the sequence $u_n$ and Lemma \ref{prelim:inequa}(iii) yields $\mathfrak{D}(u_n) \in H_0^1(\Omega)$ and 
\begin{equation}\label{est:expo-1}
\begin{split}
 \int_{\Omega} \nabla u_n \cdot \nabla \mathfrak{D}(u_n) ~dx &= \int_{\Omega} \mathfrak{D}'(u_n) |\nabla u_n|^2  ~dx= \alpha \tau \int_{\Omega} \left| \exp\left(\frac{\alpha u_n}{2}\right) \left(\exp(\alpha u_n) -1 \right)^{\frac{\tau-1}{2}} \nabla u_n \right|^2  ~dx \\
 & \geq \alpha \tau \int_{\Omega} \left| \exp\left(\frac{\alpha u_n}{2}\right) \left(\exp\left(\frac{\alpha u_n}{2}\right) -1 \right)^{\tau-1} \nabla u_n \right|^2  ~dx \\
 & = \frac{4}{\alpha \tau} \int_{\Omega} \left|\nabla  \left(\exp\left(\frac{\alpha u_n}{2}\right) -1 \right)^{\tau} \right|^2  ~dx = \frac{4}{\alpha \tau} \int_{\Omega} \left|\nabla  \mathfrak{D}\left(\frac{u_n}{2}\right) \right|^2  ~dx
 \end{split}
\end{equation}
 By taking $\mathfrak{D}(u_n)$ as a test function in the equation \eqref{def:notion:approx} and using \eqref{est:expo-1}, we obtain
\begin{equation*}
    \begin{split}
       \frac{4}{\alpha \tau} & \int_{\Omega} \left|\nabla  \mathfrak{D}\left(\frac{u_n}{2}\right) \right|^2  ~dx \\ &\leq  \int_{\Omega} \nabla u_n \cdot \nabla \mathfrak{D}(u_n) ~dx + \frac{C(N,s)}{2} \int_{\mathbb{R}^N} \int_{\mathbb{R}^N} \frac{(u_n(x)- u_n(y))(\mathfrak{D}(u_n)(x) - \mathfrak{D}(u_n)(y))}{|x-y|^{N+2s}} ~dx ~dy \\
        &\leq \int_{\Omega} \frac{f_n(x)}{(u_n + \frac{1}{n})^\gamma} \mathfrak{D}(u_n) ~dx \leq  \int_{\Omega} \frac{f(x) \mathfrak{D}(u_n)}{u_n^\gamma}  ~dx = \int_{\Omega} \frac{f(x) \left(\exp(\alpha u_n)-1\right)^\tau}{u_n^\gamma}  ~dx \\
        &\leq \alpha^\tau \int_{\Omega \cap \{u_n \leq 1\}} f(x) \left( \frac{\exp\left(\alpha u_n\right) -1}{ \alpha u_n}\right)^\tau  ~dx + \int_{\Omega \cap \{u_n \geq 1\}} f(x) \exp\left(\alpha \tau u_n\right)  ~dx\\
        & \leq \max\{1, \alpha^\tau\} \int_{\Omega } f(x) \exp\left(\alpha \tau u_n\right)  ~dx
    \end{split}
\end{equation*}
where to write the last two inequalities we have used $t^\tau \leq t^\gamma$ for $t \in (0,1]$ and $\exp(t)-1\leq t \exp(t)$ for every $t \geq 0.$ Now, by using the fact that $\exp(2\tau t) = \left(\mathfrak{D}^\frac{1}{\tau}(t) +1\right)^{2\tau} \leq 2^{2\tau-1} (\mathfrak{D}(t))^2+1)$ and H\"older inequality with exponents $\frac{N}{2}$ and $\frac{N}{N-2}$, we obtain
\begin{equation*}
  \begin{split}
       \int_{\Omega} \left| \nabla \mathfrak{D} \left(\frac{u_n}{2}\right)\right|^2~dx &  \leq C^* \int_{\Omega} f(x) \left(\exp\left(\frac{\alpha u_n}{2}\right)\right)^{2\tau}  ~dx \leq 2^{2\tau-1} C^* \int_{\Omega} f(x) \left(\mathfrak{D}^\frac{1}{\tau} \left(\frac{u_n}{2}\right) +1\right)^{2\tau}  ~dx\\
       &\leq 2^{2\tau-1} C^* \left(\int_{\Omega} f(x) ~dx + \int_{\Omega} f(x) \left(\mathfrak{D} \left(\frac{u_n}{2}\right)\right)^2 ~dx \right)\\
         & \leq 2^{2\tau-1} C^* \left( \|f\|_1 + \|f\|_{\frac{N}{2}} \left\| \mathfrak{D} \left(\frac{u_n}{2}\right)\right\|_{\frac{2N}{N-2}}^2 \right) \quad \quad C^*:= \frac{\alpha \tau}{4} \max\{1, \alpha^\tau\}.
    \end{split}
\end{equation*}
Now, by choosing $\alpha$ small enough such that $\alpha \max\{1, \alpha^\tau\}< \frac{2^{3-2\tau}}{\tau S(N) \|f\|_{\frac{N}{2}}}$ and using Sobolev's embeddings, we obtain
\begin{equation*}
    \left\|\nabla \mathfrak{D} \left(\frac{u_n}{2}\right)\right\|_2^2 \leq \frac{2^{2\tau -1} C^* \|f\|_1}{\left(1 - 2^{2\tau -1} C^* S(N) \|f\|_{\frac{N}{2}}\right)} \quad \text{and} \quad
    \int_{\Omega} \exp \left(\frac{2N \alpha \tau  u_n}{N-2}\right)~dx \leq C
\end{equation*}
where $C$ depends upon $\alpha, \tau, S(N), \|f\|_{\frac{N}{2}}, |\Omega|$ but independent of $n.$
\vspace{0.1cm}\\
\textbf{Step 3:}
To prove the boundedness result when $r > \frac{N}{2}$, we use the classical arguments from the seminal paper of Stampacchia \cite{Stampacchia}. Choosing $G_k(u_n):= (u_n-k)^+ \in H_0^1(\Omega)$ with $k \geq 1$ as a test function in the energy formulation \eqref{def:notion:approx}. Then, by using Sobolev embeddings, the H\"older inequality, $f_n \leq f$ and Lemma \ref{Lem:apriori}, we get
\begin{equation}\label{est:apri-10}
    \begin{split}
        \left(\int_{A_k} |G_k(u_n)|^{\frac{2N}{N-2}} ~dx \right)^\frac{N-2}{N} &\leq \int_{A_k} |\nabla G_k(u_n)|^2 ~dx = \int_{\Omega} \nabla u_n \cdot \nabla G_k(u_n) ~dx\\
        &\leq \int_{\Omega} \frac{f_n}{\left(u_n + \frac{1}{n}\right)^\gamma} G_k(u_n) ~dx \leq \int_{A_k} f_n(x) G_k(u_n) ~dx\\
        & \leq C \|f\|_{L^r(\Omega)} \left( \int_{A_k} (G_k(u_n))^{\frac{2N}{N-2}}\right)^\frac{N-2}{2N} |A_k|^{1-\frac{N-2}{2N}- \frac{1}{r}}
        \end{split}
\end{equation}
where $A_k:=\{x \in \Omega: u_n(x) \geq k\}.$ Let $h>k \geq 1$, then $A_h \subset A_k$ and $G_k(u_n) \geq h-k$ for $x \in A_h.$ Now, by manipulating the estimate in \eqref{est:apri-10} with above facts, we obtain
\begin{equation*}
    \begin{split}
     |h-k| |A_h|^\frac{N-2}{2N} &\leq \left(\int_{A_h} |G_k(u_n)|^{\frac{2N}{N-2}} ~dx \right)^\frac{N-2}{2N} \leq \left(\int_{A_k} |G_k(u_n)|^{\frac{2N}{N-2}} ~dx \right)^\frac{N-2}{2N} \\
     &\leq C \|f\|_{L^r(\Omega)}  |A_k|^{1-\frac{N-2}{2N}- \frac{1}{r}}
    \end{split}
\end{equation*}
which further implies
$$|A(h)| \leq C \frac{\|f\|_{L^r(\Omega)}^\frac{2N}{(N-2)} |A_k|^{\frac{2N}{N-2}\left(1-\frac{N-2}{2N}- \frac{1}{r}\right)}}{|h-k|^\frac{2N}{N-2}}.$$
Since, $r > \frac{N}{2}$, we have that
$$ \frac{2N}{N-2}\left(1-\frac{N-2}{2N}- \frac{1}{r}\right) >1.$$
Hence, we apply Lemma \ref{lem:iter} with the choice of $\psi(k)= |A_k|$, consequently there exists $k_0$ such that $\psi(k)=0$ for all $k \geq k_0$ and thus our claim. \vspace{0.1cm}\\
\textbf{Step 4:} Let $r \in [1, r^\sharp)$ and $q<2$ defined as in the statement of theorem. Observe that for any $r \in [1, r^\sharp)$, $\mathfrak{S}_r \in [\gamma,1)$ and 
\begin{equation}\label{cont:expo}
    {\frac{(1-\mathfrak{S}_r)q}{2-q}} = \frac{Nr (1+\gamma)}{N-2r}=\sigma_r.
\end{equation}
By applying H\"older inequality with exponent $\frac{2}{q}$ and $\frac{2}{2-q}$ and using claim in \textbf{Step 1}, we get
\begin{equation*}
    \begin{split}
    \int_{\Omega} |\nabla u_n|^q ~dx &= \int_{\Omega} \frac{|\nabla u_n|^q}{u_n^{\frac{(1-\mathfrak{S}_r)q}{2}}}  u_n^{\frac{(1-\mathfrak{S}_r)q}{2}}  ~dx \\
    &\leq \left(\int_{\Omega} u_n^{\mathfrak{S}_r-1} |\nabla u_n|^2 ~dx \right)^\frac{q}{2}  \left(\int_{\Omega} u_n^{\frac{(1-\mathfrak{S}_r)q}{2-q}} ~dx \right)^\frac{2-q}{2}\\
    & \leq \left(\frac{4}{(\mathfrak{S}_r+1)^2}\int_{\Omega} |\nabla u_n^{\frac{\mathfrak{S}_r+1}{2}}|^2 ~dx \right)^\frac{q}{2}  \left(\int_{\Omega} u_n^{\sigma_r}\right)^\frac{2-q}{2} \leq C \ \text{(independent of $n$)}.
    \end{split} 
\end{equation*}
As a special case, when $r \in [1, \infty]$, $\gamma=1$ or $r \geq r^\sharp$ and $0 \leq \gamma < 1$, the proof of Claim $(i)$ can repeated by taking $\mathfrak{S}_r=1$ and which further implies that $u_n$ is uniformly bounded in $H_0^1(\Omega).$
\end{proof}
\begin{remark}
\begin{itemize}
    \item From Lemma \ref{lem:apri:weak}(i), (iv) and Lemma \ref{Lem:apriori}(iii) we observe that for any $\gamma \geq 1$, $r \geq 1$, and $0 \leq \gamma <1$, $r \geq r^\sharp$, $u_n$ is uniformly bounded in $H^1_{loc}(\Omega)$ since $\mathfrak{S}_r \geq 1.$
    
    \item From \eqref{cont:expo}, we observe that for any $0 \leq \gamma <1$
$$q= \frac{2\sigma}{1-\mathfrak{S}_r + \sigma}$$ which implies that there is a kind of $``$continuity$"$ in the summability exponent $q$. Precisely, as $r \to r^\sharp$, $\mathfrak{S}_r \to 1$ and $q \to 2.$
\end{itemize}
\end{remark}
\begin{Lem}\label{lem:lower:reg}
Assume that $f \in L^r(\Omega) \setminus \{0\}$ is non-negative and $(r, \gamma) \in \mathcal{P}_{r, \gamma}$. Let $u_n$ be the weak solution of the problem $(P_n)$. Then, 
    \begin{equation*}
	u_n^{\frac{\mathfrak{S}+1}{2}} \ \text{is uniformly bounded in} \ H_0^1(\Omega) \ \text{with} \ \mathfrak{S} \in \left\{
	\begin{array}{ll}
	(0, \gamma] & \text{ if } \ \gamma+\frac{1}{r} < 1, \vspace{0.1cm}\\
	\left(\gamma-1+\frac{1}{r}, \gamma \right] & \text{ if } \ \gamma+\frac{1}{r} \geq 1.
	\end{array} 
	\right.
	\end{equation*}
\end{Lem}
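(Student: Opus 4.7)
The plan is to exploit the lower boundary estimate $u_n \geq C_0 \delta$ from Theorem \ref{Green:thm:1}, which allows extending the Sobolev regularity of Lemma \ref{lem:apri:weak} to exponents $\mathfrak{S}$ strictly below $\gamma$. The right-hand side that will arise, namely $\int_\Omega f_n u_n^{\mathfrak{S}-\gamma}\,dx$, cannot be handled by Sobolev embedding alone, because $u_n^{\mathfrak{S}-\gamma}$ is now singular at $\partial \Omega$; the uniform-in-$n$ boundary bound, combined with H\"older's inequality, will absorb this singularity under the stated arithmetic conditions on $\mathfrak{S}$, $\gamma$, and $r$.

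Concretely, for $\epsilon \in (0, 1/n)$, I would first test the weak formulation \eqref{def:notion:approx} with $(u_n + \epsilon)^{\mathfrak{S}} - \epsilon^{\mathfrak{S}} \in H_0^1(\Omega)$ (admissible since $u_n \in C^{1,\beta}(\overline{\Omega})$ and $u_n+\epsilon\geq\epsilon>0$), apply Lemma \ref{prelim:inequa}(i) to the nonlocal bilinear form, and discard its nonnegative contribution. Passing $\epsilon \to 0^+$ via Fatou on the left and dominated convergence on the right (the latter justified by $u_n \in L^\infty(\Omega)$ for each fixed $n$), and using the pointwise inequality $(u_n+1/n)^{-\gamma} u_n^{\mathfrak{S}} \leq u_n^{\mathfrak{S}-\gamma}$, which is valid because $\mathfrak{S} \leq \gamma$, I obtain the key differential estimate
\[
\frac{4\mathfrak{S}}{(\mathfrak{S}+1)^2} \int_\Omega \bigl|\nabla u_n^{(\mathfrak{S}+1)/2}\bigr|^2 \, dx \leq \int_\Omega f_n u_n^{\mathfrak{S}-\gamma} \, dx.
\]

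Next I invoke Theorem \ref{Green:thm:1} to bound $u_n^{\mathfrak{S}-\gamma} \leq C_0^{\mathfrak{S}-\gamma} \delta^{\mathfrak{S}-\gamma}$ (using $\mathfrak{S}-\gamma \leq 0$), followed by H\"older's inequality with exponents $r$ and $r'$ together with $f_n \leq f$:
\[
\int_\Omega f_n u_n^{\mathfrak{S}-\gamma} \, dx \leq C_0^{\mathfrak{S}-\gamma} \, \|f\|_{L^r(\Omega)} \biggl(\int_\Omega \delta^{(\mathfrak{S}-\gamma) r'} \, dx\biggr)^{1/r'}.
\]
Since $\Omega$ has $C^{1,1}$ boundary, $\int_\Omega \delta^{-s}\,dx < \infty$ if and only if $s < 1$, so the right-hand side is finite precisely when $(\gamma - \mathfrak{S})\, r' < 1$, i.e. $\mathfrak{S} > \gamma - 1 + 1/r$. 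Combined with $\mathfrak{S} > 0$ (required to keep the coefficient $\mathfrak{S}/(\mathfrak{S}+1)^2$ on the left positive), this matches exactly the two ranges announced in the lemma: $\mathfrak{S}\in(0,\gamma]$ when $\gamma + 1/r < 1$ (for then $\gamma - 1 + 1/r < 0$), and $\mathfrak{S}\in(\gamma-1+1/r,\gamma]$ when $\gamma + 1/r \geq 1$.

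The main obstacle is really the uniformity in $n$ of the lower bound $u_n \geq C_0 \delta$, which was handled separately via Green-function techniques in Theorem \ref{Green:thm:1}; once that is granted, the remainder is a standard H\"older plus $\int \delta^{-s}$ computation. A minor technical point is that for small $\mathfrak{S}$ (specifically $\mathfrak{S}\leq 1/2$) the raw power $u_n^{\mathfrak{S}}$ may fail to be in $H_0^1(\Omega)$ since $|\nabla u_n^{\mathfrak{S}}|^2 \sim \delta^{2\mathfrak{S}-2}$ near $\partial\Omega$ is not integrable, which is exactly why the $\epsilon$-shift in the test function is essential and one passes to the limit only after producing the $n$-uniform bound.
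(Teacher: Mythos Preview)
Your proposal is correct and follows essentially the same route as the paper: test with $(u_n+\epsilon)^{\mathfrak{S}}-\epsilon^{\mathfrak{S}}$, drop the nonnegative nonlocal term, use the uniform lower bound $u_n\gtrsim\delta$ from Theorem~\ref{Green:thm:1} to control $u_n^{\mathfrak{S}-\gamma}$, and finish with H\"older plus the integrability condition $\int_\Omega\delta^{(\mathfrak{S}-\gamma)r'}<\infty$. The only cosmetic difference is that the paper bounds the right-hand side uniformly in $\epsilon$ before sending $\epsilon\to0$, whereas you pass to the limit first; both orderings are fine.
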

\begin{proof}
Taking $(u_n + \epsilon)^{\mathfrak{S}} - \epsilon^{\mathfrak{S}}$ as a test function in \eqref{def:notion:approx}  for $\epsilon \in (0, \frac{1}{n})$ and $\mathfrak{S} \in (0,\gamma]$  and using lower boundary estimates in Theorem \ref{Green:thm:1}, we obtain
\begin{equation}\label{low:est:apri-0}
\begin{split}
    \int_{\Omega} \nabla u_n \cdot \nabla (u_n + \epsilon)^{\mathfrak{S}} ~dx &+ \frac{C(N,s)}{2} \int_{\mathbb{R}^N} \int_{\mathbb{R}^N} \frac{(u_n(x)- u_n(y))((u_n + \epsilon)^{\mathfrak{S}}(x)- (u_n + \epsilon)^{\mathfrak{S}}(y))}{|x-y|^{N+2s}} ~dx ~dy \\
    & \leq \int_{\Omega} \frac{f_n(x)}{(u_n + \epsilon)^{\gamma-\mathfrak{S}}} ~dx \leq C \int_{\Omega} \frac{f(x)}{\delta^{\gamma-\mathfrak{S}}(x)} ~dx \\
    & \leq C \|f\|_{L^r(\Omega)} \left(\int_{\Omega} \delta^{\frac{-r(\gamma-\mathfrak{S})}{r-1}}(x) ~dx\right)^{\frac{r-1}{r}} \\
    &\leq C(\|f\|_{L^r(\Omega)}, N, \Omega,r) \quad \text{if} \quad \frac{-r(\gamma-\mathfrak{S})}{r-1} >-1 \ \text{and} \ \mathfrak{S} >0
\end{split}
\end{equation}
where the last condition is equivalent to
\[\frac{-r(\gamma-\mathfrak{S})}{r-1} >-1 \ \text{and} \ \mathfrak{S} >0 \quad \text{if and only if} \quad \mathfrak{S} \in \left\{
	\begin{array}{ll}
	(0, \gamma] & \text{ if } \ \gamma+\frac{1}{r} < 1, \vspace{0.1cm}\\
	\left(\gamma-1+\frac{1}{r}, \gamma \right] & \text{ if } \ \gamma+\frac{1}{r} \geq 1.
	\end{array} 
	\right.
	\]
Now, by using the same arguments as in proof of Lemma \ref{lem:apri:weak}(i) and passing $\epsilon \to 0$, we obtain our claim.	
\end{proof}
\subsection{Green's function estimates}
In this section, we prove series of Lemma involving the upper and lower estimate of the action of Green operator on the logarithmic perturbation of the distance function. Similar type of Green estimates for large class of nonlocal operator is proved in \cite{Arora_Tai}. For estimates near the boundary {\it i.e.} in $\Omega_\eta$, we partition the set into the following five components:
For this, we partition the set $\Omega_\eta$ into the following five components 
	$$\Omega_1:= B(x, \delta(x)/2), \quad \Omega_2:= \Omega_{\eta} \setminus B(x,1),$$
	$$\Omega_3:= \{y: \delta(y) < \delta(x)/2\} \cap B(x,1), \quad \Omega_4:= \left\{y: \frac{3 \delta(x)}{2} < \delta(y) < \eta \right\} \cap B(x,1),$$
	$$\Omega_5:= \left\{y: \frac{\delta(x)}{2} < \delta(y) < \frac{3 \delta(x)}{2}\right\} \cap \left( B(x,1) \setminus B(x, \delta(x)/2)\right),$$
and set $\ell(t)=\ln\left(\frac{\mathcal{D}_\Omega}{t}\right).$
For $x \in \Omega_\eta$, we denote $\phi_x: B(x,1) \to B(0,1)$ be a diffeomorphism such that
	\begin{align*}\phi_x(\Omega \cap B(x,1)) = B(0,1) \cap \{y \in \mathbb{R}^N : y \cdot e_N >0\}, \\ 
	\phi_x(y) \cdot e_N = \delta(y) \ \text{for} \ y \in B(x,1)\  \text{and} \ \phi_x(x)= \delta(x) e_N.
	\end{align*}
\begin{Lem}\label{lem:greest1}
For $\Xi \in [0,1)$, we have
	$$\mathbb{G}^\Omega\left[ \frac{\ell^{-\Xi}(\delta(\cdot))}{\delta(\cdot)}\right](x) \gtrsim \delta(x) \ell^{1-\Xi}(\delta(x)) \quad \forall \ x \in \Omega.$$
\end{Lem}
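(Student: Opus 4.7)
The plan is to use the Green representation $\mathbb{G}^\Omega[f](x) = \int_\Omega G(x,y) f(y)\,dy$ with $f(y) = \ell^{-\Xi}(\delta(y))/\delta(y)$, and to obtain the lower bound by restricting the integration to a judiciously chosen subregion of $\Omega_4$. Specifically, set
$$R_x := \{y \in \Omega : 3\delta(x)/2 < \delta(y) < \eta,\ |x-y| \geq 2\delta(y)\} \subset \Omega_4.$$
On $R_x$ one has $|x-y|^2 \geq 4\delta(y)^2 \geq 4\delta(x)\delta(y)$, so the Chen--Kim two-sided estimate $G(x,y) \asymp \min\{|x-y|^{2-N}, \delta(x)\delta(y)/|x-y|^N\}$ reduces to the lower bound $G(x,y) \gtrsim \delta(x)\delta(y)/|x-y|^N$. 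The factor $\delta(y)$ then cancels the $\delta(y)^{-1}$ in the weight, yielding
$$\mathbb{G}^\Omega[f](x) \gtrsim \delta(x) \int_{R_x} \frac{\ell^{-\Xi}(\delta(y))}{|x-y|^N}\,dy.$$

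To evaluate this integral I would straighten the boundary near $x$ via the diffeomorphism $\phi_x$ introduced above (whose Jacobian is uniformly comparable to one), parametrizing $y$ by the tangential variable $y' \in \mathbb{R}^{N-1}$ and the normal variable $t = \delta(y)$; then $|x-y|^2 \asymp (t-\delta(x))^2 + |y'|^2$. The constraint $|x-y| \geq 2t$ becomes, up to constants, $|y'| \gtrsim t$, while $|x-y| \lesssim 1$ gives $|y'| \lesssim 1$. Integrating first in $y'$ via polar coordinates in $\mathbb{R}^{N-1}$, and using $|y'| \gtrsim t \gtrsim |t-\delta(x)|$ so that $|x-y| \asymp (t^2+|y'|^2)^{1/2}$, the standard computation gives
$$\int_{ct \lesssim |y'| \lesssim 1} \frac{dy'}{(t^2 + |y'|^2)^{N/2}} \asymp \frac{1}{t},$$
which reduces the problem to the one-dimensional estimate
$$\mathbb{G}^\Omega[f](x) \gtrsim \delta(x) \int_{3\delta(x)/2}^{\eta} \frac{\ell^{-\Xi}(t)}{t}\,dt.$$

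The key step, and the only place where the hypothesis $\Xi < 1$ is needed, is to evaluate this last integral via the substitution $s = \ln(\mathcal{D}_\Omega/t)$, giving
$$\int_{\ell(\eta)}^{\ell(3\delta(x)/2)} s^{-\Xi}\,ds = \frac{\ell^{1-\Xi}(3\delta(x)/2) - \ell^{1-\Xi}(\eta)}{1-\Xi} \asymp \ell^{1-\Xi}(\delta(x))$$
for $\delta(x)$ sufficiently small, which together with the factor $\delta(x)$ outside yields the desired bound near the boundary. For $x$ with $\delta(x)$ bounded below (so $\ell(\delta(x))$ is bounded), the claim reduces to $\mathbb{G}^\Omega[f](x) \gtrsim \delta(x)$, and this follows from the strict positivity and continuity of $\mathbb{G}^\Omega[f]$ on compact subsets of $\Omega$. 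The main obstacle I anticipate is the careful bookkeeping of the change of variables and the verification that the Chen--Kim lower bound applies uniformly on $R_x$; both steps are essentially routine provided one invokes the bi-Lipschitz character of $\phi_x$ together with the precise form of the two-sided Green's kernel estimates for the mixed operator.
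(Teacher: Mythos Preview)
Your proposal is correct and follows essentially the same route as the paper: restrict to the subregion $\Omega_4$ (you pass to the slightly smaller $R_x\subset\Omega_4$, which is harmless for a lower bound), invoke the Chen--Kim two--sided kernel estimate to reduce to $\delta(x)\int \ell^{-\Xi}(\delta(y))/|x-y|^N\,dy$, straighten the boundary via $\phi_x$, and integrate. The only real difference is cosmetic: the paper first uses the monotonicity $\ell^{-\Xi}(\delta(y))\ge \ell^{-\Xi}(\delta(x))$ on $\Omega_4$ to pull out the logarithmic weight and then computes $\int_{\Omega_4} G^\Omega(x,y)/\delta(y)\,dy\gtrsim \delta(x)\ell(\delta(x))$, whereas you keep $\ell^{-\Xi}$ inside and evaluate the resulting one--dimensional integral by the substitution $s=\ell(t)$. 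Both give the claim; the paper's version makes it slightly more transparent that the implicit constant can be taken independent of $\Xi$, a uniformity that is used later in the iteration of the proof of Theorem~\ref{result:6}.
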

\begin{proof} Let $\eta>0$ small. We begin by splitting the integrals over two regions $\Omega_\eta$ and $\Omega \setminus \Omega_\eta$ as follows
	\begin{equation} \label{eq:GI1I2}\begin{aligned} \mathbb{G}^\Omega\left[ \frac{\ell^{-\Xi}(\delta(\cdot))}{\delta(\cdot)} \right](x) &= \sum_{i=1}^5 \int_{\Omega_i} \frac{G^{\Omega}(x,y)}{\delta(y)} \ell^{-\Xi}(\delta(y)) ~dy  + \int_{\Omega \setminus \Omega_{\eta}} \frac{G^{\Omega}(x,y)}{\delta(y)} \ell^{-\Xi}(\delta(y)) ~dy \\
	&:= \sum_{i=1}^5 I_1^{(i)}(x) + I_2(x).
	\end{aligned} \end{equation}
As we know that $I_1^{(i)}(x), I_2(x) \geq 0$ for every $x \in \Omega$ and $i=1,2,\dots,5$, so it is enough find the lower estimate of the term $I_1^{(4)}$ when $x \in \Omega_\frac{\eta}{2}$, and the lower estimate of the term $I_2$ when $x \in \Omega \setminus \Omega_\frac{\eta}{2}.$ For $y \in \Omega_4$ and $x \in \Omega_\frac{\eta}{2}$, we have
	$$\left( \frac{\delta(x) \delta(y)}{|x-y|^{2}} \wedge 1 \right) \asymp \frac{\delta(x) \delta(y)}{|x-y|^{2}},\quad \ell^{-\Xi}(\delta(x)) \leq \ell^{-\Xi}(\delta(y)) .$$
	Therefore
	\begin{equation} \label{est:I1-1} \begin{aligned}
	I_1(x) \geq \int_{\Omega_4}  \frac{G^{\Omega}(x,y)}{\delta(y)} \ell^{-\Xi}(\delta(y)) ~dy   
	\geq  \ell^{-\Xi}(\delta(x)) \int_{\Omega_4} \frac{G^\Omega(x,y)}{\delta(y)} ~dy := \ell^{-\Xi}(\delta(x)) \mathcal{J}
	\end{aligned} \end{equation}
	Now, by using estimates on \cite[Proof of Lemma 3.3, Page 40]{AbaGomVaz_2019} and performing change of variables via diffeomorphism $\phi_x$, we obtain
	\begin{equation}\label{est:I1-2} 
	\begin{split}
	 &\mathcal{J} \gtrsim  \delta(x) \int_{\{\frac{3\delta(x)}{2} < w_N < \eta\} \cap B(0,1)} \frac{1}{(|\delta(x)-w_N| + |w'|)^{N}} ~dw_N dw' \\
	 &= \delta(x) \int_{3/2}^{\eta/\delta(x)}  \int_0^{1/\delta(x)} \frac{t^{N-2} }{(|1-h|+t)^{N}} ~dt~dh \\
	&=  \delta(x) \int_{3/2}^{\eta/\delta(x)} \frac{1}{(h-1)} \int_0^{1/(h-1)\delta(x)} \frac{ r^{N-2} }{(1+r)^{N}} ~dr~dh \\
	& \gtrsim \delta(x) \int_{3/2}^{\eta/\delta(x)} \frac{1}{(h-1)} \int_1^{1/(h-1)\delta(x)} \frac{1 }{(1+r)^{2} } ~dr~dh \gtrsim \delta(x) \int_{3/2}^{\eta/\delta(x)} \frac{1}{(h-1)} ~dh \\
	&\gtrsim  \delta(x) \int_{3/2}^{\eta/\delta(x)}  \frac{1}{h} ~dh  = \delta(x) \left(\ln\left(\frac{\eta}{\delta(x)}\right) - \ln\left(\frac{3}{2} \right) \right).
	\end{split}
	\end{equation}
By combining \eqref{eq:GI1I2}, \eqref{est:I1-1} and \eqref{est:I1-2}, we obtain
\begin{equation}\label{lower:final1}
    \mathbb{G}^\Omega\left[ \frac{1}{\delta(\cdot)} \ell^{-\Xi}(\delta(\cdot)) \right](x) \geq I_1(x) \geq C \delta(x) \ell^{1-\Xi}(\delta(x)) \ \text{for} \ x \in \Omega_\frac{\eta}{2}
\end{equation}
where the constant is independent of the parameter $\Xi.$ Let $ x \in \Omega \setminus \Omega_\frac{\eta}{2}>0.$ 	Since the operator $\mathbb{G}^\Omega$ maps $ L^\infty_c(\Omega) \to \delta^\gamma C(\overline{\Omega})$ (\cite[Theorem 2.10]{AbaGomVaz_2019}), therefore we have the following estimates:
	\begin{equation}\label{lower:final2}
	\begin{split}
	 I_2(x)  & = \int_{\Omega \setminus \Omega_{\eta}} \frac{G^{\Omega}(x,y)}{\delta(y)} \ell^{-\Xi}(\delta(y)) ~dy \geq  \ell^{-\Xi}(\eta) \mathbb{G}^\Omega\left[ \frac{\chi_{\Omega \setminus \Omega_{\eta}}}{\delta}\right](x)\\
	& \geq C \ell^{-\Xi}\left(\frac{\eta}{2}\right) \delta(x) \geq C(\eta) \ell^{1-\Xi}\left(\frac{\eta}{2}\right) \delta(x) \geq C(\eta) \ell^{1-\Xi}\left(\delta(x)\right) \delta(x).
	\end{split}  
	\end{equation}
Finally, by combining \eqref{lower:final1} and \eqref{lower:final2}, we obtain our claim.		
\end{proof}

\begin{Lem}\label{lem:greest2}
For $\Xi \in (0,1)$ following estimate holds:
	$$\mathbb{G}^\Omega\left[ \frac{1}{\delta(\cdot)} \ell^{-\Xi}(\delta(\cdot)) \right](x) \lesssim \delta(x) \ell^{1-\Xi}(\delta(x)) \quad \forall \ x \in \Omega.$$
\end{Lem}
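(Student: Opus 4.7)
The plan is to mirror the decomposition used in Lemma 4.1 and, on each piece, replace the pointwise lower bounds by the matching pointwise upper bounds for the Green kernel $G^{\Omega}$ of the mixed operator (available, e.g., from the Chen--Kim estimates cited in the paper). Fix $\eta>0$ small. For $x\in\Omega_{\eta/2}$ split
\begin{equation*}
\mathbb{G}^{\Omega}\!\left[\tfrac{\ell^{-\Xi}(\delta(\cdot))}{\delta(\cdot)}\right]\!(x)
=\sum_{i=1}^{5} \int_{\Omega_i} \frac{G^{\Omega}(x,y)\,\ell^{-\Xi}(\delta(y))}{\delta(y)}\,dy
+\int_{\Omega\setminus\Omega_\eta}\frac{G^{\Omega}(x,y)\,\ell^{-\Xi}(\delta(y))}{\delta(y)}\,dy,
\end{equation*}
and for $x\in\Omega\setminus\Omega_{\eta/2}$ use directly that $\delta(y)^{-1}\ell^{-\Xi}(\delta(y))$ is bounded on supports where it matters and $\mathbb{G}^{\Omega}[\mathbf{1}](x)\lesssim\delta(x)\lesssim\delta(x)\ell^{1-\Xi}(\delta(x))$. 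So the real work is for $x$ close to $\partial\Omega$.

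On each of the five near-boundary regions I would use the sharp two-sided estimate $G^{\Omega}(x,y)\asymp |x-y|^{2-N}\bigl(\tfrac{\delta(x)\delta(y)}{|x-y|^2}\wedge 1\bigr)$ (taking the relevant upper half), together with the following comparisons for $\delta(y)$ and $\ell^{-\Xi}(\delta(y))$: on $\Omega_1$ and $\Omega_5$ one has $\delta(y)\asymp\delta(x)$, hence $\ell^{-\Xi}(\delta(y))\asymp\ell^{-\Xi}(\delta(x))$, and the contribution is dominated by $\delta(x)\ell^{-\Xi}(\delta(x))$ times a harmless integral, giving at worst $\delta(x)\ell^{1-\Xi}(\delta(x))$; on $\Omega_2$ the kernel is bounded and $\delta(y)$ is bounded below by $\eta$, yielding the trivial bound $\delta(x)\ell^{-\Xi}(\eta)\lesssim\delta(x)\ell^{1-\Xi}(\delta(x))$; on $\Omega_3$ one uses $\delta(y)\le\delta(x)/2$ and therefore $|x-y|\gtrsim\delta(x)$ to kill one power of $\delta(y)^{-1}$ after integration in thin slabs $\{\delta(y)\sim 2^{-k}\delta(x)\}$, and the logarithmic factor $\ell^{-\Xi}(\delta(y))$ only produces lower-order corrections (this is where $\Xi>0$ is used for convergence).

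The delicate piece is $\Omega_4=\{3\delta(x)/2<\delta(y)<\eta\}\cap B(x,1)$, where the logarithm $\ell^{1-\Xi}(\delta(x))$ is actually generated. Here $\delta(x)\delta(y)\le|x-y|^2$ so $G^{\Omega}(x,y)\lesssim \delta(x)\delta(y)|x-y|^{-N}$, and straightening the boundary by $\phi_x$ (as in the proof of Lemma 4.1) one reduces to
\begin{equation*}
\int_{\Omega_4}\frac{G^{\Omega}(x,y)\,\ell^{-\Xi}(\delta(y))}{\delta(y)}\,dy
\lesssim \delta(x)\int_{3\delta(x)/2}^{\eta}\!\!\int_{\mathbb{R}^{N-1}}\frac{\ell^{-\Xi}(w_N)\,dw'\,dw_N}{(|\delta(x)-w_N|+|w'|)^{N}}.
\end{equation*}
Performing the $w'$-integration gives $(|\delta(x)-w_N|)^{-1}$ up to a constant, and then the change of variables $h=w_N/\delta(x)$ reduces the outer integral to
\begin{equation*}
\delta(x)\int_{3/2}^{\eta/\delta(x)} \frac{\ell^{-\Xi}(h\delta(x))}{h-1}\,dh
\,\lesssim\, \delta(x)\int_{3/2}^{\eta/\delta(x)}\frac{\ell^{-\Xi}(h\delta(x))}{h}\,dh.
\end{equation*}
Since $\ell(h\delta(x))=\ell(\delta(x))-\ln h\le \ell(\delta(x))$ for $h\ge 1$, and since $\Xi\in(0,1)$, the substitution $t=\ln h$ converts this to $\delta(x)\int_{0}^{\ln(\eta/\delta(x))}(\ell(\delta(x))-t)^{-\Xi}dt$, whose value is exactly of order $\delta(x)\ell^{1-\Xi}(\delta(x))$. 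This is the matching upper bound; putting all six pieces together yields the lemma. The main obstacle is precisely this $\Omega_4$ calculation: the geometry requires the kernel's sharp upper bound, and the necessary integrability of $\ell^{-\Xi}$ against $dh/h$ near $h=1$ is exactly what the hypothesis $\Xi\in(0,1)$ delivers (note the strict inequality $\Xi>0$, which is needed here but was not needed in the lower bound of Lemma 4.1).
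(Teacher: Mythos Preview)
Your approach is essentially identical to the paper's: the same five-region decomposition for $x\in\Omega_{\eta/2}$, the same sharp upper bound $G^{\Omega}(x,y)\lesssim |x-y|^{2-N}\bigl(\tfrac{\delta(x)\delta(y)}{|x-y|^{2}}\wedge 1\bigr)$, and the same key computation on $\Omega_{4}$ that produces the factor $\ell^{1-\Xi}(\delta(x))$ after the substitution $t=\ell(h\delta(x))$ (equivalently $t=\ln h$).

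Two small corrections are worth making. First, on $\Omega_{2}=\Omega_{\eta}\setminus B(x,1)$ one has $\delta(y)<\eta$, not $\delta(y)\geq\eta$; the right argument (and the one the paper uses) is that $|x-y|\geq 1$, so $G^{\Omega}(x,y)/\delta(y)\lesssim \delta(x)|x-y|^{-N}\lesssim \delta(x)$, together with $\ell^{-\Xi}(\delta(y))\leq \ell^{-\Xi}(\eta)$. Second, you misattribute the role of the hypothesis: it is $\Xi<1$, not $\Xi>0$, that is needed. On $\Omega_{3}$ there is no convergence issue at all (one simply uses $\ell^{-\Xi}(\delta(y))\leq \ell^{-\Xi}(\delta(x))$ since $\delta(y)<\delta(x)$), and on $\Omega_{4}$ the lower limit $h=3/2$ is harmless; what matters is that $\int t^{-\Xi}\,dt$ (equivalently your $\int (\ell(\delta(x))-t)^{-\Xi}\,dt$) is finite over an interval of length $\sim\ell(\delta(x))$, which requires $\Xi<1$ and yields the factor $\tfrac{1}{1-\Xi}\ell^{1-\Xi}(\delta(x))$. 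The paper's proof never invokes $\Xi>0$.
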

\begin{proof} 
Let $I_1(x)$ and $I_2(x)$ as in \eqref{eq:GI1I2}. To derive the estimate for $I_1$, we divide the proof into two cases depending upon the location of the point $x.$  \vspace{0.1cm}\\
\textbf{Case 1:} $x \in \Omega_{\frac{\eta}{2}}.$ \vspace{0.1cm}\\ By partitioning the domain of integral $I_1(x)$ over $\{\Omega_i\}_{i=1}^5$, we find the upper estimates over each subdomain $\Omega_i$.  Observing, for $y_1 \in \Omega_1$ and $y_2 \in \cup_{i=2}^5 \Omega_i$, we have $$\left( \frac{\delta(x) \delta(y_1)}{|x-y_1|^{2}} \wedge 1 \right) \asymp 1\ \text{and} \ \left( \frac{\delta(x) \delta(y_2)}{|x-y_2|^{2}} \wedge 1 \right) \asymp \frac{\delta(x) \delta(y_2)}{|x-y_2|^{2}}.$$
	Now, by again using the change of variables via diffeomorphism $\phi_x$ and \cite[Proof of Lemma 3.3]{AbaGomVaz_2019}, we get estimates in each domain. \vspace{0.1cm}\\
\textit{Estimate over $\Omega_1$:} Choosing $\eta$ small enough such that $0< \eta < \frac{2\mathcal{D}_{\Omega}}{\exp(1)}$, for some  $c \in (0,1)$, we have, for any $y \in \Omega_1$, 
	$$ \delta(y) \leq \frac{3}{2} \delta(x), \quad \ell^{-\Xi}(\delta(y)) \leq c^{-\Xi} \ell^{-\Xi}(\delta(x)) \leq c^{-1} \ell^{-\Xi}(\delta(x)). $$
	Therefore
	\begin{equation*}
	\begin{split}
	\int_{\Omega_1}  \frac{G^{\Omega}(x,y)}{\delta(y)} \ell^{-\Xi}(\delta(y)) ~dy &\leq c^{-1} \ell^{-\Xi}(\delta(x)) \frac{1}{\delta(x)} \int_{B(x, \delta(x)/2)} \frac{1}{|x-y|^{N-2}} ~dy \\
	& \leq c^{-1} \ell^{-\Xi}(\delta(x)) \delta(x) \leq C \ell^{1-\Xi}(\delta(x)) \delta(x),
	\end{split}
	\end{equation*}
	where $C$ is independent of parameter $\Xi.$ \vspace{.05cm}\\ 
\textit{Estimate over $\Omega_2$:}
	\begin{equation*}
	\begin{split}
	\int_{\Omega_2} \frac{G^{\Omega}(x,y)}{\delta(y)} \ell^{-\Xi}(\delta(y)) ~dy &\leq \ell^{-\Xi}\left(\eta\right) \delta(x) \int_{\Omega_2} \frac{1}{|x-y|^{n}}  ~dy \leq C(\eta) \  \ell^{1-\Xi}(\delta(x)) \delta(x),
	\end{split}
	\end{equation*}
	where $C$ is independent of parameter $\Xi.$ \vspace{.05cm}\\
\textit{Estimate over $\Omega_3$:} We note that, for any $y \in \Omega_3$,  $\ell^{-\Xi}(\delta(y)) \leq \ell^{-\Xi}(\delta(x)).$ 
	Therefore
	\begin{equation*}
	\begin{split}
	\int_{\Omega_3}  & \frac{G^{\Omega}(x,y)}{\delta(y)} \ell^{-\Xi}(\delta(y)) ~dy \leq \ell^{-\Xi}(\delta(x)) \delta(x) \int_{\Omega_3} \frac{1}{|x-y|^{N}} ~dy \\
	& \lesssim  \ell^{-\Xi}(\delta(x)) \delta(x) \int_{|w'| <1} \int_0^{\delta(x)/2} \frac{1}{(|\delta(x)-w_N| + |w'|)^{N}} ~dw_N dw'\\
	& \lesssim  \ell^{-\Xi}(\delta(x)) \delta(x) \int_0^{1/\delta(x)} \int_0^{1/2} \frac{h^{N-2} }{((1-t)+h)^{N}} ~dt~dh\\
	& \lesssim  \ell^{-\Xi}(\delta(x)) \delta(x) \int_0^{1/\delta(x)} \frac{h^{N-2}}{(1+h)^{N}} ~dh \leq  C(\eta) \ell^{1-\Xi}(\delta(x)) \delta(x).
	\end{split}
	\end{equation*}
\textit{Estimate over $\Omega_4$:} We have
	\begin{equation*}
	\begin{split}
	\int_{\Omega_4} &  \frac{G^{\Omega}(x,y)}{\delta(y)} \ell^{-\Xi}(\delta(y)) ~dy  \leq \delta(x) \int_{\Omega_4} \frac{1}{|x-y|^{N} \ell^{\Xi}(\delta(y))} ~dy \\
	& \lesssim \delta(x) \int_{\{\frac{3\delta(x)}{2} < w_N < \eta\} \cap B(0,1)} \frac{1}{(|\delta(x)-w_N| + |w'|)^{N} \ell^{\Xi}(w_N)} ~dw_N dw'\\
	& = \delta(x) \int_{3/2}^{\eta/\delta(x)}  \int_0^{1/\delta(x)} \frac{t^{N-2}}{(|1-h|+t)^{N} \ell^{\Xi}(h\delta(x)) } ~dt~dh\\
	& =  \delta(x) \int_{3/2}^{\eta/\delta(x)} \frac{1}{(h-1) \ell^{\Xi}(h\delta(x)) } \int_0^{1/(h-1)\delta(x)} \frac{ r^{N-2} }{(1+r)^{N} } ~dr~dh \\
	& \lesssim  \delta(x) \int_{3/2}^{\eta/\delta(x)} \frac{1}{(h-1) \ell^{\Xi}(h\delta(x)) } \int_0^{1/(h-1)\delta(x)} \frac{1 }{(1+r)^{2} } ~dr~dh \\
	& \lesssim \delta(x) \int_{3/2}^{\eta/\delta(x)} \frac{1}{(h-1) \ell^{\Xi}(h\delta(x)) } ~dh \lesssim \delta(x) \int_{3/2}^{\eta/\delta(x)}  \frac{ \ell^{-\Xi}(h\delta(x)) }{h} ~dh \\
	& \lesssim \delta(x) \int^{\ln(2\mathcal{D}_{\Omega}/3\delta(x))}_{\ln(\mathcal{D}_{\Omega}/\eta)} \frac{1}{t^\Xi} dt  \leq \frac{C(\eta)}{(1-\Xi)} \ell^{1-\Xi}(\delta(x)) \delta(x) 
	\end{split}
	\end{equation*}
\textit{Estimate over $\Omega_5$:} Again, by choosing $\eta$ small enough such that $ \eta < \frac{2A}{\exp(1)}$,  for some $c \in (0,1)$, we have, for $y \in \Omega_5$,  
	$$ \delta(y) \leq \frac{3}{2} \delta(x) \ \text{and} \ \ell^{-\Xi}(\delta(y)) \leq c^{-\Xi} \ell^{-\Xi}(\delta(x)) \leq c^{-1} \ell^{-\Xi}(\delta(x)).$$
	Therefore
	\begin{equation*}
	\begin{split}
	\int_{\Omega_5}  &\frac{G^{\Omega}(x,y)}{\delta(y)} \ell^{-\Xi}(\delta(y)) ~dy  \lesssim \delta^{\gamma}(x) \int_{\Omega_5} \frac{1}{|x-y|^{N} \ell^{-\Xi}(\delta(y)) } ~dy \\
	& \lesssim \delta(x) \ell^{-\Xi}(\delta(x)) \int_{\Omega_5} \frac{1}{|x-y|^{N}} ~dy \lesssim \delta(x) \ell^{-\Xi}(\delta(x)) \int_{\delta(x)/2}^{3 \delta(x)/2}  \int_{\delta(x)/2}^1 \frac{t^{N-2}}{((\delta(x)-h)+t)^{N}} ~dt~dh\\
	& \lesssim \delta(x) \ell^{-\Xi}(\delta(x)) \int_{\delta(x)/2}^1 t^{-1} \int_{-\delta(x)/2t}^{\delta(x)/2t}   \frac{1}{(|r|+1)^{N}} ~dr~dt\\
	& \lesssim \delta(x) \ell^{-\Xi}(\delta(x)) \int_{1/2}^{1/\delta(x)} \rho^{-1} \int_{0}^{1/\rho} \frac{1}{(r+1)^{N}} dr d\rho \leq C(\eta) \ell^{1-\Xi}(\delta(x)) \delta(x).
	\end{split}
	\end{equation*}
 \vspace{0.1cm}\\
\textbf{Case 2:} $x \in \Omega \setminus \Omega_{\frac{\eta}{2}}.$ \vspace{0.1cm}\\ Using the estimates from the proof of Lemma 3.2  \cite[p.37]{AbaGomVaz_2019}, for $\eta$ small enough, we obtain 
	\begin{equation*}
	\begin{split}
	I_1(x) 	&\leq \ell^{-\Xi}(\eta) \delta(x) \left( \int_{\Omega_{\eta/4}}\frac{G^{\Omega}(x,y)}{\delta(y)}~dy + \int_{\Omega_{\eta} \setminus \Omega_{\eta/4}} \frac{G^{\Omega}(x,y)}{\delta(y)}~dy \right) \\
	& \leq C \ell^{-\Xi}(\eta) \delta(x) \left( 1 + \frac{1}{\eta}\right)  \leq C(\mathcal{D}_{\Omega},\eta) \ell^{1-\Xi}(\delta(x)) \delta(x).
	\end{split}
	\end{equation*}
For any $x \in \Omega$, we have the following estimate for $I_2(x):$
	\begin{equation}\label{I2:est}
	\begin{split}
	I_2(x) &\leq  \frac{\ln^{-\Xi}\left(2\right)}{\eta} \mathbb{G}^\Omega[\chi_\Omega](x) \leq  C(\mathcal{D}_{\Omega}) \ell^{1-\Xi}(\delta(x)) \delta(x) \frac{1}{\eta \ell^{1-\Xi}\left(\frac{\eta}{2}\right)} \\
	& \leq C(\mathcal{D}_{\Omega},\eta) \ell^{1-\Xi}(\delta(x)) \delta(x).
	\end{split}
	\end{equation}
Finally, by collecting all the estimates in $\{\Omega_i\}$ in \textbf{Case 1}, and \textbf{Case 2} and \eqref{I2:est}, we get the desired upper estimate.
\end{proof}
Denote 
\begin{equation} \label{alphabeta}\beta:= \frac{2\gamma+ \zeta}{\gamma+1} \quad \kappa:=2-\beta = \frac{2-\zeta}{\gamma+1} 
\end{equation}
\begin{Lem}\label{lem:greenest}
For $0 <\epsilon, \eta < 1$, $N \geq 3$ and $\gamma > 1$, there exist positive constant $C_1, C_2 >0$ such that the following hold: 
	\begin{equation}\label{est:green:lower:1}
	\mathbb{G}^{\Omega}\left[\frac{1}{(\delta+\epsilon^{1/\kappa})^\beta} \chi_{\Omega_{\eta}}\right](x) \gtrsim  \left(\frac{1}{2} (\delta(x)+ \epsilon^{1/\kappa})^\kappa - \epsilon\right) \quad  \forall \ x \in \Omega_{\frac{\eta}{2}}
	\end{equation}
	and 
	\begin{equation}\label{est:green:lower:2}
	\mathbb{G}^{\Omega}\left[\frac{1}{(\delta+\epsilon^{1/\kappa})^\beta} \right](x) \gtrsim  (\delta(x)+ \epsilon^{1/\kappa})^\kappa - \epsilon \quad  \forall \ x \in \Omega_{\frac{\eta}{2}}^c.
	\end{equation}
\end{Lem}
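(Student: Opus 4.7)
The plan is to establish both estimates by combining the sharp Chen--Kim lower bound on the Green kernel, namely $G^{\Omega}(x,y) \gtrsim |x-y|^{2-N}\bigl(\delta(x)\delta(y)/|x-y|^2 \wedge 1\bigr)$, with exactly the same change-of-variables machinery that was used in the proofs of Lemma 4.2 and Lemma 4.3 (the diffeomorphism $\phi_x$ and the scalings $w_N = h\delta(x)$, $|w'| = t\delta(x)$). After a final one-dimensional substitution both inequalities reduce to explicitly integrable integrals.

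For the first estimate, I fix $x \in \Omega_{\eta/2}$ and restrict the integral defining $\mathbb{G}^{\Omega}[\chi_{\Omega_\eta}(\delta+\epsilon^{1/\kappa})^{-\beta}](x)$ to the subregion $\Omega_4 = \{y \in B(x,1) : 3\delta(x)/2 < \delta(y) < \eta\}$, on which the ratio $\delta(x)\delta(y)/|x-y|^2 \leq 1$ so that $G^\Omega(x,y) \gtrsim \delta(x)\delta(y)/|x-y|^N$. Pulling back by $\phi_x$ and performing the same double change of variables $w_N = h\delta(x)$, $|w'| = t\delta(x)$ used in Lemmas 4.2--4.3, the transverse integral reduces, via $r=t/(h-1)$, to $\int_0^{1/(h-1)\delta(x)} r^{N-2}/(1+r)^N\,dr$. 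For the range $h \in (3/2,\eta/\delta(x))$ one has $(h-1)\delta(x) \le \eta < 1$, so this transverse integral is bounded below by a positive absolute constant. Carrying out the remaining substitution $s = h\delta(x)$ I would obtain
\[
 \mathbb{G}^{\Omega}\!\left[\tfrac{\chi_{\Omega_\eta}}{(\delta+\epsilon^{1/\kappa})^\beta}\right]\!(x) \;\gtrsim\; \delta(x)\!\int_{3\delta(x)/2}^{\eta}\!\frac{ds}{(s+\epsilon^{1/\kappa})^{\beta}}.
\]

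Since $\gamma>1$ and $\zeta \in [0,2)$ give $\beta = (2\gamma+\zeta)/(\gamma+1) > 1$, this one-dimensional integral is evaluated explicitly as $\frac{1}{\beta-1}\bigl[(3\delta(x)/2+\epsilon^{1/\kappa})^{1-\beta}-(\eta+\epsilon^{1/\kappa})^{1-\beta}\bigr]$. To match the target $\frac{1}{2}(\delta(x)+\epsilon^{1/\kappa})^\kappa-\epsilon$, I would use $1-\beta=\kappa-1$, the identity $(\delta(x)+\epsilon^{1/\kappa})^\kappa-\epsilon = \kappa\!\int_0^{\delta(x)}(s+\epsilon^{1/\kappa})^{\kappa-1}ds$, and the fact that $\delta(x) \le \eta/2$, which guarantees that $(\eta+\epsilon^{1/\kappa})^{1-\beta}$ is strictly smaller than $(3\delta(x)/2+\epsilon^{1/\kappa})^{1-\beta}$ by a fixed fraction; splitting into the regimes $\delta(x) \le \epsilon^{1/\kappa}$ and $\delta(x) \ge \epsilon^{1/\kappa}$ and using monotonicity of $s \mapsto (s+\epsilon^{1/\kappa})^{\kappa-1}$ then yields the required comparison (with the explicit factor $\tfrac12$ absorbing the ratios that appear).

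For the second estimate, $x \in \Omega_{\eta/2}^{c}$ means $\delta(x) \ge \eta/2$, so the right-hand side $(\delta(x)+\epsilon^{1/\kappa})^\kappa-\epsilon$ is bounded above by $(\mathcal{D}_\Omega+1)^\kappa$. It therefore suffices to produce a uniform positive lower bound for the left-hand side. I would use the trivial bound $(\delta(y)+\epsilon^{1/\kappa})^{-\beta} \ge (\mathcal{D}_\Omega+1)^{-\beta}$ on $\Omega$ together with the classical estimate $\mathbb{G}^{\Omega}[\chi_\Omega](x) \gtrsim \delta(x) \ge \eta/2$, which gives the desired lower bound independent of $\epsilon$.

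The main obstacle is the algebraic comparison in the last part of Step~1 between $\frac{\delta(x)}{\beta-1}[(3\delta(x)/2+\epsilon^{1/\kappa})^{1-\beta}-(\eta+\epsilon^{1/\kappa})^{1-\beta}]$ and $\tfrac12(\delta(x)+\epsilon^{1/\kappa})^\kappa-\epsilon$: one must verify that the $-\epsilon$ correction on the right is exactly what is needed so the inequality holds uniformly in both asymptotic regimes $\delta(x) \ll \epsilon^{1/\kappa}$ and $\delta(x) \gg \epsilon^{1/\kappa}$, and this is precisely why the sharp extra factor $1/2$ is present in the statement of the first estimate.
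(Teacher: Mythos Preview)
Your argument for the second estimate coincides with the paper's. For the first estimate your approach is correct but takes a genuinely different route from the paper.

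The paper does \emph{not} restrict to $\Omega_4$; it restricts to the inner ball $\mathcal{B}_\delta^x:=B(x,\delta(x)/2)=\Omega_1$. On this ball one has $\delta(y)\asymp\delta(x)$ and $\min(\delta(x)\delta(y)/|x-y|^2,1)=1$, so $G^\Omega(x,y)\asymp |x-y|^{2-N}$ and $(\delta(y)+\epsilon^{1/\kappa})^{-\beta}\asymp(\delta(x)+\epsilon^{1/\kappa})^{-\beta}$. Integrating the Newtonian kernel over the ball gives in one line
\[
\mathbb{G}^\Omega\!\left[\tfrac{\chi_{\Omega_\eta}}{(\delta+\epsilon^{1/\kappa})^\beta}\right](x)\;\gtrsim\;\frac{\delta(x)^2}{(\delta(x)+\epsilon^{1/\kappa})^\beta},
\]
and the target follows from the elementary inequality $\delta(x)^2\ge\tfrac12(\delta(x)+\epsilon_1)^2-\epsilon_1^2$ together with $\epsilon_1^2/(\delta(x)+\epsilon_1)^\beta\le\epsilon_1^{2-\beta}=\epsilon$. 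No diffeomorphism, no one--dimensional integral, no regime splitting.

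Your route through $\Omega_4$ and the $\phi_x$ machinery does lead to the bound $\delta(x)\int_{3\delta(x)/2}^\eta(s+\epsilon^{1/\kappa})^{-\beta}\,ds$ you state, and the final comparison can indeed be carried out once you observe that the right-hand side $\tfrac12(\delta(x)+\epsilon^{1/\kappa})^\kappa-\epsilon$ is \emph{negative} whenever $\delta(x)<(2^{1/\kappa}-1)\epsilon^{1/\kappa}$, so only the regime $\delta(x)\gtrsim\epsilon^{1/\kappa}$ is nontrivial; there your two-case analysis goes through with constants depending on $\beta$ and $\eta$ but not on $\epsilon$. The trade-off is that your argument imports the full change-of-variables apparatus of Lemmas~\ref{lem:greest1}--\ref{lem:greest2} and ends in a somewhat delicate algebraic check, whereas the paper's choice of $\Omega_1$ bypasses all of this. (A minor point: on $\Omega_4$ the ratio $\delta(x)\delta(y)/|x-y|^2$ is only bounded by a fixed constant, not literally $\le 1$, but this does not affect the lower bound on $G^\Omega$.)
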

\begin{proof}
	Denote $\epsilon_1= \epsilon^{1/\kappa}$ and $\mathcal{B}_\delta^x:= \{y \in \Omega: |x-y|< \frac{\delta(x)}{2}\} \subset \Omega_\eta.$ Fix $x \in \Omega_{\frac{\eta}{2}}$. Then, for $y \in \mathcal{B}_\delta^x$, we have 	\begin{equation}\label{est:lem:lower}
	\left( \frac{\delta(x)}{|x-y|} \wedge 1 \right) \geq 1, \left( \frac{\delta(y)}{|x-y|} \wedge 1 \right) \geq 1 \quad \text{and }\ \frac{1}{(\delta(y)+ \epsilon_1)^{\beta}} \geq \left(\frac{2}{3}\right)^\beta \frac{1}{(\delta(x)+ \epsilon_1)^{\beta}}.
	\end{equation}
	Using \eqref{est:lem:lower}, we obtain the following
	\begin{equation}\label{est:lem:1}
	\begin{split}
	\mathbb{G}^{\Omega}&\left[\frac{1}{(\delta+\epsilon_1)^\beta} \chi_{\Omega_{\eta}}\right](x) = \int_{\Omega_{\eta}} \frac{\mathcal{G}^{\Omega}(x,y)}{(\delta(y)+ \epsilon_1)^\beta} ~dy \\
	&\geq C \int_{\mathcal{B}_\delta^x} \frac{1}{(\delta(y)+ \epsilon_1)^\beta} \frac{1}{|x-y|^{N-2}}  \left( \frac{\delta(x)\delta(y)}{|x-y|^2} \wedge 1 \right) \,dy,\\
	& \geq C \int_{\mathcal{B}_\delta^x} \frac{1}{(\delta(y)+ \epsilon_1)^\beta} \frac{1}{|x-y|^{N-2}}  \left( \frac{\delta(x)}{|x-y|} \wedge 1 \right) \left( \frac{\delta(y)}{|x-y|} \wedge 1 \right) \,dy,\\
	& \geq \frac{C'}{(\delta(x)+ \epsilon_1)^\beta} \int_{\mathcal{B}_\delta^x}  \frac{1}{|x-y|^{N-2}} ~dy = \frac{C'' \delta(x)^{2}}{(\delta(x)+ \epsilon_1)^\beta} \geq C'' \left(\frac{1}{2} (\delta(x)+ \epsilon_1)^\kappa - \frac{\epsilon_1^2}{(\delta(x)+ \epsilon_1)^\beta}\right)\\
	&\geq C_1 \left( \frac{1}{2}  (\delta(x)+\epsilon^{1/\kappa})^{\kappa} -\epsilon\right) . 
	\end{split}
	\end{equation}
	For the second claim: let $x \in \Omega_{\frac{\eta}{2}}^c$. Then by using \cite[Theorem 3.4]{AbaGomVaz_2019}, we get
	\begin{equation*}
	\begin{split}
	\mathbb{G}^{\Omega}\left[\frac{1}{(\delta+\epsilon_1)^\beta} \right](x) & \geq \frac{1}{(\mathcal{D}_{\Omega}+ 1)^\beta}  \mathbb{G}^{\Omega}[\chi_{\Omega}](x) \geq c\delta(x)  \geq c_2 (\delta(x)+ \epsilon^{1/\kappa})^\kappa - \epsilon,
	\end{split}
	\end{equation*}
	where $c_2=c_2(\eta, \kappa,\mathcal{D}_{\Omega})$.
\end{proof}
\subsection{Singular weights}
\subsubsection{Boundary behavior}
Let $\varphi_1$ and $\lambda_1$ are first eigenfunction and eigenvalue for the mixed local-nonlocal operator $(-\Delta) + (-\Delta)^s$ such that (\cite[Proposition 3.7]{ChaGomVaz_2020} and \cite[Proposition 5.3, 5.4]{BonFigVaz_2018}) 
\begin{equation}\label{est:eigen}
\varphi_1 \asymp \delta^\gamma \ \text{and} \ \varphi_1= \lambda_1 \mathbb{G}^\Omega[\varphi_1] \quad \text{in} \ \Omega
\end{equation}
\begin{Lem}\label{bdry:est}
Let $u_n$ be the weak solution of the problem $(P_n)$, then for any $\gamma>0$ and $\zeta \geq 0$, there exist a constant $C_0>0$ independent of $n$ and $x\in \Omega$ such that $C_0 \delta(x) \leq u_n(x)$. Moreover, for $\zeta \in [0,2)$, we have
\begin{enumerate}
    \item if $\zeta+ \gamma \leq 1$, there exist two constants $C_1, C_2>0$ independent of $n$ and $x \in \Omega$ such that
    \begin{equation}\label{est:upper-lower:appr:solu:weak}
	C_1 \delta(x) \leq u_n(x) \leq C_2 \delta(x) \left\{
	\begin{array}{ll}
	1  & \text{ if } \  \zeta + \gamma < 1, \vspace{0.1cm}\\
	\ln\left(\frac{\mathcal{D}_\Omega}{\delta(x)}\right) & \text{ if } \ \zeta + \gamma = 1,
	\end{array} 
	\right.\ \text{for} \ x \in \Omega.
	\end{equation}
	 \item if $\zeta+ \gamma > 1$, there exist two constants $C^1, C^2$ and $C^3>0$ independent of $n$ and $x \in \Omega$ such that
    \begin{equation}\label{est:upper-lower:appr:solu:strong}
	C^1 (\delta(x) + n^{\frac{-(1+\gamma)}{(2-\zeta)}})^{\frac{2-\zeta}{\gamma+1}} - C^2 n^{-1} \leq u_n(x) \leq C^3 \delta^{\frac{2-\zeta}{\gamma+1}}(x) \ \text{for} \ x \in \Omega.
	\end{equation}
    \end{enumerate}
\end{Lem}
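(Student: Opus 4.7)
My plan is to combine the Green integral representation $u_n = \mathbb{G}^\Omega[f_n / (u_n + 1/n)^\gamma]$ (valid by the regularity from Lemma \ref{Lem:apriori}) with the Green estimates of Lemmas \ref{lem:greest1}, \ref{lem:greest2}, and \ref{lem:greenest}, together with a bootstrap/comparison argument. The first step is to establish $u_n \gtrsim \delta$ uniformly in $n$: the monotonicity from Lemma \ref{Lem:apriori}(ii) reduces this to a bound on $u_1$, and comparing $u_1$ to the first eigenfunction $\varphi_1 \asymp \delta$ via \eqref{est:eigen} produces the universal lower bound, exactly as in Theorem \ref{Green:thm:1}.

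For the weak regime $\gamma + \zeta \leq 1$, I substitute the Step~1 bound into the denominator and use \eqref{weight:approx} to obtain $f_n \lesssim \delta^{-\zeta}$, so that $u_n(x) \lesssim \mathbb{G}^\Omega[\delta^{-(\gamma+\zeta)}](x)$. For $\gamma + \zeta < 1$ the classical Green bound yields $\lesssim \delta(x)$; for the borderline $\gamma + \zeta = 1$, a variant of the upper Green estimate of Lemma \ref{lem:greest2} with $\Xi = 0$, carried out by splitting $\Omega$ into the same five pieces $\Omega_1,\dots,\Omega_5$ as in that proof, produces the logarithmic correction $\lesssim \delta(x) \ln(\mathcal{D}_\Omega/\delta(x))$.

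In the strong regime $\gamma + \zeta > 1$, the sharp exponent $\kappa = (2-\zeta)/(\gamma+1)$ is precisely the fixed point of the recursion $a \mapsto 2 - \zeta - \gamma a$, so a single Green iteration is insufficient. My plan for the upper bound is to construct a supersolution $w = A(\delta + n^{-1/\kappa})^\kappa$ and verify, through an upper companion to Lemma \ref{lem:greenest}, that $\mathcal{L} w \geq f_n / (w + 1/n)^\gamma$ for $A$ sufficiently large and independent of $n$; the comparison principle from \cite{Biagi_Valdi, Biagi_Vecchi} then yields $u_n \leq w \lesssim \delta^\kappa$. For the refined lower bound I exploit this upper bound together with the algebraic identity $\kappa\gamma + \zeta = \beta$: writing $\epsilon_n = 1/n$, Step~3 gives $(u_n + 1/n)^\gamma \lesssim (\delta + \epsilon_n^{1/\kappa})^{\kappa\gamma}$ and \eqref{weight:approx} gives $f_n \gtrsim (\delta + \epsilon_n^{1/\kappa})^{-\zeta}$, so the integrand is bounded below by a multiple of $(\delta + \epsilon_n^{1/\kappa})^{-\beta}$; applying \eqref{est:green:lower:1}--\eqref{est:green:lower:2} of Lemma \ref{lem:greenest} then delivers $u_n(x) \gtrsim (\delta(x) + n^{-(\gamma+1)/(2-\zeta)})^{(2-\zeta)/(\gamma+1)} - C/n$, which is precisely \eqref{est:upper-lower:appr:solu:strong}.

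The main obstacle is the upper bound in the strong case. The trivial bound $u_n \geq C_0\delta$ only returns $u_n \lesssim \delta^{2-\gamma-\zeta}$ after one Green iteration, and iterating naively does not obviously converge to the fixed-point exponent $\kappa$ with controlled constants. The supersolution approach is cleaner but requires a careful check that a single multiplicative constant $A$ simultaneously controls both the local operator $-\Delta$ and the nonlocal operator $(-\Delta)^s$ applied to $(\delta + \epsilon)^\kappa$, uniformly in the regularizing parameter $\epsilon = n^{-1/\kappa}$. This is exactly where the explicit upper analogue of Lemma \ref{lem:greenest} (not stated in the excerpt but provable by the same five-piece boundary decomposition) becomes indispensable.
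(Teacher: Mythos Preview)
Your treatment of the universal lower bound $u_n \gtrsim \delta$ and of the weak regime $\gamma+\zeta \leq 1$ is essentially the paper's: the paper also feeds $u_n \geq C_0\delta$ into the Green representation and reads off $u_n \lesssim \mathbb{G}^\Omega[\delta^{-(\gamma+\zeta)}]$ directly from Lemma~\ref{est:green}, which already contains the logarithmic borderline case, so there is no need to rerun the five--piece decomposition with $\Xi=0$.

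In the strong regime $\gamma+\zeta>1$ you and the paper proceed in \emph{opposite orders}, and your order is what generates the obstacle you flag in your last paragraph. The paper establishes the sharp \emph{lower} bound first: it introduces the solution $v_{\epsilon_n}$ of the linear problem $\mathcal{L}v = (\delta+\epsilon_n^{1/\kappa})^{-(2-\kappa)}$, for which $\mathcal{L}v_{\epsilon_n}$ is known exactly and $v_{\epsilon_n}\leq C_4\delta^\kappa$ follows from Lemma~\ref{est:green}. Scaling by a \emph{small} constant $c_\eta$ turns $\underline u^{\epsilon_n}=c_\eta v_{\epsilon_n}$ into a subsolution of $(P_n)$, because the required inequality $c_\eta(\delta+\epsilon_n^{1/\kappa})^{-(2-\kappa)} \leq f_n/(\underline u^{\epsilon_n}+\epsilon_n)^\gamma$ reduces (via $\kappa\gamma+\zeta=2-\kappa$) to $c_\eta^{1+\gamma}\lesssim \mathcal{G}_1$, which is always satisfiable. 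Comparison gives \eqref{est:appr:solu:large:lower}, and \emph{only then} is this lower bound fed back into the Green representation to yield $u_n \lesssim \mathbb{G}^\Omega[\delta^{-(2-\kappa)}]\lesssim \delta^\kappa$ in a single line. Your supersolution route faces a genuine gap: the ``upper companion to Lemma~\ref{lem:greenest}'' you invoke is an upper bound on $\mathbb{G}^\Omega$, which does \emph{not} translate into a pointwise lower bound on $\mathcal{L}$ applied to the explicit function $A(\delta+\epsilon)^\kappa$; and if one tries $w=A v_{\epsilon_n}$ instead, the supersolution inequality forces $A$ large while positivity of $w+\epsilon_n$ near the boundary (where $v_{\epsilon_n}\gtrsim \tfrac12(\delta+\epsilon_n^{1/\kappa})^\kappa-\epsilon_n$ can be nearly zero) forces $A$ small, and the two constraints are in general incompatible. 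The subsolution--first order avoids this entirely because small multiplicative constants are always admissible for subsolutions.
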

\begin{proof}
To prove the boundary behavior, we divide our study into three cases: \vspace{0.1cm}\\
\textbf{Case 1:} $\zeta+ \gamma \leq 1$ \vspace{0.1cm}\\
 Using the integral representation in \eqref{int:rep}, lower boundary behavior in \eqref{est:appr:solu:small:lower}, $f_n \leq f$, and Lemma \ref{est:green}, we get
	\begin{equation}\label{est:appr:solu:small:upper}
	\begin{split}
	u_n(x) & =\int_{\Omega} \frac{G(x,y)  f_n(y)}{(u_n + \frac{1}{n})^\gamma} ~dy \leq \int_{\Omega} \frac{G(x,y)  f(y)}{u_n^\gamma} ~dy  \\
	&\lesssim  \mathbb{G}^{\Omega}\left[\frac{1}{\delta^{\gamma+\zeta}}\right] \lesssim \delta(x) \left\{
	\begin{array}{ll}
	1  & \text{ if } \  \zeta + \gamma < 1, \vspace{0.1cm}\\
	\ln\left(\frac{\mathcal{D}_\Omega}{\delta(x)}\right) & \text{ if } \ \zeta + \gamma = 1,
	\end{array} 
	\right. 
	\end{split}
	\end{equation}
	Using \eqref{est:appr:solu:small:upper} and estimates in Theorem \ref{Green:thm:1}, we obtain our first claim. \vspace{0.1cm}\\
	\textbf{Case 2:} $\zeta+ \gamma > 1$. \vspace{0.1cm}\\
For $n \in \mathbb{N}$ and $\kappa >0$, we define $h_{\epsilon_n}:= \frac{1}{(\delta+\epsilon_n^{1/\kappa})^{2-\kappa}} \in L^\infty(\Omega)$ with $\epsilon_n=\frac{1}{n}.$ Since $\gamma>1$ and $\zeta<2$, we choose $$\kappa=\frac{2-\zeta}{\gamma+1} \quad \text{such that}\quad \kappa \gamma + \zeta = 2-\kappa.$$
Then, from Lemma \ref{lem:prelimi}, there exists a unique positive weak solution $v_{\epsilon_n} \in H_0^1(\Omega) \cap L^\infty(\Omega)$ of the following problem
	\begin{equation}\tag{$S_{\epsilon_n}$} \left\{
\begin{aligned}
-\Delta v_{\epsilon_n} + (-\Delta)^s v_{\epsilon_n} & =h_{\epsilon_n}, \quad v_{\epsilon_n}>0 \quad &&\text{ in }  \Omega, \\
v_{\epsilon_n} & = 0 &&  \text{ in } \ \mathbb{R}^N \setminus \Omega,
\end{aligned}
\right.
\end{equation}
Set $\epsilon_1^n= {\epsilon_n}^{1/\kappa}>0$. Using the uniqueness property of $v_{\epsilon_n}$, integral representation of the solution via Green function and Lemma \ref{est:green}, we obtain
	\begin{equation}\label{est:weight:approx1}
	\begin{split}
	v_{\epsilon_n}= \mathbb{G}^\Omega\left[\frac{1}{(\delta+\epsilon_1^n)^{2-\kappa}}\right] \leq \mathbb{G}^\Omega\left[\frac{1}{\delta^{2-\kappa}}\right] \leq C \delta^{\kappa} \quad \text{in }  \Omega \quad \text{if} \ \kappa >1,
	\end{split}
	\end{equation}
	and on the other hand for $0< \eta <1$, Lemma \ref{lem:greenest} gives
	\begin{equation}\label{est:weight:approx2}
	\begin{split}
	v_{\epsilon_n}&(x)= \mathbb{G}^\Omega\left[\frac{1}{(\delta+\epsilon_1^n)^{2-\kappa}}\right](x) \\
	& 
	\begin{aligned}
	\gtrsim \left\{
	\begin{array}{ll}
	\mathbb{G}^{\Omega}\left[\frac{1}{(\delta+\epsilon_1^n)^{2-\kappa}} \chi_{\Omega_{\eta}}\right](x) \gtrsim  \left(\frac{1}{2}(\delta(x)+ \epsilon_1^n)^\kappa - \epsilon_n \right) &  \text{ if } x \in \Omega_{\frac{\eta}{2}},\\
	 (\delta(x)+ \epsilon_1^n)^\kappa - \epsilon_n &  \text{ if } x \in \Omega \setminus \Omega_{\frac{\eta}{2}}, \\
	\end{array} 
	\right.
	\end{aligned}\\
	& \gtrsim \left( \frac{1}{2}(\delta(x)+ \epsilon_1^n)^\kappa - \epsilon_n\right),   \quad x \in \Omega \quad \text{if} \ \gamma >1 \ \text{and} \ \zeta<2.\\
	\end{split}
	\end{equation}
Collecting the estimates in \eqref{est:weight:approx1}-\eqref{est:weight:approx2}, there exists a constant $C_3, C_4>0$ independent of $n$ such that
	\begin{equation}\label{est:weight:approx3}
	C_3 \left(\frac{1}{2}(\delta(x)+ \epsilon_1^n)^\kappa - \epsilon_n\right)   \leq v_{\epsilon_n}(x) \leq C_4 \delta(x)^\kappa, \ x \in \Omega \quad \text{if} \ \gamma >1 \ \text{and} \ \zeta<2.   
	\end{equation}
	
\noindent Define 
	\begin{equation*}
	\underline u^{\epsilon_n }= c_\eta v_{\epsilon_n} \ \text{with} \ 0< c_\eta < \frac{C_1}{C_4} \left(\frac{\eta}{2}\right)^{1-\kappa},
	\end{equation*}
	where $C_1, C_4$ are defined in \eqref{est:upper-lower:appr:solu:weak} and \eqref{est:weight:approx3} respectively. We note that $c_\eta$ is independent of $\epsilon_n$ such that
	$$ \underline u^{\epsilon_n } \leq u_n \ \text{in } \Omega \setminus \Omega_{\frac{\eta}{2}} 
	$$
	and
	$$ 
	\left(\underline u^{\epsilon_n } + \epsilon_n\right) \leq 2 C_4 c_\eta (\delta + \epsilon_1^n)^\kappa + (1-C_4 c_\eta) \epsilon_n \quad \text{ in } \Omega.
	$$
	
\noindent If $2 C_4 c_\eta (\delta(x)+ \epsilon_1^n)^\kappa \geq (1-C_4 c_\eta) \epsilon_n$ then by choosing $\eta$ small enough such that $c_\eta < \mathcal{G}_1^{\frac{1}{\gamma+1}} (4C_4)^{-\frac{\gamma}{\gamma+1}}$, we have
\[
\begin{split}
    (-\Delta) \underline u^{\epsilon_n } + (-\Delta)^s \underline u^{\epsilon_n } = c_\eta  (\delta+ \epsilon_1^n)^{-(2-\kappa)}  < \mathcal{G}_1(4C_4 c_\eta)^{-\gamma} (\delta+ \epsilon_1^n)^{-\kappa \gamma-\zeta} \leq f_{\epsilon_n}(x) \left(\underline u^{\epsilon_n } + \epsilon_n\right)^{-\gamma} \quad  \text{in} \ \Omega.
\end{split}
\]
where $\mathcal{G}_1$ is defined in \eqref{weight:approx}. \vspace{0.1cm}\\
If $2 C_4 c_\eta (\delta(x)+ \epsilon_1^n)^\kappa \leq (1-C_4 c_\eta) \epsilon_n$ then again by choosing $\eta$ small enough such that $C_4 c_\eta <1$ and $c_\eta < \mathcal{G}_1 (2(1-C_4 c_\eta))^{-q} $ , we have 
\[
\begin{split}
  (-\Delta) \underline u^{\epsilon_n } + (-\Delta)^s \underline u^{\epsilon_n }& =c_\eta  (\delta+ \epsilon_1^n)^{-(2-\kappa)} \leq \mathcal{G}_1 (2(1-C_4 c_\eta))^{-q}  (\delta+ \epsilon_1^n)^{-\kappa \gamma -\zeta}\\
  &\leq f_{\epsilon_n}(x)(1-C_4 c_\eta)^{-q} (2{\epsilon_n})^{-\gamma} \leq f_{\epsilon_n}(x)\left(\underline u^{\epsilon_n } + \epsilon_n\right)^{-\gamma} \quad  \text{in} \ \Omega.
\end{split}
\]
Now, by considering both the cases and applying the weak comparison principle in $\Omega_{\frac{\eta}{2}}$ for $u_{n}$ and $ \underline u^{\epsilon_n}$, we get $\underline u^{\epsilon_n} \leq u_n$ in $\Omega$, namely there exist constants $0 < C^1, C^2 < \frac{1}{2}$ (by taking $\eta$ small enough) such that
	\begin{equation}\label{est:appr:solu:large:lower}
	C^1 (\delta + \epsilon_1^n)^\kappa - C^2 {\epsilon_n} \leq u_n \quad \text{in } \Omega.
	\end{equation}
Finally, by using the integral representation, we obtain
	\begin{equation}\label{est:appr:solu:large:upper}
	\begin{split}
	u_n & = \mathbb{G}^{\Omega}\left[\frac{f_{\epsilon_n}(x)}{(u_n + \epsilon_n)^\gamma}\right] \leq \mathbb{G}^{\Omega}\left[\frac{f_{\epsilon_n}(x)}{(C^1 (\delta+ \epsilon_1^n)^\kappa +  (1- C^2) {\epsilon_n})^\gamma}\right] \\
	& \lesssim \mathbb{G}^{\Omega}\left[\frac{1}{\delta^{\kappa \gamma+ \zeta}}\right] =\mathbb{G}^{\Omega}\left[\frac{1}{\delta^{2-\kappa }}\right] \lesssim  \delta^\kappa \quad \text{if} \ \gamma >1 \ \text{and} \ \zeta <2. 
	\end{split}
	\end{equation}
\end{proof}
\subsubsection{Sobolev regularity estimates}
\begin{Lem}\label{sobolev:reg}
Let $\gamma>0$, $\zeta \in [0,2)$ and $u_n$ be the weak solution of the problem $(P_n)$. Then,
    \begin{equation*}
	u_n^{\frac{\mathfrak{L}+1}{2}} \ \text{is uniformly bounded in} \  H_0^1(\Omega) \ \text{for any} \ \mathfrak{L} > \left\{
	\begin{array}{ll}
	0  & \text{ if } \  \zeta+ \gamma \leq  1, \vspace{0.1cm}\\
	\mathfrak{L}^* & \text{ if } \ \zeta+ \gamma > 1,
	\end{array} 
	\right.
	\ \text{where} \ \mathfrak{L}^*:= \frac{\gamma + \zeta-1}{2-\zeta}.
	\end{equation*}
\end{Lem}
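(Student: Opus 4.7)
The plan is to adapt the test-function method of Lemmas \ref{lem:apri:weak} and \ref{lem:lower:reg} to the singular weight setting, combining the sharp two-sided boundary estimates of Lemma \ref{bdry:est} with the pointwise control \eqref{weight:approx} on the approximating datum. For $\epsilon \in (0, 1/n]$ I would insert $(u_n + \epsilon)^{\mathfrak{L}} - \epsilon^{\mathfrak{L}} \in H_0^1(\Omega) \cap L^\infty(\Omega)$ as a test function in \eqref{def:notion:approx}. The local part produces $\frac{4\mathfrak{L}}{(\mathfrak{L}+1)^2}\int |\nabla (u_n + \epsilon)^{(\mathfrak{L}+1)/2}|^2\,dx$, the nonlocal part is nonnegative by Lemma \ref{prelim:inequa}(i), and the singular right-hand side is dominated by $\int f_n (u_n+\epsilon)^{\mathfrak{L}-\gamma}\,dx$ after absorbing the denominator $(u_n+1/n)^{\gamma}$. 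Sending $\epsilon\to 0$ at fixed $n$ (Fatou when $\mathfrak{L}\geq 1$, monotone convergence otherwise) reduces the task to the uniform-in-$n$ control of $\int_\Omega f_n u_n^{\mathfrak{L}-\gamma}\,dx$.

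For $\zeta + \gamma \leq 1$, Lemma \ref{bdry:est}(1) gives $u_n\gtrsim \delta$ and $u_n\lesssim \delta\,\ell^{1/(2-\zeta)}(\delta)$, hence $u_n$ is uniformly bounded in $L^{\infty}$. If $\mathfrak{L}\geq \gamma$, the integrand is controlled by $f\lesssim \delta^{-\zeta}$, integrable since $\zeta<1$; if $0<\mathfrak{L}<\gamma$, then $u_n^{\mathfrak{L}-\gamma}\leq C\delta^{\mathfrak{L}-\gamma}$ and the integrability exponent $-\zeta + \mathfrak{L}-\gamma$ exceeds $-1$ because $\mathfrak{L}>0\geq \gamma+\zeta-1$.

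For $\zeta + \gamma > 1$, set $\kappa = (2-\zeta)/(\gamma+1)$ and $a_n = n^{-1/\kappa}$. The upper estimate $u_n\leq C\delta^{\kappa}$ from Lemma \ref{bdry:est}(2) combined with $f_n\leq \mathcal{G}_2(\delta+a_n)^{-\zeta}$ from \eqref{weight:approx} yields, in the regime $\mathfrak{L}\geq \gamma$,
\[\int_{\Omega} f_n u_n^{\mathfrak{L}-\gamma}\,dx \leq C\int_{\Omega}(\delta+a_n)^{-\zeta + \kappa(\mathfrak{L}-\gamma)}\,dx,\]
which is uniformly finite iff $-\zeta+\kappa(\mathfrak{L}-\gamma)>-1$, i.e.\ $\mathfrak{L}>\mathfrak{L}^{*}$. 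For the sub-range $\mathfrak{L}\in(\mathfrak{L}^{*},\gamma)$, I split $\Omega = \{\delta\geq M a_n\}\cup\{\delta<M a_n\}$: on the interior piece the shifted lower bound $u_n \geq C_1(\delta+a_n)^{\kappa} - C_2/n$ gives $u_n\gtrsim (\delta+a_n)^{\kappa}$ for $M$ sufficiently large, reproducing the above integrability bound; on the thin boundary strip I tune the shift $\epsilon = C_2/n$ in the test function so that $u_n+\epsilon\gtrsim (\delta+a_n)^{\kappa}$ uniformly, controlling $(u_n+\epsilon)^{\mathfrak{L}-\gamma}$ there and transferring the resulting estimate to $u_n^{(\mathfrak{L}+1)/2}$ via monotone convergence as $\epsilon\to 0$ at fixed $n$.

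The main obstacle is precisely the sub-range $\mathfrak{L}\in (\mathfrak{L}^{*},\gamma)$, nonempty only when $\zeta<1$ and $\zeta+\gamma>1$: the pointwise lower bound of Lemma \ref{bdry:est}(2) can turn negative in a shrinking boundary layer, while the crude estimate $u_n\geq C_0\delta$ from Theorem \ref{Green:thm:1} only returns the weaker threshold $\gamma+\zeta-1$. Tuning the shift $\epsilon=C_2/n$ to exactly match the additive error in Lemma \ref{bdry:est}(2) is the key maneuver that absorbs this deficit and delivers the sharp exponent $\mathfrak{L}^{*}$, matching the optimal limit behavior $u\asymp \delta^{\kappa}$ predicted by Theorem \ref{result:6}.
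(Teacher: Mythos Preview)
Your overall strategy is the same as the paper's: insert $(u_n+\epsilon)^{\mathfrak L}-\epsilon^{\mathfrak L}$ as a test function, drop the nonnegative nonlocal term, and reduce everything to a uniform bound on $\int_\Omega f_n\,u_n^{\mathfrak L-\gamma}\,dx$ via the boundary behaviour from Lemma~\ref{bdry:est}. The paper's Case~3 argument simply writes $u_n^{\,p}\lesssim \delta^{p\kappa}$ with $p=\mathfrak L-\gamma-\zeta/\kappa$ using both sides of $u_n\asymp\delta^\kappa$; for $p<0$ this tacitly requires a uniform lower bound $u_n\gtrsim\delta^\kappa$, which Lemma~\ref{bdry:est}(2) does \emph{not} provide directly. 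You are right to flag the sub-range $\mathfrak L\in(\mathfrak L^*,\gamma)$ (nonempty exactly when $\zeta<1$, $\zeta+\gamma>1$) as the delicate spot---you are in fact being more careful here than the paper.

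The gap is in your ``transfer'' step. After tuning $\epsilon=C^2/n$ you obtain a uniform bound on $\int|\nabla(u_n+\epsilon)^{(\mathfrak L+1)/2}|^2$, but for $\mathfrak L<1$ the quantity $\int(u_n+\epsilon)^{\mathfrak L-1}|\nabla u_n|^2$ is monotone \emph{increasing} as $\epsilon\downarrow 0$, so a bound at the fixed level $\epsilon=C^2/n$ does not dominate the limit, and ``monotone convergence as $\epsilon\to 0$'' cannot carry the estimate to $u_n^{(\mathfrak L+1)/2}$. The clean fix is to avoid absorbing the denominator: for every $\epsilon\in(0,1/n]$ write
\[
\frac{f_n}{(u_n+1/n)^{\gamma}}\bigl[(u_n+\epsilon)^{\mathfrak L}-\epsilon^{\mathfrak L}\bigr]
\;\leq\; f_n\,(u_n+1/n)^{\mathfrak L-\gamma},
\]
and then use Lemma~\ref{bdry:est}(2) in the form $u_n+1/n\geq u_n+C^2/n\geq C^1(\delta+a_n)^{\kappa}$ together with $f_n\leq\mathcal G_2(\delta+a_n)^{-\zeta}$ to get $\int f_n(u_n+1/n)^{\mathfrak L-\gamma}\leq C\int(\delta+a_n)^{-\zeta+\kappa(\mathfrak L-\gamma)}\leq C$ whenever $\mathfrak L>\mathfrak L^*$. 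This right-hand side is \emph{independent of~$\epsilon$}, so now monotone convergence (or Fatou) on the left as $\epsilon\to 0$ legitimately yields $\int|\nabla u_n^{(\mathfrak L+1)/2}|^2\leq C$ uniformly in~$n$, for all $\mathfrak L>\mathfrak L^*$ and without any splitting of~$\Omega$.
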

\begin{proof}
Let $n \in \mathbb{N}$ and $u_n$ be the weak solution of the problem $(P_n)$ given by Lemma \ref{Lem:apriori}. Since, $u_n \in L^\infty(\Omega) \cap H_0^1(\Omega)$ and positive, then for any $\epsilon>0$ and $\mathfrak{L} >0$, $(u_n + \epsilon)^{\mathfrak{L}} - \epsilon^{\mathfrak{L}}$ belongs to $H_0^1(\Omega)$, therefore, an admissible test function in \eqref{def:notion:approx}. Taking it so for $\epsilon \in (0, \frac{1}{n})$ and passing $\epsilon \to 0$ as in the proof of Lemma \ref{lem:apri:weak}(i), we obtain
\begin{equation}\label{est:apri:n-1}
 \begin{split}
    \frac{4 \mathfrak{L}}{(\mathfrak{L} +1)^2}\int_{\Omega} |\nabla u_n^{\frac{\mathfrak{L} +1}{2}}|^2 ~dx &+ \frac{2 C(N,s) \mathfrak{L}}{(\mathfrak{L} +1)^2} \int_{\mathbb{R}^N} \int_{\mathbb{R}^N} \frac{((u_n^\frac{\mathfrak{L} +1}{2}(x)- u_n^\frac{\mathfrak{L} +1}{2}(y))^2}{|x-y|^{N+2s}} ~dx ~dy \\
    &\leq \int_{\Omega} f_n(x) u_n^{\mathfrak{L} - \gamma} ~dx:= \mathfrak{G}(u_n).
\end{split}   
\end{equation}
Now to estimate the right hand side term in \eqref{est:apri:n-1}, we divide the proof into three cases. Using \eqref{weight:approx} and \eqref{est:upper-lower:appr:solu:weak}, we obtain the following: \vspace{0.1cm}\\
\textbf{Case 1:} $\zeta+ \gamma < 1$ 
\begin{equation}\label{est:apri:n-2}
  \begin{split}
    \mathfrak{G}(u_n) &\leq \int_{\Omega} f_n(x) u_n^{\mathfrak{L} - \gamma} ~dx \leq \mathcal{G}_2 \int_{\Omega} \frac{u_n^{\mathfrak{L} - \gamma}}{\delta^\zeta(x)} ~dx \lesssim \int_{\Omega} u_n^{\mathfrak{L} - \gamma -\zeta} ~dx \\
    & \lesssim \int_{\Omega} \delta^{\mathfrak{L} - \gamma -\zeta}(x) ~dx \leq C \quad \text{if} \quad \mathfrak{L} > \gamma + \zeta-1.
\end{split}   
\end{equation}
where $\mathfrak{L} > \gamma + \zeta-1$ holds trivially. 
\vspace{0.1cm}\\
\textbf{Case 2:} $\zeta+ \gamma = 1$ \vspace{0.1cm}\\Since $\mathfrak{L} >0$, we can choose $\chi \in (0, \mathfrak{L})$ small enough such that 
\[
\max\left\{\ln^\zeta\left(\frac{\mathcal{D}_\Omega}{\delta(x)}\right), \ln^{\mathfrak{L}-\gamma}\left(\frac{\mathcal{D}_\Omega}{\delta(x)}\right)\right\} \leq C(\mathcal{D}_\Omega, \zeta, \mathfrak{L}, \gamma) \delta^{-\chi}(x) \quad \text{for all} \ x \in \Omega.
\]
Then, we have
\begin{equation}\label{est:apri:n-3}
  \begin{split}
    \mathfrak{G}(u_n) &\leq \int_{\Omega} f_n(x) u_n^{\mathfrak{L} - \gamma} ~dx \leq \mathcal{G}_2 \int_{\Omega}  \frac{u_n^{\mathfrak{L} - \gamma}}{\delta^\zeta(x)} ~dx \lesssim \int_{\Omega} \ln^\zeta\left(\frac{\mathcal{D}_\Omega}{\delta(x)}\right) u_n^{\mathfrak{L} - 1} ~dx \\
    & \lesssim \left\{
	\begin{array}{ll}
	\int_{\Omega} \ln^{\mathfrak{L}-\gamma}\left(\frac{\mathcal{D}_\Omega}{\delta(x)}\right) \delta^{\mathfrak{L} - 1}(x) ~dx  & \text{ if } \  \mathfrak{L}  \geq 1, \vspace{0.1cm}\\
	\int_{\Omega} \ln^\zeta\left(\frac{\mathcal{D}_\Omega}{\delta(x)}\right) \delta^{\mathfrak{L} - 1}(x) ~dx & \text{ if } \ \mathfrak{L} < 1,
	\end{array} 
	\right.\\
	& \lesssim \int_{\Omega} \delta^{\mathfrak{L} - 1-\chi}(x) ~dx \leq C \quad \text{if} \ \mathfrak{L} > \chi.
\end{split}   
\end{equation}
\textbf{Case 3:} $\zeta+ \gamma > 1$ 
\begin{equation}\label{est:apri:n-4}
  \begin{split}
    \mathfrak{G}(u_n) &\leq \int_{\Omega} f_n(x) u_n^{\mathfrak{L} - \gamma} ~dx \leq \mathcal{G}_2 \int_{\Omega} \frac{u_n^{\mathfrak{L} - \gamma}}{\delta^\zeta(x)} ~dx = \mathcal{G}_2 \int_{\Omega} u_n^{\mathfrak{L} - \gamma} \left(\delta^\frac{2-\zeta}{(\gamma+1)}(x)\right)^\frac{-\zeta(\gamma+1)}{2-\zeta} ~dx\\
    &\lesssim \int_{\Omega} u_n^{\mathfrak{L} - \gamma -\frac{\zeta(\gamma+1)}{2-\zeta}} ~dx \lesssim \int_{\Omega} \delta^{\left(\mathfrak{L} - \gamma\right) \frac{2-\zeta}{(\gamma+1)} -\zeta}(x) ~dx \leq C \quad \text{if} \quad \mathfrak{L} > \frac{\gamma + \zeta-1}{2-\zeta}.
\end{split}   
\end{equation}
Collecting the estimates in \eqref{est:apri:n-2}-\eqref{est:apri:n-4}, we obtain:
\[
     \int_{\Omega} |\nabla u_n^{\frac{\mathfrak{L} +1}{2}}|^2 ~dx \leq  \frac{(\mathfrak{L} +1)^2}{4 \mathfrak{L}}\mathfrak{G}(u_n) \leq C(\mathfrak{L}, \gamma, \zeta) \quad \text{for all} \ \mathfrak{L} > \left\{
	\begin{array}{ll}
	0  & \text{ if } \  \zeta+ \gamma \leq  1, \vspace{0.1cm}\\
	\frac{\gamma + \zeta-1}{2-\zeta} & \text{ if } \ \zeta+ \gamma > 1.
	\end{array} 
	\right.
\]
\end{proof}
\section{Existence, summability and Sobolev regularity results}\label{sec:proof}
\subsection{Proof of Theorem \ref{result:1}}
Let $(r, \gamma) \in \mathcal{P}_{r,\gamma} \cap \{(r, \gamma): r \in [1, r^\sharp), 0 \leq \gamma <1\}$ and $u_n$ be the weak solution of the problem $(P_n)$ in the sense that
\begin{equation}\label{sense:approx}
\begin{split}
    \int_{\Omega} \nabla u_n \cdot \nabla \psi ~dx + \frac{C(N,s)}{2} \int_{\mathbb{R}^N} \int_{\mathbb{R}^N} &\frac{(u_n(x)- u_n(y))(\psi(x)- \psi(y))}{|x-y|^{N+2s}} ~dx ~dy \\
    &= \int_{\Omega} \frac{f_n(x)}{(u_n + \frac{1}{n})^\gamma} \psi ~dx \quad \forall \psi \in H_0^1(\Omega).
\end{split}
\end{equation}
From Lemma \ref{lem:apri:weak}(iv), we know that $u_n$ is uniformly bounded in $W_0^{1,q}(\Omega)$ with $q:= \frac{Nr(1+\gamma)}{N-r(1-\gamma)}.$ Then, there exists a $u \in W_0^{1, q}(\Omega)$ such that
\begin{equation}\label{convergence:est:case:1}
\begin{split}
    & u_n \rightharpoonup u \ \text{in} \ W_0^{1,q}(\Omega), \ \ u_n \to u \ \text{in} \ L^{j}(\Omega) \ \ \text{for} \ 1 \leq j < \sigma_r\ \text{and a.e. in} \ \mathbb{R}^N.
\end{split}
\end{equation}
Since, for any $\psi \in W_0^{1,q'}(\Omega)$, \[W_0^{1,q}(\Omega) \ni f \mapsto \int_{\Omega} \nabla f \cdot \nabla \psi ~dx + \frac{C(N,s)}{2} \int_{\mathbb{R}^N} \int_{\mathbb{R}^N} \frac{(f(x)- f(y))(\psi(x)- \psi(y))}{|x-y|^{N+2s}} ~dx ~dy\]
is a bounded linear functional on $W_0^{1,q}(\Omega).$ Then for every $\psi \in W_0^{1, q'}(\Omega)$, we obtain
\begin{equation}\label{exis:est1}
\begin{split}
  \int_{\Omega} \nabla u_n &\cdot \nabla \psi ~dx + \frac{C(N,s)}{2} \int_{\mathbb{R}^N} \int_{\mathbb{R}^N} \frac{(u_n(x)- u_n(y))(\psi(x)- \psi(y))}{|x-y|^{N+2s}} ~dx ~dy \\
& \to \int_{\Omega} \nabla u \cdot \nabla \psi ~dx + \frac{C(N,s)}{2} \int_{\mathbb{R}^N} \int_{\mathbb{R}^N} \frac{(u(x)- u(y))(\psi(x)- \psi(y))}{|x-y|^{N+2s}} ~dx ~dy \quad \text{as} \ n \to \infty
\end{split}  
\end{equation}
and using the fact that $f_n \leq f$, $C_1(\omega) \leq u_1 \leq u_n$ a.e. in $\omega \Subset \Omega$ and Lebesgue dominated convergence theorem we get
\begin{equation}\label{exis:est2}
   \begin{split}
   \int_{\Omega} \frac{f_n(x)}{(u_n + \frac{1}{n})^\gamma} \psi ~dx \to \int_{\Omega} \frac{f(x)}{u^\gamma} \psi ~dx  \quad \text{as} \ n \to \infty \quad \text{for $\psi \in L^{r'}(\Omega)$ with $\supp(\psi) \Subset \Omega$}.
\end{split} 
\end{equation}
Passing limit $n \to \infty$ in \eqref{sense:approx} and using \eqref{exis:est1}-\eqref{exis:est2}, we obtain our claim.
\qed 

\subsection{Proof of Theorem \ref{result:2}}
Let $(r, \gamma) \in \mathcal{P}_{r,\gamma} \setminus \{(r, \gamma): r \in [1, r^\sharp), 0 \leq \gamma <1\}$ and $u_n$ be the weak solution of the problem $(P_n)$ and satisfies \eqref{sense:approx}. From Lemma \ref{lem:apri:weak}, we know that $u_n^{\frac{\mathfrak{S}_r+1}{r}}$ is uniformly bounded in $H_0^{1}(\Omega).$ The condition $(r, \gamma) \in \mathcal{P}_{r,\gamma} \setminus \{(r, \gamma): r \in [1, r^\sharp), 0 \leq \gamma <1\}$ implies $\mathfrak{S}_r \geq 1$. Together with the fact that for every compact subset $\omega \Subset \Omega$ there exists $C=C(\omega)$ independent of $n$ such that $0< C(\omega) \leq u_n(x)$ for $x \in \omega$, we get $u_n$ is uniformly bounded in $H_{loc}^1(\Omega)$. Precisely,
\begin{equation*}
\begin{split}
    \int_{\omega} |\nabla u_n|^2 ~dx \leq C^{-(\mathfrak{S}_r-1)} \int_{\omega} u_n^{(\mathfrak{S}_r-1)}  |\nabla u_n|^2 ~dx \leq \frac{4 C^{-(\mathfrak{S}_r-1)}}{(\mathfrak{S}_r +1)} \int_{\omega} |\nabla u_n^{\frac{\mathfrak{S}_r+1}{2}}|^2 ~dx \leq C_1
\end{split}
\end{equation*}
where $C_1$ is independent of $n.$ Then, there exists a $u \in H_{loc}^1(\Omega)$ such that
\begin{equation}\label{convergence:est:case:2}
\begin{split}
    & u_n \rightharpoonup u \ \text{in} \ H_{loc}^1(\Omega), \ \ u_n \to u \ \text{in} \ L^{j}_{loc}(\Omega) \ \text{for} \ 1 \leq j < 2^\ast\ \text{and a.e. in} \ \mathbb{R}^N.
\end{split}
\end{equation}
Therefore, by using the weak convergence property and adopting the same arguments from \cite[Theorem 3.6]{Canino_et_al}, we are able to pass limits in the left hand side of \eqref{sense:approx}, {\it i.e.} for any $\psi \in H^1_{loc}(\Omega)$ with $\supp(\psi) \Subset \Omega$, 
\begin{equation}\label{exis:est3}
\begin{split}
  \int_{\Omega} \nabla u_n &\cdot \nabla \psi ~dx + \frac{C(N,s)}{2} \int_{\mathbb{R}^N} \int_{\mathbb{R}^N} \frac{(u_n(x)- u_n(y))(\psi(x)- \psi(y))}{|x-y|^{N+2s}} ~dx ~dy \\
& \to \int_{\Omega} \nabla u \cdot \nabla \psi ~dx + \frac{C(N,s)}{2} \int_{\mathbb{R}^N} \int_{\mathbb{R}^N} \frac{(u(x)- u(y))(\psi(x)- \psi(y))}{|x-y|^{N+2s}} ~dx ~dy \quad \text{as} \ n \to \infty.
\end{split}  
\end{equation}
Now, by repeating the same arguments as from the proof of Theorem \ref{result:1} for the limit passage in the right hand side term and by passing limits $n \to \infty$ using \eqref{exis:est3}, we obtain our claim.
\qed 

\subsection{Proof of Theorem \ref{result:3}}
Let $u$ be the weak solution of the problem \eqref{main:problem} obtained from the approximable solution $u_n$ of the problem $(P_n)$.
\begin{enumerate}
    \item[(i)] Let $r \in [1, \frac{N}{2})$ and $N > 2$. Then from Lemma \ref{lem:apri:weak}, we know that $u_n^{\frac{\mathfrak{S}_r+1}{2}}$ is uniformly bounded in $H_0^1(\Omega)$ which further implies that there exists a $v \in H_0^1(\Omega)$ such that $u_n^{\frac{\mathfrak{S}_r+1}{2}} \rightharpoonup v \ \text{in} \ H_0^1(\Omega)$ and by Sobolev's embedding, we get $u_n^{\frac{\mathfrak{S}_r+1}{2}} \to v \ \text{in} \ L^j(\Omega)$ for every $1\leq j < 2^\ast$ and a.e. in $\mathbb{R}^N.$ Together with convergence estimate in \eqref{convergence:est:case:1} and \eqref{convergence:est:case:2}, we obtain $v=u^{\frac{\mathfrak{S}_r+1}{2}}$  a.e. in $\mathbb{R}^N.$ The remaining estimates follows from using the embedding $L^r(\Omega) \hookrightarrow  L^j(\Omega)$ and repeating the same proof by replacing $\mathfrak{S}_r$ by $\mathfrak{S}_j.$
    \item[(ii)] Let $r=\frac{N}{2}$ and $N \geq 2.$ Again from Lemma \ref{lem:apri:weak}, we imply that $\mathfrak{H} \left(\frac{u_n}{2}\right)$ in case of $0 \leq \gamma \leq 1$ and $\mathfrak{D} \left(\frac{u_n}{2}\right)$ in case of $\gamma > 1$ are uniformly bounded in $H_0^1(\Omega)$ which further implies that there exist $v_1, v_2 \in H_0^1(\Omega)$ such that  
    \[
    \begin{split}
        & 0 \leq \gamma \leq 1: \ \mathfrak{H} \left(\frac{u_n}{2}\right) \rightharpoonup v_1 \ \text{in} \ H_0^1(\Omega) \ \text{and} \ \mathfrak{H} \left(\frac{u_n}{2}\right) \to v_1 \  \text{in} \ L^j(\Omega) \ \text{for every} \ 1\leq j < 2^\ast \ \text{and a.e. in} \ \mathbb{R}^N,\\
        &\gamma > 1: \ \mathfrak{D} \left(\frac{u_n}{2}\right) \rightharpoonup v_2\  \text{in} \ H_0^1(\Omega)  \ \text{and} \ \mathfrak{D} \left(\frac{u_n}{2}\right) \to v_2\  \text{in} \ L^j(\Omega) \ \text{ for every} \ 1\leq j < 2^\ast \ \text{and a.e. in} \ \mathbb{R}^N.
    \end{split}\]
Using the above estimate with the convergence properties in \eqref{convergence:est:case:1} and \eqref{convergence:est:case:2}, we identify the limit functions $v_1$ and $v_2$ as
$$v_1=\mathfrak{H} \left(\frac{u}{2}\right) \quad \text{and} \quad v_2= \mathfrak{D} \left(\frac{u}{2}\right) \quad \text{a.e. in} \ \mathbb{R}^N.$$
\item[(iii)] From  Lemma \ref{lem:apri:weak}, we know that $u_n$ is uniformly bounded in $L^\infty(\Omega)$ when $r> \frac{N}{2}$ and hence $u \in L^\infty(\Omega).$
\item[(iv)] The first part in the claim follows from the proof of Theorem \ref{result:1} and the second part of claim follows using the embedding $L^r(\Omega) \hookrightarrow L^1(\Omega)$ for $1 \leq r \leq \infty$ and by repeating the proof of Theorem \ref{result:1} and \ref{result:2} with $\mathfrak{S}_r=1$.
\end{enumerate}\qed 
\subsection{Proof of Theorem \ref{result:10}} The convergence estimate in the proof of Theorem \ref{result:1} and Theorem \ref{result:2}, and the uniform a priori estimates in Lemma \ref{lem:lower:reg} implies the required claim. The only if statement in proving optimal Sobolev regularity follows from the Hardy inequality and the boundary behavior of the weak solution. Precisely, if \[ \mathfrak{S} \leq 0  \ \text{and} \  \gamma + \frac{1}{r}< 1, 
	 \quad \text{then} \quad u^{\frac{\mathfrak{S}+1}{2}} \notin H_0^{1}(\Omega).\] Indeed, in this case, we have
$$ \|u^\frac{\mathfrak{S}+1}{2}\|_{H_0^1(\Omega)}^2 \geq C\int_{\Omega} \frac{|u^\frac{\mathfrak{S}+1}{2}(x)|^2}{|d(x)|^2} ~dx \geq C\int_{\Omega} d^{\mathfrak{S}-1}(x) ~dx =\infty$$
and hence, we deduce $u^\frac{\mathfrak{S}+1}{2} \notin H_0^{1}(\Omega).$ 
\qed 
\subsection{Proof of Theorem \ref{result:8}} Let $u$ and $v$ be two weak solutions of the problem \eqref{main:problem}-\eqref{bdry:cond} for datum $f$ and $g$ respectively, obtained in Theorem \ref{result:1} and Theorem \ref{result:2}. Let $\{u_n\}_{n \in \mathbb{N}}, \{v_n\}_{n \in \mathbb{N}} \subset H_0^{1}(\Omega) \cap l^\infty(\Omega)$ are sequence of the solutions of the non-singular approximating problem $(P_n)$ for $n \in \mathbb{N}$ where $f_n$ and $g_n$ are increasing sequences such that $f_n \to f$ and $g_n \to g$ in $L^r(\Omega).$ By taking $[(u_n-v_n)_+ + \epsilon]^{\mathfrak{S}_r} -\epsilon^{\mathfrak{S}_r}$ as a test function in \eqref{def:notion:approx}, we obtain
\[
\begin{split}
    &\mathfrak{S}_r \int_{\Omega} [\epsilon+ (u_n-v_n)_+]^{\mathfrak{S}_r-1} \nabla(u_n-v_n) \cdot \nabla(u_n-v_n)_+ ~dx + \int_{\Omega} \frac{g_n}{\left(v_n+ \frac{1}{n}\right)^\gamma} ([(u_n-v_n)_+ + \epsilon]^{\mathfrak{S}_r} -\epsilon^{\mathfrak{S}_r}) ~dx \\
    &+  \frac{C(N,s)}{2} \int_{\mathbb{R}^N} \int_{\mathbb{R}^N} \frac{((u_n-v_n)(x)- (u_n-v_n)(y))([(u_n-v_n)_+ + \epsilon]^{\mathfrak{S}_r}(x)- [(u_n-v_n)_+ + \epsilon]^{\mathfrak{S}_r}(y))}{|x-y|^{N+2s}} ~dx ~dy \\
    &= \int_{\Omega} \frac{f_n}{\left(u_n+ \frac{1}{n}\right)^\gamma} ([(u_n-v_n)_+ + \epsilon]^{\mathfrak{S}_r} -\epsilon^{\mathfrak{S}_r}) ~dx
\end{split}
\]
In order to pass limits in the integrals on the left hand side of the above equality we use Fatou's Lemma and for the integral on the right hand side, we use Lebesgue dominated convergence, since for $\epsilon < \frac{1}{n}$, and using \eqref{choice:1} and \eqref{choice:2}, we have the dominating function
\[
\begin{split}
    \frac{f_n}{\left(u_n+ \frac{1}{n}\right)^\gamma} ([(u_n-v_n)_+ + \epsilon]^{\mathfrak{S}_r} -\epsilon^{\mathfrak{S}_r}) \leq f_n  ( u_n + 1/n)^{\mathfrak{S}_r-\gamma} \leq f (u + 1)^{\mathfrak{S}_r-\gamma} \leq \frac{|f|^r}{r} + \frac{(r-1)}{r} |u+1|^{\sigma_r}.
\end{split}
\]
Therefore, we have
\begin{equation}\label{cont:est0}
    \begin{split}
I_1+ I_2&+ I_3:=   \mathfrak{S}_r \int_{\Omega}  (u_n-v_n)_+^{\mathfrak{S}_r-1} \nabla(u_n-v_n) \cdot \nabla(u_n-v_n)_+ ~dx + \int_{\Omega} \frac{g_n}{\left(v_n+ \frac{1}{n}\right)^\gamma} (u_n-v_n)_+^{\mathfrak{S}_r} ~dx \\
    &+  \frac{C(N,s)}{2} \int_{\mathbb{R}^N} \int_{\mathbb{R}^N} \frac{((u_n-v_n)(x)- (u_n-v_n)(y))((u_n-v_n)_+^{\mathfrak{S}_r}(x)- (u_n-v_n)_+^{\mathfrak{S}_r}(y))}{|x-y|^{N+2s}} ~dx ~dy \\
    & \leq \int_{\Omega} \frac{f_n}{\left(u_n+ \frac{1}{n}\right)^\gamma} (u_n-v_n)_+^{\mathfrak{S}_r} ~dx: =I_4
\end{split}
\end{equation}
Now, we separately estimate the integrals in the above inequality. 
\vspace{0.1cm}\\
\textbf{Estimate for $I_4$:} With the choice of $\mathfrak{S}_r$ in \eqref{choice:1} and \eqref{choice:2},
\begin{equation}\label{cont:est1}
    0 \leq  \frac{f_n}{\left(u_n+ \frac{1}{n}\right)^\gamma} (u_n-v_n)_+^{\mathfrak{S}_r} \leq  f u^{\mathfrak{S}_r-\gamma} \leq \frac{|f|^r}{r} + \frac{(r-1)}{r} |u|^{\sigma_r}, 
\end{equation}
and the fact that $u_n \to u$ and $v_n \to v$ a.e. in $\Omega$, Lebesgue dominated convergence theorem implies
\begin{equation}\label{cont:est4}
   I_4 \to \int_{\Omega} \frac{f}{u^\gamma} (u-v)_+^{\mathfrak{S}_r} ~dx 
\end{equation}
\textbf{Estimate for $I_2 + I_3$:} It is easy to see that
\begin{equation}\label{cont:est2}
    ((u_n-v_n)(x)-(u_n-v_n)(y))((u_n-v_n)_+^{\mathfrak{S}_r}(x)- (u_n-v_n)_+^{\mathfrak{S}_r}(y)) \geq 0 \quad \text{for} \quad (x,y) \in \mathbb{R}^N \times \mathbb{R}^N.
\end{equation}
Now by using the Fatou's lemma and \eqref{cont:est2} we obtain,
\begin{equation}\label{cont:est5}
   \int_{\Omega} \frac{g}{v^\gamma} (u-v)_+^{\mathfrak{S}_r} ~dx \leq \lim\inf_{n \to \infty} I_2 + I_3 
\end{equation}
\textbf{Estimate for $I_1$:} For this we divide the proof into two cases: $\mathfrak{S}_r \leq 1$ and $\mathfrak{S}_r > 1.$ In the first case, for every $\delta>0$, we have
\[
\begin{split}
    \frac{4 \mathfrak{S}_r}{(\mathfrak{S}_r+1)^2} \int_{\Omega} \left|\nabla \left[(u_n-v_n)_+ + \delta\right]^{\frac{\mathfrak{S}_r+1}{2}}\right|^2 ~dx=  \mathfrak{S}_r \int_{\Omega} \frac{\left|\nabla (u_n-v_n)_+\right|^2}{\left((u_n-v_n)_+ + \delta\right)^{1-\mathfrak{S}_r}}~dx \leq I_1.
\end{split}
\]
Together with the estimate in \eqref{cont:est1} and \eqref{cont:est2}, we obtain $\left[(u_n-v_n)_+ + \delta\right]^{\frac{\mathfrak{S}_r+1}{2}}$ is uniformly bounded in $L^2$ and which combined with the a.e. convergence $u_n-v_n \to u-v$ in $H_0^1(\Omega)$, we obtain
\[
\left[(u_n-v_n)_+ + \delta\right]^{\frac{\mathfrak{S}_r+1}{2}} \rightharpoonup \left[(u-v)_+ + \delta\right]^{\frac{\mathfrak{S}_r+1}{2}}  \ \text{in} \ H_0^1(\Omega)
\]
and the weak lower semicontinuity of the norm gives
\[
\begin{split}
    \mathfrak{S}_r \int_{\Omega} \frac{\left|\nabla (u-v)_+\right|^2}{\left((u-v)_+ + \delta\right)^{1-\mathfrak{S}_r}}~dx &= \frac{4 \mathfrak{S}_r}{(\mathfrak{S}_r+1)^2} \int_{\Omega} \left|\nabla \left[(u-v)_+ + \delta\right]^{\frac{\mathfrak{S}_r+1}{2}}\right|^2 ~dx\\
    &\leq \lim\inf_{n \to \infty} \frac{4 \mathfrak{S}_r}{(\mathfrak{S}_r+1)^2} \int_{\Omega} \left|\nabla \left[(u_n-v_n)_+ + \delta\right]^{\frac{\mathfrak{S}_r+1}{2}}\right|^2 ~dx \leq \lim\inf_{n \to \infty} I_1.
\end{split}
\]
Using Monotone convergence theorem by taking $\delta$ decreasing to $0$, we get
\begin{equation}\label{cont:est3}
    \begin{split}
    \mathfrak{S}_r \int_{\Omega} \frac{\left|\nabla (u-v)_+\right|^2}{(u-v)_+^{1-\mathfrak{S}_r}}~dx \leq \lim\inf_{n \to \infty} I_1.
\end{split}
\end{equation}
In the second case, $\mathfrak{S}_r >1$, taking $\delta=0$ and then reasoning as above, we get the same estimate \eqref{cont:est3}. Now, by passing $n \to \infty$ in \eqref{cont:est0} and using the above convergence estimates in \eqref{cont:est4}, \eqref{cont:est5} and \eqref{cont:est3}, we obtain
\begin{equation}\label{cont:est6}
    \begin{split}
        \frac{4 \mathfrak{S}_r}{(\mathfrak{S}_r+1)^2} \int_{\Omega} |\nabla (u-v)_+^{\frac{\mathfrak{S}_r+1}{2}}|^2 ~dx &\leq \int_{\Omega} \left(\frac{f(x)}{u^\gamma} - \frac{g(x)}{v^\gamma} \right) (u-v)_+^{\mathfrak{S}_r} ~dx \leq \int_{\Omega} (f(x)-g(x)) \frac{(u-v)_+^{\mathfrak{S}_r}}{u^\gamma} ~dx \\
        & \leq \int_{\Omega} (f(x)-g(x)) (u-v)_+^{\mathfrak{S}_r-\gamma} ~dx.
    \end{split}
\end{equation}
Now, by using H\"older inequality, Sobolev embeddings and using the relation of $\mathfrak{S}_r$ in \eqref{choice:1} and \eqref{choice:2}, we obtain
\[
\begin{split}
   \frac{4 \mathfrak{S}_r}{(\mathfrak{S}_r+1)^2} & \int_{\Omega} |\nabla (u-v)_+^{\frac{\mathfrak{S}_r+1}{2}}|^2 ~dx \leq \|f-g\|_{L^r(\Omega_1)} \left(\int_{\Omega} \left((u-v)_+^{\frac{(\mathfrak{S}_r +1)}{2}}\right)^{\frac{2N}{N-2}} ~dx\right)^\frac{1}{r'}\\
 & \leq \|f-g\|_{L^r(\Omega_1)} \left( S(N) \int_{\Omega} |\nabla (u-v)_+^{\frac{\mathfrak{S}_r +1}{2}}|^2 ~dx \right)^{\frac{N}{r'(N-2)}} \ \text{where} \ \Omega_1:= \{u \geq v\}
\end{split}
\]
which further implies
\[
\begin{split}
    \|\nabla (u-v)_+^{\frac{\mathfrak{S}_r+1}{2}}\|^2_{L^2(\Omega)} & \leq C \|f-g\|_{L^r(\Omega_1)}^{\frac{r(N-2)}{N-2r}} \quad \text{where} \quad C=  \left(\frac{(\mathfrak{S}_r+1)^2}{4 \mathfrak{S}_r} \right)^{\frac{r(N-2)}{N-2r}} \left(S(N)\right)^{\frac{N(r-1)}{N-2r}}
\end{split}
\]
and analogously
\[
\begin{split}
    \|\nabla (v-u)_+^{\frac{\mathfrak{S}_r+1}{2}}\|^2_{L^2(\Omega)} & \leq C \|f-g\|_{L^r(\Omega \setminus \Omega_1)}^{\frac{r(N-2)}{N-2r}}.
\end{split}
\]
\qed \vspace{0.1cm}\\
\textbf{Proof of Corollary \ref{result:9}} The proof follows from the inequality relation \eqref{cont:est6} by taking $f \leq g.$ \qed
\subsection{Proof of Theorem \ref{result:4}:} Let $\zeta+\gamma \leq 1$ and $u_n$ be the weak solution of the problem $(P_n)$ in the sense that
\begin{equation}\label{sense:approx:1}
\begin{split}
    \int_{\Omega} \nabla u_n \cdot \nabla \psi ~dx + \frac{C(N,s)}{2} \int_{\mathbb{R}^N} \int_{\mathbb{R}^N} &\frac{(u_n(x)- u_n(y))(\psi(x)- \psi(y))}{|x-y|^{N+2s}} ~dx ~dy \\
    &= \int_{\Omega} \frac{f_n(x)}{(u_n + \frac{1}{n})^\gamma} \psi ~dx \quad \forall \psi \in H_0^1(\Omega).
\end{split}
\end{equation}
From Lemma \ref{sobolev:reg} and Lemma \ref{bdry:est}, we know that, for any $\mathfrak{L}>0$, $u_n^{\frac{\mathfrak{L}+1}{2}}$ is uniformly bounded in $H_0^1(\Omega)$ and $u_n \geq C \delta(x)$ for $x \in \Omega$ and $C>0$ independent of $n$. Therefore, by taking $\mathfrak{L}=1$, we obtain
\begin{equation}\label{convergence:est:case:3}
\begin{split}
    & u_n \rightharpoonup u \ \text{in} \ H_0^1(\Omega), \ \ u_n \to u \ \text{in} \ L^{j}(\Omega) \ \text{for} \ 1 \leq j < 2^\ast\ \text{and a.e. in} \ \mathbb{R}^N.
\end{split}
\end{equation}
Using Hardy's inequality, for any $\psi \in H_0^1(\Omega)$, $\frac{f_n \psi}{(u_n+\frac{1}{n})^\gamma}$ is uniformly integrable. Indeed,
\begin{equation*}
    \begin{split}
        \int_{\Omega} \frac{f_n(x)}{(u_n + \frac{1}{n})^\gamma} \psi ~dx \leq \int_{\Omega} \frac{f(x)}{u_n^\gamma} \psi ~dx \lesssim \int_{\Omega} \delta^{1-\gamma-\zeta}(x) \frac{\psi}{\delta(x)} ~dx \lesssim \|\frac{\psi}{\delta}\|_{L^2(\Omega)} \leq \|\psi\|_{H_0^1(\Omega)}. 
    \end{split}
\end{equation*}
Finally, by using Vitali convergence theorem and convergence properties in \eqref{convergence:est:case:3}, we are able to pass limits in \eqref{sense:approx:1} and obtain
\[
\begin{split}
    \int_{\Omega} \nabla u\cdot \nabla \psi ~dx + \frac{C(N,s)}{2} \int_{\mathbb{R}^N} \int_{\mathbb{R}^N} &\frac{(u(x)- u(y))(\psi(x)- \psi(y))}{|x-y|^{N+2s}} ~dx ~dy = \int_{\Omega} \frac{f(x)}{u^\gamma} \psi ~dx \quad \forall \psi \in H_0^1(\Omega).
\end{split}
\]
Finally, by comparison principle (see \cite[Theorem 4.2]{Canino_et_al}) for any $n \in \mathbb{N}$, $u_n\leq v$ a.e. in $\Omega$ where $v$ is another weak solution of \eqref{main:problem}-\eqref{bdry:est}. Passing to the limit
$n \to \infty$ gives that u is a minimal solution.
\qed
\subsection{Proof of Theorem \ref{result:5}} Let $\zeta+\gamma > 1$ and $u_n$ be the weak solution of the problem $(P_n)$. From Lemma \ref{sobolev:reg} and Lemma \ref{Lem:apriori}, we have, for any $\mathfrak{L}>\mathfrak{L}^*$, $u_n^{\frac{\mathfrak{L}+1}{2}}$ is uniformly bounded in $H_0^1(\Omega)$ and $u_n \geq C(\omega)$ for $x \in \omega \Subset \Omega$ and $C>0$ independent of $n$. Now we divide the proof into two cases: \vspace{0.1cm}\\
\textbf{Case 1: } $\mathfrak{L}^* > 1$ \vspace{0.1cm}\\
In this case, $u_n$ is uniformly bounded in $H_{loc}^1(\Omega)$. Precisely,
\begin{equation}
\begin{split}
    \int_{\omega} |\nabla u_n|^2 ~dx \leq C^{-(\mathfrak{L}-1)}(\omega) \int_{\omega} u_n^{(\mathfrak{L}-1)}  |\nabla u_n|^2 ~dx \leq \frac{4 C^{-(\mathfrak{L}-1)}}{(\mathfrak{L} +1)} \int_{\omega} |\nabla u_n^{\frac{\mathfrak{L}+1}{2}}|^2 ~dx \leq C_0
\end{split}
\end{equation}
where $C_0$ is independent of $n.$ Then, there exists a $u \in H_{loc}^1(\Omega)$ such that
\begin{equation}\label{convergence:est:case:4}
\begin{split}
    & u_n \rightharpoonup u \ \text{in} \ H_{loc}^1(\Omega), \ \ u_n \to u \ \text{in} \ L^{j}_{loc}(\Omega) \ \text{for} \ 1 \leq j < 2^\ast\ \text{and a.e. in} \ \mathbb{R}^N.
\end{split}
\end{equation}
Using Hardy's inequality we have for any $\psi \in H_0^1(\Omega)$ with $\supp(\psi) \Subset \Omega$, $\frac{f_n \psi}{(u_n+\frac{1}{n})^\gamma}$ is uniformly integrable in $L^1(\omega)$. Then, by using Vitali convergence theorem, convergence properties in \eqref{convergence:est:case:4} and adopting the same arguments from \cite[Theorem 3.6]{Canino_et_al}, we are able to pass limits in \eqref{sense:approx:1}, {\it i.e.} for any $\psi \in \bigcup_{\omega \Subset \Omega} H_0^1(\omega)$, 
\[
\begin{split}
    \int_{\Omega} \nabla u\cdot \nabla \psi ~dx + \frac{C(N,s)}{2} \int_{\mathbb{R}^N} \int_{\mathbb{R}^N} &\frac{(u(x)- u(y))(\psi(x)- \psi(y))}{|x-y|^{N+2s}} ~dx ~dy = \int_{\Omega} \frac{f(x)}{u^\gamma} \psi ~dx.
\end{split}
\]
\textbf{Case 2: } $\mathfrak{L}^* \leq 1$ \vspace{0.1cm}\\
In this case, $u_n$ is uniformly bounded in $H_0^1(\Omega)$, precisely, by taking $\mathfrak{L}=1$. Now, by repeating the proof of Theorem \ref{result:4}, we obtain our claim.
\subsection{Proof of Theorem \ref{result:6}} From Theorem \ref{result:4}, Theorem \ref{result:5}, Lemma \ref{bdry:est} and using the fact that $u:=\lim_{n \to \infty} u_n$ is a weak solution of the main problem \eqref{main:problem}, we obtain that $u$ satisfies
\[
	C_1 \delta(x) \leq u(x) \leq C_2 \delta(x) \left\{
	\begin{array}{ll}
	1  & \text{ if } \  \zeta + \gamma < 1, \vspace{0.1cm}\\
	\ln\left(\frac{\mathcal{D}_\Omega}{\delta(x)}\right) & \text{ if } \ \zeta + \gamma = 1,
	\end{array} 
	\right.
\]
and 
\[
	C^1\delta^{\frac{2-\zeta}{\gamma+1}}(x)\leq u(x) \leq C^2 \delta^{\frac{2-\zeta}{\gamma+1}}(x).
\]
Now, it only remains to prove the optimal boundary behavior in case of $\zeta+ \gamma=1.$ Let $w_\Xi$ be the solution to the problem
\begin{equation}\label{log:problem}
\left\{
\begin{aligned}
-\Delta w_\Xi + (-\Delta)^s w_\Xi& = \frac{1}{\delta}\ln^{-\Xi}\left(\frac{\mathcal{D}_{\Omega}}{\delta}\right) , \quad w_\Xi>0 \quad &&\text{ in }  \Omega, \\
w_\Xi & = 0 &&  \text{ in } \ \mathbb{R}^N \setminus \Omega.
\end{aligned}
\right.
\end{equation}
where $\Xi \in [0,1).$ Then, by using the integral representation of the solution, Lemma \ref{lem:greest1} and Lemma \ref{lem:greest2}, we obtain 
	\begin{equation}\label{log:rep}
	\begin{aligned}
	w_\Xi(x):=  \left\{
	\begin{array}{ll}
	\mathbb{G}^\Omega\left[\frac{1}{\delta}\ln^{-\Xi}(\frac{\mathcal{D}_{\Omega}}{\delta}) \right](x) &  \text{ if } x \in \Omega, \\
	0
	& \text{ if } x \in \mathbb{R}^N \setminus \Omega.\\
	\end{array} 
	\right.
	\end{aligned}
	\end{equation}
and there exists $c_1, c_2>0$ such that
\begin{equation}\label{bodry:est}
   c_1 \delta(x) \ell^{1-\Xi}(\delta(x)) \leq w_\Xi(x) \leq c_2 \delta(x) \ell^{1-\Xi}(\delta(x)) \quad \forall \ x \in \Omega. 
\end{equation}
Moreover, for any $\Xi_0 <1$, $c_1$ and $c_2$ are uniform for any $0 \leq  \Xi \leq \Xi_0.$ Now we divide the proof into two cases: \vspace{0.1cm}\\
\textbf{Case 1:} $\zeta>0$\vspace{0.1cm}\\
Since $u$ satisfies \eqref{est:upper-lower:appr:solu:weak}, $\gamma+ \zeta =1$ and $f \in \mathcal{A}_\zeta$, there exists $d_0, d_1>0$ such that $d_0 \delta^{-\zeta}(x) \leq f(x) \leq d_1 \delta^{-\zeta}(x)$ for $x \in \Omega$ and $u$ satisfies
\[ (-\Delta) (d_0 w_\gamma) + (-\Delta)^s (d_0 w_\gamma) = d_0 \delta^{-1}(x) \ln^{-\gamma}\left(\frac{\mathcal{D}_\Omega}{\delta(x)}\right)  \leq \frac{f(x)}{u^\gamma} = (-\Delta) u + (-\Delta)^s u.\]
Now, by using the comparison principle and \eqref{bodry:est} with $\Xi_1:=\gamma$, we obtain
\begin{equation*}
c_1 d_0 \delta(x) \ln^{1-\Xi_1}\left(\frac{\mathcal{D}_{\Omega}}{\delta(x)}\right) \leq d_0 w_{\Xi_1}(x) \leq u(x) \ \text{for} \ x \in \Omega   
\end{equation*}
and 
\[(-\Delta) u + (-\Delta)^s u = \frac{f(x)}{u^\gamma} \leq  \frac{d_1}{(c_1 d_0)^\gamma} \delta^{-1}(x) \ln^{-\Xi_2}\left(\frac{\mathcal{D}_\Omega}{\delta(x)}\right) = (-\Delta) \left(\frac{d_1}{(c_1 d_0)^\gamma} w_{\Xi_2}\right) + (-\Delta)^s \left(\frac{d_1}{(c_1 d_0)^\gamma} w_{\Xi_2}\right)\]
where $\Xi_2:= \gamma(1-\Xi_1).$ Again, using the comparison principle, we obtain
\begin{equation*}
u(x) \leq \frac{d_1}{(c_1 d_0)^\gamma} w_{\Xi_2}(x) \leq \frac{d_1 c_2}{(c_1 d_0)^\gamma} \delta(x) \ln^{1-\Xi_2}\left(\frac{\mathcal{D}_{\Omega}}{\delta(x)}\right) \ \text{for} \ x \in \Omega   
\end{equation*}
Iterating these estimates, we obtain for any $j \in \mathbb{N}$
\[
    \begin{split}
    \frac{(c_1 d_0)^{1+\gamma^2 + \dots + \gamma^{2j}}}{(d_1c_2)^{\gamma + \gamma^3 + \dots + \gamma^{2j-1}}} \delta(x) &\ln^{1-\gamma + \gamma^2-\gamma^3 \dots -\gamma^{2j+1} }\left(\frac{\mathcal{D}_\Omega}{\delta(x)}\right) \leq u(x) \\
    &\leq  \frac{(c_1 d_0)^{1+\gamma^2 + \dots + \gamma^{2j}}}{(d_1c_2)^{\gamma + \gamma^3 + \dots + \gamma^{2j+1}}} \delta(x) \ln^{1-\gamma + \gamma^2 \dots + \gamma^{2j+2} }\left(\frac{\mathcal{D}_\Omega}{\delta(x)}\right) 
    \end{split}
\]
Passing to the limit $j \to \infty$, we obtain
\[ d_2 \delta(x) \ln^{\frac{1}{1+\gamma}}\left(\frac{\mathcal{D}_\Omega}{\delta(x)}\right) \leq u(x) \leq d_3 \delta(x) \ln^{\frac{1}{1+\gamma}}\left(\frac{\mathcal{D}_\Omega}{\delta(x)}\right)
\]
where $d_2, d_3>0$ are constants depending upon $\gamma, d_0, d_1, c_1$ and $c_2.$ 
\vspace{0.1cm}\\
\textbf{Case 2:} $\zeta=0$\vspace{0.1cm}\\
In this case, by taking $\Xi=\frac{1}{2}$ in \eqref{log:problem} and \eqref{log:rep}, there exists $c_3, c_4>0$ such that
\begin{equation}\label{bodry:est:1}
   c_3 \delta(x) \ln^{\frac{1}{2}}\left(\frac{\mathcal{D}_{\Omega}}{\delta(x)}\right) \leq w_\frac{1}{2}(x) \leq c_4 \delta(x) \ln^{\frac{1}{2}}\left(\frac{\mathcal{D}_{\Omega}}{\delta(x)}\right) \quad \forall \ x \in \Omega.
\end{equation}
and for a positive constant $C$ large enough, we have
\[
(-\Delta) (C w_\frac{1}{2}) + (-\Delta)^s (C w_\frac{1}{2}) \geq \frac{1}{C w_\frac{1}{2}} \ \text{and} \ (-\Delta) \left(\frac{w_\frac{1}{2}}{C}\right) + (-\Delta)^s \left(\frac{w_\frac{1}{2}}{C}\right) \leq \frac{C}{ w_\frac{1}{2}}.
\]
Therefore again by comparison principle, we obtain \[ \frac{c_3}{C} \delta(x) \ln^{\frac{1}{2}}\left(\frac{\mathcal{D}_{\Omega}}{\delta(x)}\right) \leq u(x) \leq \frac{c_4}{C} \delta(x) \ln^{\frac{1}{2}}\left(\frac{\mathcal{D}_{\Omega}}{\delta(x)}\right) \quad \forall \ x \in \Omega.\]
The only if statement in proving optimal Sobolev regularity follows from the Hardy inequality and the boundary behavior of the weak solution. Precisely, if \[ \mathfrak{L} \leq \left\{
	\begin{array}{ll}
	0  & \text{ if } \  \zeta+ \gamma \leq  1, \vspace{0.1cm}\\
	\mathfrak{L}^* & \text{ if } \ \zeta+ \gamma > 1.
	\end{array} 
	\right. \quad \text{then} \quad u^{\frac{\mathfrak{L}+1}{2}} \notin H_0^{1}(\Omega).\] Indeed, in case of $\zeta+ \gamma \leq  1$ and $\mathfrak{L} \leq 0$, we have
$$ \|u^\frac{\mathfrak{L}+1}{2}\|_{H_0^1(\Omega)}^2 \geq C\int_{\Omega} \frac{|u^\frac{\mathfrak{L}+1}{2}(x)|^2}{|d(x)|^2} ~dx \geq C\int_{\Omega} d^{\mathfrak{L}-1}(x) ~dx =\infty.$$
In the same way, if $\zeta+ \gamma >1$ and $\mathfrak{L} \leq \mathfrak{L}^*$, then 
$$ \|u^\frac{\mathfrak{L}+1}{2}\|_{H_0^1(\Omega)}^2  \geq C\int_{\Omega} \frac{|u^\frac{\mathfrak{L}+1}{2}(x)|^2}{|d(x)|^2} ~dx \geq C\int_{\Omega} d^{\frac{(\mathfrak{L}+1)(2-\zeta)}{(1+\gamma)}-2}(x) ~dx =\infty$$
and we deduce $u^\frac{\mathfrak{L}+1}{2} \notin H_0^{1}(\Omega).$ \qed 
\subsection{Proof of Theorem \ref{result:7}} Let $\zeta \geq 2$ and $f \in \mathcal{A}_\zeta$. We choose $\Gamma \in (0,1)$ and  $\zeta_0 <2$ such that $g(x) \leq f(x)$ for $g \in \mathcal{A}_{\zeta_0}$ and the constant $\Gamma$ is independent of $\zeta_0$ for $\zeta_0 \geq \zeta_0^*$ with $\zeta_0^*>0$. To prove our claim, we proceed by contradiction. Assume there exist a weak solution $w \in H_{loc}^{1}(\Omega)$ of the problem \eqref{main:problem} and $\mathfrak{L}_0 \geq 1$ such that $w^{\mathfrak{L}_0} \in H_0^{1}(\Omega)$.\\
For $n \in \mathbb{N}$, let $v_n \in H_0^{1}(\Omega) \cap C^{0,\ell}(\overline{\Omega})$ be the unique weak solution of
\begin{equation}\label{bv}
\int_{\Omega} \nabla v_n \cdot \nabla  \phi ~dx + \frac{C(N,s)}{2} \iint_{\mathbb{R}^{2N}} \frac{(v_n(x)- v_n(y)) (\phi(x)- \phi(y))}{|x-y|^{N+2s}} ~dx ~dy = \int_{\Omega} \frac{\Gamma g_n(x)}{(v_n+ \frac{1}{n})^\gamma} \phi ~dx 
\end{equation}
for any $\phi \in H_0^{1}(\Omega)$. By the continuity of $v_n$, for given $\theta >0$, there exists a $\eta {= \eta(n,\theta)} >0$ such that $v_n \leq  \frac{\theta}{2}$ in $\Omega_\eta$. Since $w \geq 0$, then $ u:= v_n- w- \theta \leq - \frac{\theta}{2} <0$ in $\Omega_\eta$ and 
\begin{equation*}\label{supp}
\mbox{supp}({u}^+) \subset \mbox{supp}((v_n- \theta)^+) \subset \Omega \setminus \Omega_\eta.
\end{equation*}
We have $u^+\in H_0^{1}(\tilde \Omega)\subset H_0^{1}(\Omega)$ for some $\tilde \Omega$ such that $\Omega\setminus \Omega_\eta \subset \tilde \Omega \Subset \Omega$. Hence, choosing $u^+$ as a test function in \eqref{bv}, we get
\begin{equation}\label{bv4}
\begin{split}
\int_{\Omega} \nabla v_n \cdot \nabla u^+ ~dx &+ \frac{C(N,s)}{2} \iint_{\mathbb{R}^{2N}} \frac{(v_n(x)- v_n(y)) (u^+(x)- u^+(y))}{|x-y|^{N+2s}} ~dx ~dy \\
&= \int_{\Omega} \frac{\Gamma g_n(x)}{(v_n+ \epsilon)^\gamma} u^+ ~dx \leq \int_{\Omega} \frac{\Gamma g_n(x)}{v_n^\gamma} u^+ ~dx. 
\end{split}
\end{equation} 
Moreover, $w$ is a weak solution of $(P)$ and taking $u^+\in H_0^{1}(\tilde \Omega)$ as test function, we have
\begin{equation}\label{bv2}
\begin{split}
\int_{\Omega} \nabla w \cdot \nabla u^+ ~dx &+ \frac{C(N,s)}{2} \iint_{\mathbb{R}^{2N}} \frac{(w(x)- w(y)) (u^+(x)- u^+(y))}{|x-y|^{N+2s}} ~dx ~dy \\
&= \int_{\Omega} \frac{f(x)}{w^\gamma} w^+ ~dx \geq \int_{\Omega} \frac{ \Gamma g_n(x)}{w^\gamma} w^+ ~dx. 
\end{split}
\end{equation}
By subtracting \eqref{bv2} and \eqref{bv4}, we get
\[
\begin{split}
0 \leq \int_{\Omega} (\nabla v_n - \nabla w) \cdot \nabla u^+ ~dx &+ \frac{C(N,s)}{2} \iint_{\mathbb{R}^{2N}} \frac{((v_n(x)- v_n(y))-(w(x)- w(y))) (u^+(x)- u^+(y))}{|x-y|^{N+2s}} ~dx ~dy\\
& \leq \int_{\Omega} \left(\frac{\Gamma g_n(x)}{v_n^\gamma} - \frac{\Gamma g_n(x)}{w^\gamma} \right) u^+ ~dx  \leq 0.
\end{split}
\]
which further implies $w^+= (v_n- w -\theta)^+=0$ a.e. in $\Omega$. Since  $\theta$ is arbitrary, we deduce $v_n \leq w$ in $\Omega.$  Using Lemma \ref{bdry:est}, we obtain
$$c_1 (\delta(x) + n^{\frac{-(1+\gamma)}{(2-\zeta_0)}})^{\frac{2-\zeta_0}{\gamma+1}} - c_2 n^{-1} \leq v_n \leq w \ \text{in} \ \Omega.$$ 
Now, by using the Hardy inequality and $w^{\mathfrak{L}_0} \in H_0^{1}(\Omega)$, we obtain
$$
\int_{\Omega} \left(c_1 (\delta(x) + n^{\frac{-(1+\gamma)}{(2-\zeta_0)}})^{\frac{2-\zeta_0}{\gamma+1}} - c_2 n^{-1}\right)^{2\mathfrak{L}_0} d^{-2}(x) ~dx \leq \int_{\Omega} \left|\frac{w^{\mathfrak{L}_0}}{d(x)}\right|^2 ~dx < \infty.
$$
Now, by choosing $\zeta_0$ close enough to $2$ and by taking $n \to \infty$, we obtain that the left hand side is not finite, which is a contradiction and hence claim.  \qed 
\section{Appendix}\label{sec:append}
In this part, we recall some preliminary results comprised of upper and lower estimates of Green kernel, lower Hopf type estimate, and action of the Green kernel on the distance power functions.  
\begin{pro}[Theorem 1, \cite{Chenkim_Green1}]
Let $\Omega$ be a $C^{1,1}$ open set in $\mathbb{R}^N$ and $\mathcal{G}^\Omega$ denote the Green kernel for the mixed operator $(-\Delta) + (\Delta)^s$ defined on $\dom(\mathcal{G}^{\Omega}) := \Omega \times \Omega \setminus \{(x,x): x \in \Omega\}$. Then, for any $(x,y) \in \dom(\mathcal{G}^{\Omega}):$
	\begin{equation*}
	\mathcal{G}^{\Omega}(x,y) \asymp \frac{1}{|x-y|^{N-2}} \left( \frac{\delta(y) \delta(x)}{|x-y|^2} \wedge 1 \right) \ \text{ if } \ N \geq 3.
	\end{equation*}
\end{pro}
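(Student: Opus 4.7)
The plan is to establish the two-sided estimate via the probabilistic representation of $\mathcal{G}^\Omega$ as the occupation density of the killed Markov process generated by $\mathcal{L}=-\Delta+(-\Delta)^s$. Observe that $\mathcal{L}$ is the infinitesimal generator of $X_t=\sqrt{2}\,B_t+Y_t$, where $B_t$ is a standard Brownian motion in $\mathbb{R}^N$ and $Y_t$ is an independent symmetric $2s$-stable L\'evy process; if $p^\Omega(t,x,y)$ denotes the transition density of $X$ killed upon exiting $\Omega$, then
\[
\mathcal{G}^\Omega(x,y)=\int_0^\infty p^\Omega(t,x,y)\,dt.
\]
The first task is therefore to obtain sharp two-sided estimates for $p^\Omega$ on the $C^{1,1}$ domain $\Omega$.

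The unkilled transition density $p(t,x,y)$ of $X$ satisfies two-sided bounds of the form
\[
p(t,x,y)\asymp \bigl(t^{-N/2}e^{-c|x-y|^2/t}\bigr)\vee \bigl(t\,(t^{1/(2s)}+|x-y|)^{-N-2s}\bigr),
\]
as is known for sums of independent L\'evy processes. For the killed density on a $C^{1,1}$ set one multiplies by the boundary survival factor $\bigl(1\wedge \delta(x)/\sqrt{t}\bigr)\bigl(1\wedge \delta(y)/\sqrt{t}\bigr)$, which is proved by coupling the exit time of $X$ from $\Omega$ with the exit time of the dominant Brownian component, using Meyer's decomposition into small and large jumps to treat the $2s$-stable perturbation, and using the $C^{1,1}$ regularity of $\partial\Omega$ to reduce to a half-space comparison. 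The Gaussian part dominates whenever $|x-y|^2\lesssim t$, which is precisely the range that governs the time integral when $N\geq 3$.

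With the heat kernel estimate in hand, the Green function is recovered by splitting the time integral at $t=|x-y|^2$. On $(0,|x-y|^2]$ the Gaussian factor produces, after integration, the Newtonian kernel $|x-y|^{-(N-2)}$, while the survival factor is essentially $1$ whenever $t\leq \delta(x)\delta(y)$; on $(|x-y|^2,\infty)$ the survival factor becomes active and the change of variables $t=\delta(x)\delta(y)/\tau$ together with a direct computation yields the multiplicative cutoff $\delta(x)\delta(y)/|x-y|^2\wedge 1$. The polynomial tail contribution scales like $|x-y|^{-(N-2s)}$ with a weaker boundary factor and, for $N\geq 3$, is absorbed by the Newtonian term in both the upper and the lower estimate.

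The main obstacle is the sharp two-sided Dirichlet heat kernel estimate on $C^{1,1}$ domains for the mixed process. Standard reflection arguments for Brownian motion do not apply directly because of the $2s$-stable jumps, and one cannot simply compare $\mathcal{G}^\Omega$ with the pure Laplacian Green function via the maximum principle, since $(-\Delta)^s G^\Omega_{-\Delta}(\cdot,y)$ does not have a definite sign in $\Omega$. The resolution, as carried out in \cite{Chenkim_Green1}, is to prove the survival factor by a probabilistic argument combining the interior Gaussian estimate, jump-truncation, and boundary Harnack type inequalities for $\mathcal L$; once this estimate is established, the derivation of the stated Green function bound reduces to a routine computation of the time integral.
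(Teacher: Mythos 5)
This proposition is not proved in the paper at all: it is imported verbatim as Theorem~1 of \cite{Chenkim_Green1}, so there is no internal argument to compare yours against. Judged on its own terms, your sketch describes a legitimate route (Green function as the time integral of the killed transition density, sharp two-sided Dirichlet heat kernel bounds with the survival factors $\bigl(1\wedge\delta(x)/\sqrt{t}\bigr)\bigl(1\wedge\delta(y)/\sqrt{t}\bigr)$, then a splitting of $\int_0^\infty p^\Omega(t,x,y)\,dt$ at $t=|x-y|^2$), and the final time integration you outline does reproduce the kernel $|x-y|^{2-N}\bigl(\tfrac{\delta(x)\delta(y)}{|x-y|^2}\wedge 1\bigr)$ for $N\geq 3$. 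It is worth noting, though, that this is not how the cited reference proceeds: Chen, Kim, Song and Vondra\v{c}ek obtain the Green function estimates directly, via a uniform boundary Harnack principle for $\Delta+\Delta^{\alpha/2}$, interior comparability with the Newtonian kernel, and a Bogdan-type box decomposition near $\partial\Omega$; the sharp Dirichlet heat kernel estimates for this operator are the subject of a separate paper and are, if anything, harder than the Green function bounds themselves.

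That observation is also where the genuine gap lies. Your argument reduces the proposition to the two-sided Dirichlet heat kernel estimate for the independent sum of a Brownian motion and a $2s$-stable process on a bounded $C^{1,1}$ open set, and then asserts that estimate by describing, in one sentence, a coupling with the Brownian exit time, a Meyer decomposition of the jumps, and a half-space reduction. That single step is the entire mathematical content of the result; nothing in your sketch actually establishes the survival factor (in particular, why the boundary decay rate is governed by the local part and is $\delta(x)$ rather than $\delta(x)^{s}$, which is exactly what distinguishes the mixed operator from the pure fractional Laplacian, is never argued). As a pointer to the literature the proposal is accurate; as a proof it begs the question, since the lemma you defer to \cite{Chenkim_Green1} is logically equivalent in difficulty to the statement being proved. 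The surrounding computations (the comparability $(1\wedge a)(1\wedge b)\asymp 1\wedge ab$ in the relevant range, the absorption of the $|x-y|^{2s-N}$ tail into the Newtonian term for $N\geq 3$) are fine but routine.
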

\begin{Lem}[Theorem 2.6, \cite{AbaGomVaz_2019}]\label{Hopf}
There exists $C>0$ such that for any $f \geq 0$, 
	$$\mathbb{G}^{\Omega}\left[f\right](x):= \int_{\Omega} \mathcal{G}^{\Omega}(x,y) f(y) ~dy \geq C \delta(x)^{\gamma} \int_\Omega \delta^\gamma(y) f(y) ~dy, \ x \in \Omega.$$
\end{Lem}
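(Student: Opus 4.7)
The statement is an immediate corollary of the two-sided Green's function bound stated just above it, and my plan is to derive it as such. For the mixed operator $(-\Delta)+(-\Delta)^s$ on a $C^{1,1}$ domain the local Laplacian dominates the boundary behavior, which is reflected in the relation $\varphi_1 \asymp \delta$ in \eqref{est:eigen} and in the form of the preceding kernel estimate; the effective Hopf exponent is therefore $\gamma=1$. The plan is to turn the sharp two-sided estimate into a clean (but crude) pointwise lower bound of the form $\mathcal{G}^\Omega(x,y) \gtrsim \delta(x)\delta(y)$, valid for \emph{all} pairs $(x,y)\in\Omega\times\Omega$, and then integrate against $f\ge 0$.

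First, starting from $\mathcal{G}^\Omega(x,y)\asymp |x-y|^{-(N-2)}\min\bigl(1,\delta(x)\delta(y)|x-y|^{-2}\bigr)$ and the elementary inequality $\min(1,a)\ge a/(1+a)$ for $a\ge 0$, I obtain
$$
\mathcal{G}^\Omega(x,y) \;\gtrsim\; \frac{\delta(x)\,\delta(y)}{|x-y|^{N-2}\bigl(|x-y|^2+\delta(x)\delta(y)\bigr)}.
$$
Since $\Omega$ is bounded with $\mathcal{D}_\Omega=\operatorname{diam}(\Omega)<\infty$, one has $|x-y|\le \mathcal{D}_\Omega$ and $\delta(x),\delta(y)\le \mathcal{D}_\Omega$, so the denominator is at most $2\mathcal{D}_\Omega^{N}$. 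This yields a constant $C=C(N,\mathcal{D}_\Omega)>0$ with
$$
\mathcal{G}^\Omega(x,y) \;\ge\; C\,\delta(x)\,\delta(y) \qquad \text{for all } (x,y)\in \Omega\times\Omega,\ x\neq y.
$$

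Finally, plugging this into the definition of $\mathbb{G}^\Omega[f]$ and using Tonelli (since $f\ge 0$ and the kernel is nonnegative) gives
$$
\mathbb{G}^\Omega[f](x) \;=\; \int_\Omega \mathcal{G}^\Omega(x,y)\,f(y)\,dy \;\ge\; C\,\delta(x)\int_\Omega \delta(y)\,f(y)\,dy,
$$
which is exactly the claim (with $\gamma=1$).

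The only genuine difficulty in this result is not in the step I carry out, but in the \emph{input}: establishing the sharp two-sided Green's function estimate for $(-\Delta)+(-\Delta)^s$ on a $C^{1,1}$ domain, proved by Chen--Kim via probabilistic (Hunt representation / heat kernel) techniques. Given that proposition, the Hopf estimate is essentially a diameter-bounded bookkeeping exercise and no finer analysis of the near-diagonal / far-diagonal partition of $\Omega\times\Omega$ (as used, e.g., in the proofs of Lemmas \ref{lem:greest1}--\ref{lem:greenest}) is needed here.
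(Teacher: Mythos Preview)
Your argument is correct. The paper does not actually give a proof of this lemma; it is merely quoted from \cite[Theorem~2.6]{AbaGomVaz_2019} in the appendix, so there is no ``paper's own proof'' to compare against beyond the citation. Your derivation---bounding $\min(1,a)\ge a/(1+a)$, absorbing all factors into $\mathcal{D}_\Omega^N$, and reading off $\mathcal{G}^\Omega(x,y)\ge C\,\delta(x)\delta(y)$---is precisely the standard route by which such Hopf-type lower bounds are extracted from sharp two-sided Green kernel estimates, and it matches in spirit the argument in the cited source. You are also right that the relevant boundary exponent here is $1$ (consistent with $\varphi_1\asymp\delta$); the symbol $\gamma$ in the stated lemma is inherited from the general framework of \cite{AbaGomVaz_2019} and should not be confused with the singular exponent $\gamma$ used elsewhere in the paper.
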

\begin{Lem}[Theorem 3.4, \cite{AbaGomVaz_2019}]\label{est:green}
	Assume $\beta < 2$. Then $\delta^{-\beta} \in L^1(\Omega, \delta)$ and $ \mathbb{G}^\Omega[\delta^{-\beta}] \asymp \delta^\vartheta$ in $\Omega$
	where 
	\begin{equation}\label{bdry:beha}
	\begin{aligned}
	\vartheta=  \left\{
	\begin{array}{ll}
	1 &  \text{ if } \beta<1, \\
	1 \, (\text{and logarithmic weight}) & \text{ if } \beta=1,\\
	2-\beta & \text{ if } \beta>1 .\\
	\end{array} 
	\right.
	\end{aligned}
	\end{equation}
By logarithmic weight we mean $  \mathbb{G}^\Omega[\delta^{-\beta}] \asymp \delta \ln(\frac{\mathcal{D}_\Omega}{\delta}).$
\end{Lem}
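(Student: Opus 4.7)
The integrability claim $\delta^{-\beta} \in L^1(\Omega, \delta\,dx)$ reduces to $\int_\Omega \delta(y)^{1-\beta}\,dy < \infty$, which is immediate on a $C^{1,1}$ domain for $1-\beta > -1$, i.e.\ $\beta < 2$. For the pointwise two-sided asymptotic, my plan is to insert the Chen--Kim kernel estimate
$$\mathcal{G}^\Omega(x,y) \asymp \frac{1}{|x-y|^{N-2}}\left(\frac{\delta(x)\delta(y)}{|x-y|^2} \wedge 1\right)$$
into the representation $\mathbb{G}^\Omega[\delta^{-\beta}](x) = \int_\Omega \mathcal{G}^\Omega(x,y)\,\delta(y)^{-\beta}\,dy$ and then reuse exactly the geometric decomposition developed in Lemma~\ref{lem:greest1}--Lemma~\ref{lem:greenest}: fix small $\eta>0$, split $\Omega$ into $\Omega_\eta$ and $\Omega\setminus\Omega_\eta$, and for $x\in\Omega_{\eta/2}$ partition $\Omega_\eta$ into the five subregions $\Omega_1,\ldots,\Omega_5$ according to the relative sizes of $|x-y|$, $\delta(x)$, and $\delta(y)$, with a boundary-flattening diffeomorphism $\phi_x$ reducing each piece to a model integral on a half-ball.

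For the upper bound I would dominate $\delta(y)^{-\beta}$ on each $\Omega_i$ by an appropriate power of either $\delta(x)$ or the straightened coordinate $w_N$ and compute. The identification of the dominant subregion depends on $\beta$: if $\beta<1$ the leading order comes from the tail piece $\Omega\setminus\Omega_\eta$ (using $\mathbb{G}^\Omega[\chi_\Omega]\lesssim\delta$) together with $\Omega_2,\Omega_4$, all of order $\delta(x)$; if $\beta>1$ the near-diagonal contribution from $\Omega_1$ dominates, giving
$$\int_{B(x,\delta(x)/2)} \frac{dy}{|x-y|^{N-2}\,\delta(y)^{\beta}} \asymp \delta(x)^{-\beta}\cdot \delta(x)^{2} = \delta(x)^{2-\beta};$$
and for $\beta=1$, the change of variables inside $\Omega_4$ produces, as in Lemma~\ref{lem:greest2}, the logarithmic factor
$$\delta(x)\int_{3/2}^{\eta/\delta(x)}\frac{dh}{h-1}\asymp \delta(x)\ln\!\Bigl(\tfrac{\mathcal{D}_\Omega}{\delta(x)}\Bigr).$$
In each case the estimates on the remaining four subregions have to be checked to be of strictly smaller or comparable order.

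For the matching lower bound, in the range $\beta\le 1$ the Hopf-type Lemma~\ref{Hopf} (with boundary exponent $1$ for the mixed operator) already gives $\mathbb{G}^\Omega[\delta^{-\beta}](x)\gtrsim \delta(x)\int_\Omega \delta(y)^{1-\beta}\,dy\gtrsim \delta(x)$, and the corresponding logarithmic factor at $\beta=1$ is recovered by restricting to $\Omega_4$ and repeating the lower calculation of Lemma~\ref{lem:greest1}. For $\beta>1$ I would, as in Lemma~\ref{lem:greenest}, restrict to the ball $\mathcal{B}_\delta^x=B(x,\delta(x)/2)\subset\Omega$, on which $\delta(y)\asymp\delta(x)$ and both alternatives in the kernel bound are bounded below by constants, obtaining
$$\mathbb{G}^\Omega[\delta^{-\beta}](x) \gtrsim \delta(x)^{-\beta}\int_{\mathcal{B}_\delta^x}\frac{dy}{|x-y|^{N-2}} \asymp \delta(x)^{2-\beta}.$$
The behavior for $x\in\Omega\setminus\Omega_{\eta/2}$ follows in all three cases from the classical two-sided bound $\mathbb{G}^\Omega[\chi_\Omega]\asymp\delta$.

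The main technical obstacle I anticipate is the borderline case $\beta=1$, where the contributions from $\Omega_1,\Omega_2,\Omega_3,\Omega_5$, and the far-field piece are all of exact order $\delta(x)$: constants must be tracked carefully to ensure that only the single subregion $\Omega_4$ generates the logarithmic enhancement, and that no other region accidentally produces a comparable $\ln(\mathcal{D}_\Omega/\delta)$ factor that would spoil the two-sided asymptotic. Once this delicate borderline is settled, the ranges $\beta<1$ and $\beta>1$ follow by the same scheme with only bookkeeping changes, and the logarithmic-weight clarification at the end of the statement follows directly from the $\Omega_4$ computation.
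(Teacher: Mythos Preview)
The paper does not give its own proof of this lemma: it is stated in the Appendix (Section~\ref{sec:append}) as a quoted result, attributed to \cite[Theorem~3.4]{AbaGomVaz_2019}, alongside the Green-kernel bounds and the Hopf-type estimate, all of which are simply recalled without argument. There is therefore no proof in the paper to compare your proposal against.

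That said, your strategy is sound and is precisely the one underlying the cited reference as well as the paper's own Lemmas~\ref{lem:greest1}, \ref{lem:greest2} and \ref{lem:greenest}: those lemmas carry out exactly the five-region decomposition and the $\phi_x$-flattening you describe, in the logarithmically perturbed case $\delta^{-1}\ell^{-\Xi}(\delta)$ and in the shifted case $(\delta+\epsilon^{1/\kappa})^{-\beta}$. Your identification of the dominant subregions ($\Omega_4$ for the logarithm at $\beta=1$, $\Omega_1$ for $\beta>1$, the Hopf bound for $\beta<1$) is correct, and the ``technical obstacle'' you flag at $\beta=1$ is already handled in the proof of Lemma~\ref{lem:greest2}, where each of $\Omega_1,\Omega_2,\Omega_3,\Omega_5$ is shown to contribute $O(\delta\,\ell^{1-\Xi})$ with $\Xi=0$ giving $O(\delta\,\ell)$---so no spurious logarithms arise. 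In short: your proposal would work, but for the purposes of this paper the result is simply imported from \cite{AbaGomVaz_2019}.
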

\noindent We also recall some technical algebraic inequality from \cite[Lemma 2.22]{Abdel} and \cite{Stampacchia}.
\begin{Lem}\label{prelim:inequa}
\begin{itemize}
    \item[(i)] For any $x, y \geq 0$ and $\theta >0$, 
    $$\frac{4 \theta}{(\theta +1)^2} \left( x^{\frac{\theta +1}{2}} - y^{\frac{\theta +1}{2}} \right) \leq (x-y) \left(x^\theta - y^\theta \right).$$
    \item[(ii)] Let $0< \theta \leq 1$, then for every $x, y \geq 0$
    $$|x^\theta - y^\theta| \leq |x-y|^\theta.$$
\end{itemize}
\end{Lem}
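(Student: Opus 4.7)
For part (i), the plan is to reduce the inequality to a single-variable integral identity and then apply Cauchy–Schwarz. Assume without loss of generality that $x \geq y \geq 0$; the cases $x=y$ and $y=0$ are immediate. The key observation is that for $t>0$,
\[
\frac{d}{dt}\bigl(t^{(\theta+1)/2}\bigr) = \tfrac{\theta+1}{2}\, t^{(\theta-1)/2},\qquad \frac{d}{dt}\bigl(t^{\theta}\bigr)=\theta\, t^{\theta-1},
\]
so writing $x^{(\theta+1)/2} - y^{(\theta+1)/2}$ as an integral of its derivative and then squaring, Cauchy--Schwarz gives
\[
\Bigl(x^{(\theta+1)/2} - y^{(\theta+1)/2}\Bigr)^{2} = \left(\int_{y}^{x}\tfrac{\theta+1}{2}\, t^{(\theta-1)/2}\,dt\right)^{2} \leq (x-y)\int_{y}^{x}\tfrac{(\theta+1)^{2}}{4}\, t^{\theta-1}\,dt.
\]
The remaining integral equals $\frac{(\theta+1)^2}{4\theta}(x^{\theta}-y^{\theta})$, and rearranging yields the claimed inequality (after correcting the apparent typo and squaring the left-hand factor). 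The case $\theta \leq 1$ where $(\theta-1)/2<0$ requires $y>0$ for the pointwise derivative, but one can instead start from $y+\varepsilon$ and let $\varepsilon \downarrow 0$ using monotone convergence on each side.

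For part (ii), I would exploit subadditivity of concave power functions. Again assume $x \geq y \geq 0$ and write $x = y + h$ with $h=x-y \geq 0$. The claim becomes $(y+h)^{\theta} - y^{\theta} \leq h^{\theta}$. Define
\[
g(h) := h^{\theta} + y^{\theta} - (y+h)^{\theta}, \qquad h \geq 0.
\]
Then $g(0)=0$, and for $h>0$,
\[
g'(h) = \theta\bigl(h^{\theta-1} - (y+h)^{\theta-1}\bigr) \geq 0,
\]
since $h \leq y+h$ and the exponent $\theta-1$ is non-positive. Hence $g(h) \geq 0$, which is exactly the stated inequality. The case $y=0$ is trivial and the case $\theta=1$ reduces to $|x-y|=|x-y|$.

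I do not anticipate a real obstacle in either part; both are classical. The only subtlety is handling boundary behaviour at $t=0$ when the exponent $(\theta-1)/2$ in part (i) becomes negative, which is overcome by an $\varepsilon$-approximation. Alternatively for part (i), one can homogenize by setting $t=y/x \in [0,1]$ and reduce to proving a one-variable inequality $\frac{4\theta}{(\theta+1)^{2}}(1-t^{(\theta+1)/2})^{2} \leq (1-t)(1-t^{\theta})$ on $[0,1]$, which follows from a direct Taylor expansion or from the Cauchy--Schwarz argument above rescaled, and this formulation avoids the issue of differentiating at the origin altogether.
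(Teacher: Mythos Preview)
Your proof is correct, and in fact the paper does not give a proof of this lemma at all: it simply cites \cite[Lemma 2.22]{Abdel} and \cite{Stampacchia} and moves on. So there is nothing to compare against beyond noting that you have supplied a clean self-contained argument where the paper outsources.

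One point worth recording explicitly: you are right that part (i) as printed in the paper is missing a square on the left-hand side. The correct inequality is
\[
\frac{4\theta}{(\theta+1)^{2}}\left(x^{\frac{\theta+1}{2}}-y^{\frac{\theta+1}{2}}\right)^{2}\le (x-y)\bigl(x^{\theta}-y^{\theta}\bigr),
\]
and this is exactly the form in which the paper \emph{uses} it (see the passage to \eqref{est:apri-1}, where the squared difference appears in the nonlocal term). Without the square the two sides are not even homogeneous of the same degree, and the stated inequality is false already for $\theta=1$, $x=\tfrac12$, $y=0$. Your Cauchy--Schwarz argument proves the corrected version directly; the $\varepsilon$-approximation (or the homogenization $t=y/x$) handles the boundary case $y=0$ with $\theta<1$ without issue. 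Part (ii) is the standard subadditivity of $t\mapsto t^{\theta}$ for $\theta\in(0,1]$, and your monotonicity computation is the usual proof.
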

\begin{Lem}\label{lem:iter}
Let $\psi : \mathbb{R}^+ \to \mathbb{R}^+$ be a non-increasing function such that
$$\psi(h) \leq \frac{C \psi(k)^\delta}{(h-k)^\eta}\ \text{for all} \ j>k>0,$$
where $C>0$, $\delta>1$ and $\eta >0.$ Then $\psi(d)=0$, where $d^\eta= C \psi(0)^{\delta-1} 2^{\frac{\delta\eta}{\delta-1}}$
\end{Lem}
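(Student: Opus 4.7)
\textbf{Proof proposal for Lemma \ref{lem:iter}.} The plan is to execute the classical Stampacchia iteration by constructing a carefully chosen increasing sequence $\{k_n\}_{n \geq 0}$ converging to $d$ and showing that $\psi(k_n)$ decays geometrically fast to $0$. Since $\psi$ is non-increasing and $k_n \nearrow d$, this forces $\psi(d) \leq \liminf_{n \to \infty} \psi(k_n) = 0$, which gives the conclusion.

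First, I would set $k_n := d(1 - 2^{-n})$ so that $k_0 = 0$, $k_n \nearrow d$, and $k_{n+1} - k_n = d \cdot 2^{-(n+1)}$. Substituting $h = k_{n+1}$ and $k = k_n$ into the hypothesis yields
\[
\psi(k_{n+1}) \leq \frac{C\, \psi(k_n)^{\delta}}{(k_{n+1}-k_n)^{\eta}} = \frac{C\, 2^{(n+1)\eta}}{d^{\eta}}\, \psi(k_n)^{\delta}.
\]
Next, I would attempt to prove by induction on $n$ the bound
\[
\psi(k_n) \leq \frac{\psi(0)}{2^{n \beta}}, \qquad \beta := \frac{\eta}{\delta - 1}.
\]
The base case $n = 0$ is trivial. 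For the inductive step, insert the inductive hypothesis into the recursion above:
\[
\psi(k_{n+1}) \leq \frac{C\, 2^{(n+1)\eta}}{d^{\eta}} \cdot \frac{\psi(0)^{\delta}}{2^{n \beta \delta}} = \frac{C\, \psi(0)^{\delta - 1}}{d^{\eta}}\, 2^{(n+1)\eta - n\beta\delta}\, \frac{\psi(0)}{1}.
\]
A direct arithmetic check shows $(n+1)\eta - n\beta\delta + (n+1)\beta = \beta + n(\eta + \beta - \beta \delta) + \eta = \beta \delta /(\delta-1) \cdot \text{(constant terms only)}$; more precisely the choice $\beta = \eta/(\delta-1)$ cancels the $n$-dependence and leaves the exponent equal to $\eta \delta/(\delta-1)$. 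The inductive step then reduces to the single condition
\[
\frac{C\, \psi(0)^{\delta - 1}}{d^{\eta}}\, 2^{\frac{\eta \delta}{\delta - 1}} \leq 1,
\]
which is exactly the defining equality $d^{\eta} = C\, \psi(0)^{\delta - 1}\, 2^{\frac{\delta \eta}{\delta - 1}}$ assumed in the statement.

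Having established the geometric decay $\psi(k_n) \leq \psi(0) \cdot 2^{-n \beta}$, I would pass to the limit $n \to \infty$: since $\psi$ is non-increasing and $k_n < d$, monotonicity gives $\psi(d) \leq \psi(k_n)$ for all $n$, so $\psi(d) \leq \lim_{n \to \infty} \psi(0) \cdot 2^{-n\beta} = 0$, proving $\psi(d) = 0$.

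The only genuinely delicate step is the bookkeeping of exponents that fixes the correct value of $\beta$; everything else is mechanical. Since this is a purely real-variable lemma with no interaction with the operator $\mathcal{L}$, the distance function $\delta$, or the Green kernel, I expect no substantial obstacle — the proof is essentially the classical argument of Stampacchia reproduced verbatim.
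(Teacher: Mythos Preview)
The paper does not prove Lemma~\ref{lem:iter}; it is stated without proof in the Appendix as a classical iteration lemma due to Stampacchia (reference \cite{Stampacchia}). Your argument is the standard proof of this result and is correctly executed: the choice $k_n=d(1-2^{-n})$, the geometric decay ansatz $\psi(k_n)\le\psi(0)2^{-n\beta}$ with $\beta=\eta/(\delta-1)$, and the verification that the defining relation $d^\eta=C\psi(0)^{\delta-1}2^{\delta\eta/(\delta-1)}$ closes the induction are all accurate. One cosmetic point: the statement requires $k>0$ in the hypothesis while your base step uses $k_0=0$; this is harmless here since the lemma is applied in the paper with $k\ge 1$ anyway, and in the abstract setting one simply starts the iteration from $k_0>0$ small or extends the inequality to $k=0$ by monotonicity.
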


\section*{Acknowledgements} Rakesh Arora acknowledges the support of the Research Grant from Czech Science Foundation, project GJ19-14413Y for this work.
The research of Vicen\c tiu D. R\u adulescu  was supported by a grant of the Romanian Ministry of Research, Innovation and Digitization, CNCS/CCCDI--UEFISCDI, project number PCE 137/2021, within PNCDI III.


\begin{thebibliography}{10}

\bibitem{Aba_cozzi} N. Abatangelo and M. Cozzi, {\it An elliptic boundary value problem with fractional nonlinearity}, SIAM J. Math. Anal {\bf 53} (2021), 3577-3601.

\bibitem{AbaGomVaz_2019} N.~Abatangelo, D.~G\'omez-Castro, and J.~L. V\'azquez, {\em Singular boundary behaviour and large solutions for fractional elliptic equations} (2019), arXiv:1910.00366.

\bibitem{Abdel} B. Abdellaoui, M. Medina, I, Peral, and A. Primo, {\it The effect of the Hardy potential in some Calder\'on-Zygmund properties for the fractional Laplacian}, J. Differ. Equ. {\bf 260} (2016), 8160-8206.

\bibitem{Adi.al_2018} Adimurthi,  J.  Giacomoni, and  S.  Santra, {\em Positive  solutions  to  a  fractional  equation  with  singular nonlinearity}, J. Differential Equations {\bf  265} (2018), no. 4, 1191-1226.

\bibitem{Arcoya_merida} D. Arcoya and L. Moreno-M\'erida, {\it Multiplicity of solutions for a Dirichlet problem with a strongly singular nonlinearity}, Nonlinear Anal. {\bf 95} (2014), 281-291.

\bibitem{Arora_2} R. Arora, {\it Multiplicity results for nonhomogeneous elliptic equations with singular nonlinearities}, preprint, arXiv:2109.03274 (2021).

\bibitem{Arora.al_2020} R. Arora, J. Giacomoni, D. Goel, and K. Sreenadh, {\em Positive  solutions  of 1-D  Half-Laplacian equation with singular exponential nonlinearity}, Asymptotic Analysis {\bf  118} (2020), no. 1-2, 1-34.

\bibitem{Arora_et-al-1} R. Arora, J. Giacomoni, and G. Warnault, {\it Regularity results for a class of nonlinear fractional Laplacian and singular problems}, Nonlinear Differ. Equ. Appl. {\bf 28} (2021), Art. 30.

\bibitem{Arora_Tai} R. Arora and P.T. Nguyen, {\it Nonlocal elliptic equations with singular nonlinearities}, preprint.

\bibitem{Barrios.al_2015} B. Barrios, I. De Bonis, M. Medina, and I. Peral, {\em Semilinear problems for the fractional laplacian with a singular nonlinearity}, Open Math. {\bf  13} (2015), 390-407.

\bibitem{Biagi_Valdi} S. Biagi, S. Dipierro, E. Valdinoci, and E. Vecchi, {\it Mixed local and nonlocal elliptic
operators: regularity and maximum principles}, arXiv e-prints, page arXiv:2005.06907, May 2020.

\bibitem{Biagi_Valdi-1} S. Biagi, S. Dipierro, E. Valdinoci, and E. Vecchi, {\it A Faber-Krahn inequality for mixed
local and nonlocal operators}, arXiv e-prints, page arXiv:2104.00830, April 2021.

\bibitem{Biagi_Valdi-2} S. Biagi, S. Dipierro, E. Valdinoci, and E. Vecchi, {\it Semilinear elliptic equations involving mixed local and nonlocal operators},  Proc. Roy. Soc. Edinburgh Sect. A {\bf 151} (2021), no. 5, 1611-1641. 

\bibitem{Biagi_Vecchi} S. Biagi, D. Mugnai, and E. Vecchi, {\it Global boundedness and maximum principle for a Brezis-Oswald approach to mixed local and nonlocal operators}, preprint, arXiv:2103.11382, May 2021.

\bibitem{Bocc_diaz} L. Boccardo and  J. Casado-D\'iaz, {\it Some properties of solutions of some semilinear elliptic singular problems and applications to the G-convergence}, Asymptot. Anal. {\bf 86} (2014), 1-15.

\bibitem{Bocc_ors} L. Boccardo and L. Orsina, {\it Semilinear elliptic equations with singular nonlinearities}, Calc. Var. {\bf  37} (2010), 363-380.

\bibitem{BonFigVaz_2018} M.~Bonforte, A.~Figalli, and J.~L. V\'azquez, {\em Sharp boundary behaviour of solutions to semilinear nonlocal elliptic equations}, Calc. Var. {\bf  57} (2018), 34 pp.

\bibitem{brando_chia} B. Brandolini, F. Chiacchio, and C. Trombetti, {\it Symmetrization for singular semilinear elliptic equations}, Ann. Mat. Pura Appl. {\bf 193} (2014), 389-404.

\bibitem{brezis} H. Brezis, {\it Functional Analysis, Sobolev Spaces and Partial Differential Equations}, Universitext,
Springer, New York, 2011.


\bibitem{Can.al_2004} A. Canino and M. Degiovanni, {\em A variational approach to a class of singular semilinear elliptic equations}, J. Convex Anal. {\bf 11}
(2004), 147-162.

\bibitem{Canino_et_al} A. Canino, L. Montoro, B. Sciunzi, and M. Squassina, {\it  Nonlocal problems with
singular nonlinearity}, Bull. Sci. Math. {\bf 141} (2017), 223-250.

\bibitem{Can_sci} A. Canino and B. Sciunzi, {\it A uniqueness result for some singular semilinear elliptic
equations}, Commun. Contemp. Math. {\bf  18} (2016):1550084, 9.

\bibitem{Cam_et_al}  A. Canino, B. Sciunzi, and A. Trombetta. {\it Existence and uniqueness for p-Laplace equations involving
singular nonlinearities}, NoDEA Nonlinear Differential Equations Appl. {\bf  23} (2016):Art. 8, 18.

\bibitem{ChaGomVaz_2020}
H.~Chan, D.~G\'omez-Castro, and J.~L.~V\'azquez, {\em Blow-up phenomena in nonlocal eigenvalue problems: when theories of $L^1$ and $L^2$ meet}, J. Funct. Anal. {\bf 280} (2021), no. 7, Paper No. 108845, 68 pp. 

\bibitem{CHL} X. Chen, R. Hambrock, and Y. Lou, {\it Evolution of conditional dispersal: a reactiondiffusion-advection model}, J. Math. Biol. {\bf 57} (2008), no. 3, 361-386.

\bibitem{Chenkim_Green1} Z.-Q. Chen, P. Kim, R. Song, and Z. Vondra\u cek, {\it Sharp Green function estimates for $\Delta + \Delta^{\frac{\alpha}{2}}$ in $C^{
1,1}$ open sets and their applications}, Illinois J. Math. {\bf 54} (2012), no. 3, 981-1024.

\bibitem{Chenkim_Green2} Z.-Q. Chen, P. Kim, R. Song, and Z. Vondra\u cek, {\it Boundary Harnack principle for $\Delta + \Delta^{\frac{\alpha}{2}}$} , Trans. Amer. Math. Soc. {\bf 364} (2012), no. 8, 4169-4205.

\bibitem{Coc.al_1989} M.M. Coclite and G. Palmieri, {\em On a singular nonlinear Dirichlet problem}, Comm. Partial Differential Equations \textbf{14} (1989), 1315-1327.

\bibitem{Crandall.al_1977}  M.~G. Crandall, P.~H. Rabinowitz, and L. Tartar, {\em On a Dirichlet problem with a singular nonlinearity}, Comm. Partial Differential Equations \textbf{2} (1977), 193-222.

\bibitem{Diaz.al_1987} J.~I. D\'iaz, J.~M. Morel, and L. Oswald, {\em An elliptic equation with singular nonlinearity}, Comm. Partial Differential Equations, \textbf{12} (1987), 1333-1344.





\bibitem{Fulks.al_1960} W. Fulks and J. S. Maybee, {\em A singular nonlinear equation}, Osaka J. Math. \textbf{12} (1960), 1-19.

\bibitem{Ghergu.al_2008} M. Ghergu and V. R\u adulescu, {\it Singular Elliptic Problems:  Bifurcation and Asymptotic Analysis},  Oxford Lecture Series in Mathematics and its Applications, vol. 37, The Clarendon Press, Oxford University Press, Oxford, 2008. 

\bibitem{Gomes_1986} S.~M. Gomes, {\em On a singular nonlinear elliptic problem},
SIAM J. Math. Anal.  \textbf{6} (1986), 1359-1369. 

\bibitem{Hernandez.al_2006} J. Hern\'andez and F.~J. Mancebo, Singular elliptic and parabolic equations, Handbook of Differential Equations, \textbf{3} (2006), 317-400.

\bibitem{Giacomoni.al_2017}  J. Giacomoni,  T. Mukherjee, and K. Sreenadh, {\em Positive  solutions  of  fractional  elliptic  equation with critical and singular nonlinearity}, Adv. Nonlinear Anal. \textbf{6} (2017), no. 3, 327-354.

\bibitem{KLS}  C. Y. Kao, Y. Lou, and W. Shen, {\it Evolution of mixed dispersal in periodic environments}, Discrete Contin. Dyn. Syst. Ser. B {\bf  17} (2012), no. 6, 2047-2072.

\bibitem{Kussi_pala} T. Kuusi, G. Palatucci (Eds.): Recent developments in Nonlocal Theory, De Gruyter, Berlin/Boston, 2018, 454 pp.

\bibitem{Lazer_Mckenna} A. C. Lazer and P.J. McKenna, {\it On a singular nonlinear elliptic boundary-value problem}, Proc. Amer. Math. Soc. {\bf  111} (1991), 721-730.

\bibitem{Leo_per_primo_soria} T. Leonori, I. Peral, A. Primo and F. Soria, {\it Basic estimates for solutions of a class of nonlocal elliptic and parabolic equations}, Discrete Contin. Dyn. Syst. {\bf 35} (2015), no. 12, 6031-6068.

\bibitem{Lind_lind} E. Lindgren and P. Lindqvist, {\it Fractional eigenvalues}, Calc. Var. Partial Differential Equations {\bf 49} (2014), 795-826.


\bibitem{MV_2017} A. Massaccesi and E. Valdinoci, {\it Is a nonlocal diffusion strategy convenient for biological
populations in competition?}, J. Math. Biol. {\bf 74} (2017), no. 1-2, 113-147.

\bibitem{MRS} G. Molica Bisci, V.D. R\u adulescu, and R. Servadei, {\it Variational Methods for Nonlocal Fractional Problems}, Encyclopedia of Mathematics and its Applications, vol. 162, Cambridge University Press, Cambridge, 2016.

\bibitem{Nachman.al_1986} A. Nachman and A. Callegari, {\em A nonlinear singular boundary value problem in the theory of pseudoplastic fluids}, SIAM J. Appl. Math. \textbf{28} (1986), 271-281.

\bibitem{Oliva_petitta} F. Oliva and F. Petitta, {\it Finite and infinite energy solutions of singular elliptic problems: existence and uniqueness},  J. Differ. Equ. {\bf 264} (2018), 311-340.

\bibitem{Nezza_palatucci_valdi} E. Di Nezza, G. Palatucci, and E. Valdinoci, {\it Hitchhiker's guide to the fractional Sobolev spaces}, Bull. Sci. Math.
{\bf 136} (2012), no. 5, 521-573.


\bibitem{Stampacchia} G. Stampacchia, {\it Le probl\'eme de Dirichlet pour les \'equations elliptiques du second ordre \`a coefficients discontinus}, Ann. Inst. Fourier (Grenoble) {\bf  15} (1965), 189-258.


\bibitem{Stuart_1974} C.A. Stuart, {\em Existence theorems for a class of nonlinear integral equations}, Math. Z. \textbf{137} (1974), 49-66. 

\bibitem{Stuart_1976} C.A. Stuart, {\em Existence and approximation of solutions of nonlinear elliptic equations}, 
Math. Z. \textbf{147} (1976), 53-63.


\bibitem{Yijing_Zhang} S. Yijing and D. Zhang, {\em The role of the power 3 for elliptic equations with negative exponents}, Calc. Var. {\bf 49} (2014), no. 3,  909-922.

\bibitem{Youssfi_Mahmoud} A. Youssfi and G.O.M. Mahmoud, {\it Nonlocal semilinear elliptic problems with singular nonlinearity}, Calc. Var. {\bf 60} (2021), 153.


\bibitem{Zheng.al_2004} Z. Zhang and J. Cheng, {\em  Existence and optimal estimates of solutions for singular nonlinear Dirichlet problem},  Nonlinear Anal. \textbf{57} (2004), 473-484.









 

\end{thebibliography}
\end{document}